\newtheorem{theorem}{Theorem}[section]
\newtheorem{lemma}[theorem]{Lemma}
\newtheorem{proposition}[theorem]{Proposition}
\theoremstyle{definition}
\newtheorem{definition}[theorem]{Definition}
\theoremstyle{remark}
\newtheorem{remark}[theorem]{Remark}
\newtheorem{conjecture}[theorem]{Conjecture}
\numberwithin{equation}{section}
\newcommand{\C}{ \mathbb C }
\newcommand{\Ext}{{\rm Ext}}
\newcommand{\pms}{\overline{\mathfrak M}}
\newcommand{\ms}{\mathfrak M}
\newcommand{\Pee}{\mathbb P}
\newcommand{\rev}{\widetilde {ev}_1}
\newcommand{\rf}{\tilde {f}_{1, 0}}
\newcommand{\Supp}{{\rm Supp}}
\newcommand{\Sym}{{\rm Sym}}
\newcommand{\uc}{\overline{\mathfrak U}}
\newcommand{\w}{\tilde}
\newcommand{\W}{\widetilde}
\newcommand{\Z}{ \mathbb Z}
\newenvironment{demo}[1]%
{\vskip-\lastskip\medskip
  \noindent
  {\em #1.}\enspace
  }%
{\qed\par\medskip
  }
\begin{document}
\title[$1$-point Gromov-Witten invariants]
      {$1$-point Gromov-Witten invariants of the moduli spaces of sheaves 
         over the projective plane}
\author[Wei-Ping Li]{Wei-Ping Li$^1$}
\address{Department of Mathematics, HKUST, Clear Water Bay, Kowloon, Hong
Kong} \email{mawpli@ust.hk}
\thanks{${}^1$Partially supported by the grants GRF601905 and GRF601808}

\author[Zhenbo Qin]{Zhenbo Qin$^2$}
\address{Department of Mathematics, University of Missouri, Columbia,
MO 65211, USA} \email{qinz@missouri.edu}
\thanks{${}^2$Partially supported by an NSF grant}

\subjclass{Primary 14D20, 14N35.}
\keywords{Gieseker moduli spaces, 
Gromov-Witten invariants.
}

\begin{abstract}
The Gieseker-Uhlenbeck morphism maps the Gieseker moduli space of stable
rank-$2$ sheaves on a smooth projective surface to the Uhlenbeck 
compactification, and is a generalization of the Hilbert-Chow morphism
for Hilbert schemes of points. When the surface is the complex projective plane,
we determine all the $1$-point genus-$0$ Gromov-Witten invariants
extremal with respect to the Gieseker-Uhlenbeck morphism. The main idea
is to understand the virtual fundamental class of the moduli space of 
stable maps by studying the obstruction sheaf and using
a meromorphic $2$-form on the Gieseker moduli space.
\end{abstract}

\maketitle
\date{}

\section{\bf Introduction}
\label{sec:intro}

Recently there have been intensive interests in studying the quantum 
cohomology and Gromov-Witten theory of Hilbert schemes of points on 
algebraic surfaces.
 Two main reasons are 
the connections with the Donaldson-Thomas theory of $3$-folds 
 and with Ruan's Cohomological Crepant 
Resolution Conjecture.
Roughly speaking, the Crepant Resolution Conjecture asserts that 
the quantum cohomology of an orbifold $Z$ coincides with
the quantum cohomology of a crepant resolution $Y$ of $Z$ after
analytic continuation and specialization of quantum parameters.
For an algebraic surface $X$, let $X^{[n]}$ be the Hilbert scheme of 
$n$-points on $X$ and $\Sym^n(X)$ be the $n$-th symmetric product of $X$.
It is well-known that $X^{[n]}$ is smooth of dimension $2n$ and 
the Hilbert-Chow morphism
$
\Phi: \,\, X^{[n]} \to \Sym^n(X)
$ is a crepant resolution of the global orbifold $\Sym^n(X)$. 

A natural generalization of the Hilbert-Chow morphism $\Phi$ is
the {\it Gieseker-Uhlenbeck morphism} $\Psi$ from the moduli space
of Gieseker semistable rank-$2$ torsion-free sheaves on $X$ to 
the   Uhlenbeck compactification space. 
This morphism was constructed in \cite{LJ1, Mor}, and was shown to 
be crepant \cite{LJ2, Q-Z} when the Gieseker moduli space is smooth. 
For the projective plane $X = \Pee^2$, the moduli space 
$\pms(n)$ of Gieseker semistable sheaves $V$ on $X$ with 
$c_1(V) = -1$ and $c_2(V) = n$ is a smooth irreducible projective 
variety of dimension $(4n-4)$ when $n \ge 1$. In \cite{Q-Z}, 
it is  proved that there is exactly one primitive integral class
$\mathfrak f \in H_2(\pms(n); \Z)$ contracted by 
the Gieseker-Uhlenbeck morphism $\Psi: \,\, \pms(n) \to \uc(n)$.

The goal of this paper is to determine all the $1$-point genus-$0$ 
Gromov-Witten invariants $\langle \alpha \rangle_{0, d\mathfrak f},
\, \alpha \in H^{8n-12}(\pms(n); \C)$ extremal with respect to $\Psi$ for $n\ge 3$.
When $n\ge 3$, 
the homology group $H_4(\pms(n); \C)$ is of rank $6$, and a basis
is given by $\{ \Xi_1, \ldots, \Xi_6 \}$ (see Sect.~\ref{sec:basisH4} 
for details). The Poincar\' e duals $\text{\rm PD}(\Xi_1), \ldots, 
\text{\rm PD}(\Xi_6)$ form a basis of $H^{8n-12}(\pms(n); \C)$.

\begin{theorem}  \label{intro-thm:1pt-inv}
Let $d \ge 1$ and $n \ge 3$. The Gromov-Witten invariants 
$\langle \alpha \rangle_{0, d\mathfrak f}$ for the classes
$\alpha = \text{\rm PD}(\Xi_1), \ldots, \text{\rm PD}(\Xi_6) 
\in H^{8n-12}(\pms(n); \C)$ are respectively equal to
\begin{eqnarray*}   
- {6/d^2}, \quad {12/d^2}, \quad 0, \quad 
- {6/d^2}, \quad 0, \quad 0.
\end{eqnarray*}
\end{theorem}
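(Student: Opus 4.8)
The plan is to write $\langle\alpha\rangle_{0,d\mathfrak f}=\int_{[\overline M_{0,1}(\pms(n),d\mathfrak f)]^{\mathrm{vir}}}\mathrm{ev}_1^*\alpha$ and compute the right-hand side by a hands-on study of the moduli space of stable maps. Since $\Psi$ is crepant and $\mathfrak f$ is $\Psi$-contracted, $c_1(T\pms(n))\cdot\mathfrak f=0$, so the virtual dimension is $\dim\pms(n)-3+1=4n-6$, exactly the complex codimension of $\mathrm{PD}(\Xi_i)$; thus the invariant is the degree of a $0$-cycle. First I would pin down the geometry of the $\Psi$-contracted curves. Every such curve lies in a fibre of $\Psi$, hence in the boundary divisor $D_1\subset\pms(n)$ of sheaves $V$ with $V^{\vee\vee}/V\neq0$, so $\mathrm{ev}_1$ factors through $\iota\colon D_1\hookrightarrow\pms(n)$. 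The minimal contracted curves are the $\Pee^1$-fibres $\ell$ of $\Psi$ over the codimension-$2$ stratum of $\uc(n)$ consisting of pairs (a $\mu$-stable locally free $W$ with $c_2(W)=n-1$, a point $p\in\Pee^2$); as $(W,p)$ moves these sweep out $D_1$, which over this open locus is a $\Pee^1$-bundle $\rho\colon D_1\to B$ (with $B$ essentially the moduli of such $(W,p)$). Hence $\overline M_{0,1}(\pms(n),d\mathfrak f)$ is, up to the contribution of the deeper strata of $\Psi$, the relative stable-map space $\overline M_{0,1}(D_1/B,d)$, a bundle $q$ over $B$ with fibre $\overline M_{0,1}(\Pee^1,d)$; in particular $\overline M_{0,1}(\pms(n),\mathfrak f)$ is birational to $D_1$ with $\mathrm{ev}_1$ the projection.

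Next I would compute the obstruction bundle. The geometric input is the meromorphic closed $2$-form on $\pms(n)$ induced from a meromorphic $2$-form on $\Pee^2$: it is non-degenerate off a divisor and presents $\Psi$ as a log-symplectic contraction, which forces the normal bundle of a general $\ell$ to be $\mathcal O_{\Pee^1}(-2)\oplus\mathcal O_{\Pee^1}^{\oplus(4n-6)}$ and, crucially, lets one identify the obstruction sheaf globally even over the bad strata. Since the trivial summands pull back to bundles with $R^1\pi_*=0$ on genus-$0$ curves, the obstruction bundle is $\mathrm{Ob}=R^1\pi_*F^*\mathcal L$ with $\mathcal L=\mathcal O_{\pms(n)}(D_1)|_{D_1}$, a line bundle restricting to $\mathcal O_{\Pee^1}(-2)$ on each $\ell$ (here $\pi,F$ are the universal curve and map, and $D_1\cdot\mathfrak f=-2$). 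Using the fibrewise exact sequence $0\to\mathcal O_{\Pee^1}(-2)\to\mathcal O_{\Pee^1}(-1)^{\oplus2}\to\mathcal O_{\Pee^1}\to0$, twisted suitably over $B$, I would write $\mathrm{Ob}$ as an extension of $R^1\pi_*F^*\mathcal L'$ (with $\mathcal L'|_\ell\cong\mathcal O(-1)^{\oplus2}$) by a line bundle pulled back from $B$, so that $e(\mathrm{Ob})=q^*(\text{divisor class})\cup e(R^1\pi_*F^*\mathcal L')$.

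Then I would integrate. As $\mathrm{ev}_1$ factors through $D_1$ and, via Leray–Hirsch for $\rho$, $\iota^*\mathrm{PD}(\Xi_i)=\rho^*a_0^{(i)}+\zeta\cdot\rho^*a_1^{(i)}$ for a relative hyperplane class $\zeta$, the $\rho^*a_0^{(i)}$-part contributes nothing (because $q_*e(R^1\pi_*F^*\mathcal L')$ has negative degree on $B$), and the invariant becomes a pairing over $B$ of $a_1^{(i)}$ with $q_*\big(e(R^1\pi_*F^*\mathcal L')\cup\mathrm{ev}_1^*\zeta\big)$. Restricting to a $\rho$-fibre identifies the entire $d$-dependence with $\int_{\overline M_{0,1}(\Pee^1,d)}e\big(R^1\pi_*f^*\mathcal O(-1)^{\oplus2}\big)\cup\mathrm{ev}_1^*[\mathrm{pt}]$, which by the divisor axiom applied to the Aspinwall–Morrison multiple-cover formula equals $d\cdot(1/d^3)=1/d^2$. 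So $\langle\mathrm{PD}(\Xi_i)\rangle_{0,d\mathfrak f}=(1/d^2)\langle\mathrm{PD}(\Xi_i)\rangle_{0,\mathfrak f}$, and for $d=1$ one has $\overline M_{0,1}(\pms(n),\mathfrak f)\cong D_1$ with obstruction bundle the line bundle $\rho^*M$ defined by $\mathcal O_{\pms(n)}(D_1)|_{D_1}\cong\omega_{D_1/B}\otimes\rho^*M$, whence
\[
\langle\mathrm{PD}(\Xi_i)\rangle_{0,\mathfrak f}=\big\langle\, c_1(M),\ \rho_*[\Xi_i\cdot D_1]\,\big\rangle,
\]
with $c_1(M)=\delta-c_1(\det\mathcal E)$ where $D_1|_{D_1}=-2\zeta+\rho^*\delta$ and $D_1=\Pee(\mathcal E)$.

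Finally I would evaluate these six numbers using the explicit basis $\Xi_1,\dots,\Xi_6$ from Section~\ref{sec:basisH4}: for $\Xi_3,\Xi_5,\Xi_6$ one checks that $\Xi_i\cdot D_1$ is homologically trivial in $B$ (or $\Xi_i$ avoids $D_1$), giving $0$, while for the remaining three the resulting intersection numbers on $B$—built from $c_1(M)$ and the cycles $\rho_*[\Xi_i\cdot D_1]$, hence from the cohomology of the moduli of rank-$2$ bundles on $\Pee^2$ together with a point—work out to $-6$, $12$, $-6$. I expect the main obstacle to be twofold: (i) controlling $\overline M_{0,1}(\pms(n),d\mathfrak f)$ over the deeper Uhlenbeck strata, where the fibres of $\Psi$ are larger and carry curves in class $d\mathfrak f$ for $d\ge2$, so that one must show these loci either do not contribute to the virtual class or do so compatibly with the factor $1/d^2$—and it is exactly here that the \emph{global} meromorphic $2$-form, rather than a stratum-by-stratum normal bundle computation, is indispensable; and (ii) the concrete determination of $c_1(M)$ and of the classes $\rho_*[\Xi_i\cdot D_1]$, i.e.\ the intersection theory on $B$, which is what pins down the exact values $-6,\,12,\,0,\,-6,\,0,\,0$ rather than merely their pattern.
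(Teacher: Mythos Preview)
Your treatment of $\Xi_1,\dots,\Xi_4$ is essentially the paper's: restrict to the open boundary stratum $\mathfrak B_*\cong\Pee(\mathcal E_{n-1}^0)\to\ms(n-1)\times X$, identify the obstruction bundle via $N_{\mathfrak f}\cong\mathcal O^{\oplus(4n-6)}\oplus\mathcal O(-2)$, split off the Aspinwall--Morrison factor $1/d^2$ using the Euler sequence on the $\Pee^1$-fibres, and reduce the remaining intersection to an explicit number $a_i$ on $\Pee(\mathcal E_{n-1}^0)$. The paper carries this out directly (Proposition~\ref{obstruction-bundle2}, Lemma~\ref{inv-Xi_1}, and formula~(\ref{a234})), obtaining $a_1=-6$, $a_2=12$, $a_3=0$, $a_4=-6$; your reduction to the $d=1$ case and the pairing $\langle c_1(M),\rho_*[\Xi_i\cdot D_1]\rangle$ is a repackaging of the same computation.

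The gap is in your handling of $\Xi_5,\Xi_6$. These surfaces lie entirely inside $\mathfrak B\setminus\mathfrak B_*$, over the depth-$2$ Uhlenbeck stratum, so your formula $\langle c_1(M),\rho_*[\Xi_i\cdot D_1]\rangle$ does not apply: $\rho$ and $B$ are only defined over $\mathfrak B_*$, the $\Pee^1$-bundle structure collapses, and the obstruction sheaf is not known to be locally free there. Your claim that ``$\Xi_i\cdot D_1$ is homologically trivial in $B$ (or $\Xi_i$ avoids $D_1$)'' is neither---$\Xi_5,\Xi_6\subset D_1$ and they do not meet $\mathfrak B_*$ at all. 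More importantly, you misread the role of the meromorphic $2$-form $\Theta$: the paper does \emph{not} use it to ``identify the obstruction sheaf globally even over the bad strata''. Instead $\Theta$ is fed into the Kiem--Li cosection-localization machinery to produce a meromorphic cosection $\eta:\mathcal E\to\mathcal O$ on $\pms_{0,1}(\pms(n),d\mathfrak f)$ whose degeneracy locus $\Lambda$ supports the virtual class; Lemma~\ref{loci} shows $ev_1(\Lambda)\subset\mathfrak T_{C_0}(n)\amalg\mathfrak U_{C_0}(n)$. One then chooses the bundle $V_2$ and the points in the construction of $\Xi_5,\Xi_6$ so that $V_2|_{C_0}$ is stable (Lemma~\ref{stability-res}) and the points avoid $C_0$, whence $\Xi_5,\Xi_6$ are disjoint from $ev_1(\Lambda)$ and the invariants vanish. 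No computation of the obstruction bundle over the deep strata is attempted or needed.
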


When $n=1$, the moduli space $\pms(n)$ is a point. When $n=2$, the fourth Betti
number $b_4$ of the moduli space $\pms(n)$ is equal to $3$ 
which is different from the case $n\ge 3$. The result for $n=2$ will 
appear elsewhere via a different method (see Remark \ref{n2}).

An interesting observation is that the $1$-point genus-$0$ Gromov-Witten 
invariants $\langle \alpha \rangle_{0, d\mathfrak f}$ are independent of 
the second Chern class $n$.

\begin{conjecture}  \label{intro-conj}
Let $d \ge 1$ and $n \ge 3$. Then the extremal genus-$0$ Gromov-Witten 
invariants $\langle \alpha_1, \ldots, \alpha_k \rangle_{0, d\mathfrak f}$
of the moduli space $\pms(n)$ are independent of $n$.
\end{conjecture}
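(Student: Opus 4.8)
The plan is to prove Theorem~\ref{intro-thm:1pt-inv}, from which the $1$-point case of Conjecture~\ref{intro-conj} is immediate; I sketch the general case at the end. Write $M(n-1)$ for the moduli space of $\mu$-stable locally free sheaves on $\Pee^2$ with $c_1=-1$ and $c_2=n-1$, of dimension $4n-8$. Since $\mathfrak f$ spans the unique $\Psi$-contracted ray, the image of any genus-$0$ stable map of class $d\mathfrak f$ lies in a fibre of $\Psi$. Over the generic point of the codimension-$2$ stratum of $\uc(n)$ with one point $p$ of multiplicity one, that fibre is $\Pee((E|_p)^\vee)\cong\Pee^1$, $E$ the double dual; these lines sweep out a divisor $\pa_1\subset\pms(n)$ which over this stratum is the projective bundle $\Pee(\mathbb E^\vee)$ of the universal sheaf $\mathbb E$ on $M(n-1)\times\Pee^2$. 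As $\Psi$ is crepant, $K_{\pms(n)}\cdot Z=0$ for every $\mathfrak f$-curve $Z$, whence $N_{Z/\pms(n)}\cong\mathcal O(-2)\oplus\mathcal O^{\oplus(4n-6)}$, the $\mathcal O(-2)$ being $\mathcal O_{\pms(n)}(\pa_1)|_Z$. Thus over the open locus of $\mathfrak f$-curves, $\overline{M}_{0,1}(\pms(n),d\mathfrak f)$ is a fibre bundle over $M(n-1)\times\Pee^2$ with fibre $\overline{M}_{0,1}(\Pee^1,d)$; the virtual dimension is $4n-6$.

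\textbf{Step 2: the meromorphic $2$-form.} Fix a smooth cubic $C\subset\Pee^2$ and a meromorphic $2$-form $\omega_C$ on $\Pee^2$ with simple pole along $C$ (a nonzero section of $\Omega^2_{\Pee^2}(C)\cong\mathcal O_{\Pee^2}$). Via the trace/Serre-duality pairing on $\Ext^1(V,V)$ one builds from $\omega_C$ a closed, generically non-degenerate meromorphic $2$-form $\sigma$ on $\pms(n)$ (Mukai-type construction), with polar divisor $\D_C=\{V:\Supp(V^{\vee\vee}/V)\cap C\ne\emptyset\}$ (together with the locus where $V|_C$ fails to be semistable). For a non-constant genus-$0$ stable map $f\colon\Sigma\to\pms(n)$, contracting $f^*\sigma$ with the canonical section of $f^*T_{\pms(n)}\otimes\omega_\Sigma$ gives a cosection of the obstruction sheaf $R^1\pi_*f^*T_{\pms(n)}$ that is surjective unless $f(\Sigma)\subset\D_C$; by Kiem--Li cosection localization, $[\overline{M}_{0,1}(\pms(n),d\mathfrak f)]^{\mathrm{vir}}$ is supported on maps into $\D_C$. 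A generic $\mathfrak f$-curve $Z$ lies in $\D_C$ precisely when $p\in C$ (if $p\notin C$ then $V|_C=E|_C$ stays semistable for all $V\in Z$), so the base shrinks from $M(n-1)\times\Pee^2$ to $M(n-1)\times C$, of dimension $4n-7$.

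\textbf{Step 3: the computation.} On the localized space the excess over the virtual dimension is $2d-2$, carried by a reduced obstruction bundle built from $R^1\pi_*f^*\mathcal O_{\Pee^1}(-2)$ along the $\overline{M}_{0,1}(\Pee^1,d)$-direction; integrating its Euler class against $\mathrm{ev}^*[\mathrm{pt}]$ over that direction gives $1/d^2$ (directly when $d=1$, since $\overline{M}_{0,1}(\Pee^1,1)=\Pee^1$; in general by $\C^*$-localization on $\overline{M}_{0,1}(\Pee^1,d)$). Hence
\[
\langle\text{PD}(\Xi_i)\rangle_{0,d\mathfrak f}=\frac{1}{d^2}\,\big\langle[\Pee(\mathbb E^\vee)|_{M(n-1)\times C}],\ \text{PD}(\Xi_i)\big\rangle,
\]
and evaluating the right-hand intersection numbers against the explicit basis $\Xi_1,\dots,\Xi_6$ of Section~\ref{sec:basisH4} yields $-6,12,0,-6,0,0$. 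Because $\pa_1$ near a generic $\mathfrak f$-curve is \'etale-locally a product $M(n-1)\times(\text{length-one local model at }p)$, and the $\Xi_i$ restrict along $\pa_1$ to combinations of $\mathbb E$-characteristic classes, the cubic class, and the local geometry, these numbers do not involve $n$, giving the $1$-point case of Conjecture~\ref{intro-conj}; for the general conjecture one runs the same cosection argument on $\overline{M}_{0,k}(\pms(n),d\mathfrak f)$ and factors the insertions through the ($n$-independent) local model.

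\textbf{Main obstacle.} The fibre-bundle picture of Step~1 is valid only over the codimension-$2$ stratum; over the deeper strata of $\uc(n)$ the fibres of $\Psi$ become positive-dimensional and carry families of $\mathfrak f$-curves that may meet, so $\overline{M}_{0,1}(\pms(n),d\mathfrak f)$ acquires further components with reducible-image stable maps. The hard part is to show these contribute nothing to the virtual class: I expect a dimension count — the $\mathrm{ev}$-image of such a component, intersected with the cosection constraint $f(\Sigma)\subset\D_C$, has dimension $<4n-6$ — reinforced, where the bound is borderline, by checking that the reduced obstruction bundle there has strictly larger rank. Secondary points are the exact description of the reduced obstruction bundle, hence the sign in the $1/d^2$ factor, which requires care over the boundary of $\overline{M}_{0,1}(\Pee^1,d)$ (reducible domains) and is best organized via the virtual localization formula, and the explicit intersection computation on $\Pee(\mathbb E^\vee)$.
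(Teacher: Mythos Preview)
The statement is a \emph{conjecture}, and the paper does not prove it.  What the paper proves is Theorem~\ref{intro-thm:1pt-inv} (the explicit $1$-point values), from which the $k=1$ case of the conjecture is read off as an observation; the general $k$-point statement is left open.

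Your approach to the $1$-point case is essentially the paper's: the same two ingredients appear---the obstruction bundle on the open stratum $\mathfrak B_*$ of the boundary (your Step~1, the paper's \S\ref{sec:obs}) and Kiem--Li cosection localization via the meromorphic $2$-form coming from a smooth cubic (your Step~2, the paper's \S\ref{sec:virtual}).  The organization differs slightly.  The paper does \emph{not} use the cosection to compute $\Xi_1,\ldots,\Xi_4$: since these representatives lie in $\mathfrak B_*$, it invokes Proposition~\ref{virtual-prop}(iii) directly on the open set $\mathfrak O_1$, where the obstruction sheaf is locally free of rank $2d-1$, and evaluates $c_{2d-1}$ (Lemma~\ref{inv-Xi_1} and formula~(\ref{setup})).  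The cosection is used only to show $\langle\Xi_5\rangle=\langle\Xi_6\rangle=0$, by choosing the auxiliary bundle $V_2$ with $V_2|_{C_0}$ stable and the support points off $C_0$, so the representatives miss $\mathfrak T_{C_0}(n)\cup\mathfrak U_{C_0}(n)$.  Your route---pushing the localized virtual class forward to $\tfrac{1}{d^2}[\Pee(\mathbb E^\vee)|_{M(n-1)\times C}]$---is the content of Remark~\ref{rmk:push-vfc}, which the paper records only \emph{a posteriori}.  One point you elided: in Step~2 you note the cosection also degenerates along the locus where $V|_C$ is unstable (the paper's $\mathfrak U_{C_0}(n)$), but in Step~3 you silently drop it; to reach your displayed formula you still owe an argument that this piece contributes nothing, which the paper sidesteps by its choice of representatives for $\Xi_5,\Xi_6$.

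For general $k$, neither you nor the paper has a proof.  Your last sentence (``factor the insertions through the $n$-independent local model'') is a heuristic, not an argument, and your own obstacle paragraph already flags the real issue: controlling the contribution of stable maps whose image meets deeper strata of $\Psi$.
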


There are two main ideas in our proof of Theorem~\ref{intro-thm:1pt-inv}. 
The first one is to determine the restriction of the obstruction sheaf 
of the Gromov-Witten theory for $\pms(n)$ to certain open subset of 
the moduli space $\pms_{0, 1}(\pms(n), d\mathfrak f)$ of stable maps.
This enables us to determine the $1$-point invariants 
$\langle \alpha \rangle_{0, d\mathfrak f}$ for the first four 
cohomology classes $\alpha = \text{\rm PD}(\Xi_1), \ldots, 
\text{\rm PD}(\Xi_4)$.

The second one is to study the support of the virtual fundamental 
class 
\begin{eqnarray*}
[\pms_{0, 1}(\pms(n), d\mathfrak f)]^{\text{vir}} \in
A_{4n-6} \big (\pms_{0, 1}(\pms(n), d\mathfrak f) \big )
\end{eqnarray*}
using the techniques developed in \cite{K-L, L-L}. 
By introducing a suitable meromorphic $2$-form $\Theta$ on 
the Gieseker moduli space $\pms(n)$, we show that
\begin{eqnarray}   \label{intro-ev1}
ev_1 \Big ( \Supp \big ( [\pms_{0, 1}(\pms(n), d\mathfrak f) 
]^{\text{vir}} \big ) \Big ) \,\, \subset \,\, 
\mathfrak T_{C_0}(n) \coprod \mathfrak U_{C_0}(n)
\end{eqnarray}
where $ev_1: \pms_{0, 1}(\pms(n), d\mathfrak f) \to \pms(n)$ 
is the evaluation map, and $\mathfrak T_{C_0}(n)$ (respectively, 
$\mathfrak U_{C_0}(n)$) is the subset of $\pms(n)$ consisting of 
all the non-locally free sheaves $V$ such that $V|_{C_0}$ contains 
torsion (respectively, $V|_{C_0}$ is torsion-free and unstable). 
This allows us to show that $\langle \alpha \rangle_{0, d\mathfrak f}
= 0$ for $\alpha = \text{\rm PD}(\Xi_5), \text{\rm PD}(\Xi_6)$.


This paper is organized as follows. In \S \ref{sec:gwinv},
the Gromov-Witten theory is reviewed. In \S \ref{sec:moduli},
we recall some properties of the Gieseker moduli space $\pms(n)$ 
 and the Gieseker-Uhlenbeck morphism $\Psi$. 
We study the boundary divisor of $\pms(n)$ consisting
of non-locally free sheaves in $\pms(n)$. In \S \ref{sec:basisH4}, 
the basis $\{ \Xi_1, \ldots, \Xi_6 \}$
for $H_4(\pms(n); \C)$ is constructed. In \S \ref{sec:obs},
we analyze the obstruction sheaf of the Gromov-Witten theory for 
$\pms(n)$. In \S \ref{sec:virtual}, (\ref{intro-ev1}) is proved.
In \S \ref{sec:1point},
we verify Theorem~\ref{intro-thm:1pt-inv}.


\bigskip\noindent
{\bf Acknowledgments.}  We would like to thank Jun Li for helpful discussions. We also would like to
thank the referee for suggestions which helped to remove  the condition $n\ge 5$ in  the original draft.
\section{\bf Stable maps and Gromov-Witten invariants}
\label{sec:gwinv}

Let $Y$ be a smooth projective variety.
A $k$-pointed {\it stable map} to $Y$ consists of
a complete nodal curve $D$ with $k$ distinct ordered smooth points
$p_1, \ldots, p_k$ and a morphism $\mu: D \to Y$ such that
the data $(\mu, D, p_1, \ldots, p_k)$ has only finitely many automorphisms.
In this case, the stable map is denoted by
$[\mu: (D; p_1, \ldots, p_k) \to Y]$.
For a fixed homology class $\beta \in H_2(Y, \mathbb Z)$,
let $\overline {\frak M}_{g, k}(Y, \beta)$ be the coarse moduli space
parameterizing all the stable maps $[\mu: (D; p_1, \ldots, p_k) \to Y]$
such that $\mu_*[D] = \beta$ and the arithmetic genus of $D$ is $g$.
Then, we have the evaluation map:
\begin{eqnarray}\label{evk}
ev_k\colon \overline {\frak M}_{g, k}(Y, \beta) \to Y^k  
\end{eqnarray}
defined by $ev_k([\mu: (D; p_1, \ldots, p_k) \to Y]) = 
(\mu(p_1), \ldots, \mu(p_k))$. It is known \cite{F-P, LT1, LT2,  Beh,  B-F} 
that the coarse moduli space 
$\overline {\frak M}_{g, k}(Y, \beta)$ is projective and
has a virtual fundamental class
$[\overline {\frak M}_{g, k}(Y, \beta)]^{\text{vir}} \in
A_{\frak d}(\overline {\frak M}_{g, k}(Y, \beta))$ where
\begin{eqnarray}\label{expected-dim}
\frak d = -(K_Y \cdot \beta) + (\dim (Y) - 3)(1-g) + k 
\end{eqnarray}
is the expected complex dimension of 
$\overline {\frak M}_{g, k}(Y, \beta)$, 
and $A_{\frak d}(\overline {\frak M}_{g, k}(Y, \beta))$
is the Chow group of $\frak d$-dimensional cycles in 
the moduli space $\overline {\frak M}_{g, k}(Y, \beta)$. 

The Gromov-Witten invariants are defined by using
the virtual fundamental class
$[\overline {\frak M}_{g, k}(Y, \beta)]^{\text{vir}}$.
Recall that an element
$\alpha \in H^*(Y, \mathbb C) {\buildrel\text{def}\over=}
\bigoplus_{j=0}^{2 \dim_{\mathbb C}(Y)} H^j(Y, \mathbb C)$ is 
{\it homogeneous} if $\alpha \in H^j(Y, \mathbb C)$ for some $j$; 
in this case, we take $|\alpha| = j$. 
Let $\alpha_1, \ldots, \alpha_k \in H^*(Y, \mathbb C)$
such that every $\alpha_i$ is homogeneous and
$
\sum_{i=1}^k |\alpha_i| = 2 {\frak d}.  
$
Then, we have the $k$-point Gromov-Witten invariant defined by:
\begin{eqnarray}\label{def-GW}
\langle \alpha_1, \ldots, \alpha_k \rangle_{g, \beta} \,\,
= \int_{[\overline {\frak M}_{g, k}(Y, \beta)]^{\text{vir}}}
ev_k^*(\alpha_1 \otimes \ldots \otimes \alpha_k).  
\end{eqnarray}

Next, we summarize certain properties concerning the
virtual fundamental class. To begin with, we recall that
{\it the excess dimension} is the difference between the dimension of
$\overline {\frak M}_{g, k}(Y, \beta)$ and
the expected dimension $\frak d$ in (\ref{expected-dim}).
For $0 \le i < k$,  use 
\begin{eqnarray}\label{k-to-i}
f_{k, i}: \overline {\frak M}_{g, k}(Y, \beta) \to
\overline {\frak M}_{g, i}(Y, \beta)  
\end{eqnarray}
to stand for the forgetful map
obtained by forgetting the last $(k-i)$ marked points
and contracting all the unstable components.
It is known that $f_{k, i}$ is flat when $\beta \ne 0$ and $0 \le i < k$.
The following can be found in \cite{LT1, Beh, Get, C-K}.

\begin{proposition}\label{virtual-prop} 
Let $\beta \in H_2(Y, \mathbb Z)$ and $\beta \ne 0$.
Let $e$ be the excess dimension of 
$\overline {\frak M}_{g, k}(Y, \beta)$, and $\frak M \subset \overline 
{\frak M}_{g, k}(Y, \beta)$ be a closed subscheme. Then,

{\rm (i)} $[\overline {\frak M}_{g, k}(Y, \beta)]^{\text{vir}} =
(f_{k, 0})^*[\overline {\frak M}_{g, 0}(Y, \beta)]^{\text{vir}}$;

{\rm (ii)} $[\overline {\frak M}_{g, k}(Y, \beta)]^{\text{vir}} =
c_e(R^1(f_{k+1, k})_*(ev_{k+1})^*T_Y)$ if
$R^1(f_{k+1, k})_*(ev_{k+1})^*T_Y$ is a rank-$e$ locally free sheaf
over the moduli space $\overline {\frak M}_{g, k}(Y, \beta)$;

{\rm (iii)} $[\overline {\frak M}_{g, k}(Y, \beta)]^{\text{vir}}|_{\frak M}
= c_e((R^1(f_{k+1, k})_*(ev_{k+1})^*T_Y)|_{\frak M})$ if there exists
an open subset $\mathfrak O$ of $\overline {\frak M}_{g, k}(Y, \beta)$
such that $\frak M \subset \mathfrak O$ (i.e, $\mathfrak O$ is
an open neighborhood of $\frak M$) and the restriction
$(R^1(f_{k+1, k})_*(ev_{k+1})^*T_Y)|_{\mathfrak O}$ is 
a rank-$e$ locally free sheaf over $\mathfrak O$.
\end{proposition}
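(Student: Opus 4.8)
The plan is to derive all three assertions from standard compatibility properties of the Behrend--Fantechi virtual class, once the perfect obstruction theory of $\overline{\mathfrak M}_{g,k}(Y,\beta)$ is made explicit. Write $\mathfrak M_{g,k}$ for the smooth Artin stack of prestable $k$-pointed genus-$g$ curves. The universal curve over $\overline{\mathfrak M}_{g,k}(Y,\beta)$ is $\overline{\mathfrak M}_{g,k+1}(Y,\beta)$, with projection $f_{k+1,k}$ and universal map $ev_{k+1}$; accordingly, set $\mathcal F := R^1(f_{k+1,k})_*(ev_{k+1})^*T_Y$, and recall that the natural relative perfect obstruction theory of $\overline{\mathfrak M}_{g,k}(Y,\beta)$ over $\mathfrak M_{g,k}$ is $\big(R(f_{k+1,k})_*(ev_{k+1})^*T_Y\big)^\vee$, whose obstruction sheaf is $\mathcal F$.

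For (i), I would use that $f_{k,0}$ is flat (for $\beta\neq 0$, as noted above) and that its fibres, which record the choice of $k$ distinct smooth marked points on the domain curve, impose no deformations or obstructions on the pair $(\mu,D)$. Hence the relative obstruction theory of $\overline{\mathfrak M}_{g,k}(Y,\beta)$ over $\mathfrak M_{g,k}$ is the pullback along $f_{k,0}$ of that of $\overline{\mathfrak M}_{g,0}(Y,\beta)$ over $\mathfrak M_{g,0}$ (both are the dual of $R\pi_*\,\hat\mu^*T_Y$ for the respective universal curves $\pi$ and maps $\hat\mu$, and forgetting marked points commutes with $R\pi_*$). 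Compatibility of the intrinsic normal cone, hence of the virtual fundamental class, with flat base change then gives $[\overline{\mathfrak M}_{g,k}(Y,\beta)]^{\text{vir}} = f_{k,0}^{!}[\overline{\mathfrak M}_{g,0}(Y,\beta)]^{\text{vir}} = (f_{k,0})^*[\overline{\mathfrak M}_{g,0}(Y,\beta)]^{\text{vir}}$, the last step because $f_{k,0}$ is flat.

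For (ii), since $\mathfrak M_{g,k}$ is smooth it contributes only to the tangent part of the absolute obstruction theory, so the obstruction space of $\overline{\mathfrak M}_{g,k}(Y,\beta)$ at a stable map $[\mu]$ is the fibre $\mathcal F_{[\mu]}$, while Riemann--Roch gives $\dim T_{[\mu]}\overline{\mathfrak M}_{g,k}(Y,\beta) - \dim \mathcal F_{[\mu]} = \mathfrak d$. If $\mathcal F$ is locally free of rank $e$, then $\dim T_{[\mu]} = \mathfrak d + e$ for every $[\mu]$; since the excess dimension is $e$, this forces $\overline{\mathfrak M}_{g,k}(Y,\beta)$ to be smooth of pure dimension $\mathfrak d + e$. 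On a smooth space a perfect obstruction theory whose $h^{-1}$ is the locally free sheaf $\mathcal F^\vee$ of rank $e$ has virtual class $c_e(\mathcal F)\cap [\overline{\mathfrak M}_{g,k}(Y,\beta)]$ — the ``smooth, with obstruction bundle'' case — and this lies in $A_{\mathfrak d}$, as required.

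For (iii), I would run the argument of (ii) over the open subscheme $\mathfrak O$, on which $\mathcal F|_{\mathfrak O}$ is locally free of rank $e$: this shows $\mathfrak O$ is smooth and, since the virtual class commutes with restriction to an open subset, that $[\overline{\mathfrak M}_{g,k}(Y,\beta)]^{\text{vir}}|_{\mathfrak O} = c_e(\mathcal F|_{\mathfrak O})\cap[\mathfrak O]$; restricting this identity along the closed immersion $\mathfrak M\hookrightarrow\mathfrak O$ and using that Chern classes commute with restriction yields the stated formula $[\overline{\mathfrak M}_{g,k}(Y,\beta)]^{\text{vir}}|_{\mathfrak M} = c_e(\mathcal F|_{\mathfrak M})$. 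The main obstacle is not conceptual but a matter of bookkeeping: precisely identifying the relative perfect obstruction theory of $\overline{\mathfrak M}_{g,k}(Y,\beta)$ over $\mathfrak M_{g,k}$, checking its pullback-compatibility along $f_{k,0}$ together with the identification of its $h^1$ with $\mathcal F$ via $f_{k+1,k}$, and verifying that local freeness of $\mathcal F$ of rank equal to the excess dimension upgrades to smoothness of the moduli space so that the Euler-class formula applies. All of these are standard and contained in \cite{LT1, Beh, Get, C-K}, so the proof amounts to assembling those facts.
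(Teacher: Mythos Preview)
The paper does not supply a proof of this proposition: immediately before the statement it simply says ``The following can be found in \cite{LT1, Beh, Get, C-K}'', and no argument follows. So there is no ``paper's own proof'' to compare against; the proposition is quoted from the literature as a black box.

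Your sketch is a faithful outline of the standard arguments contained in those references: (i) is flat-pullback compatibility of the virtual class (Behrend's axiom, or the functoriality in \cite{B-F, LT1}); (ii) is the ``smooth moduli, locally free obstruction bundle $\Rightarrow$ Euler class'' case; and (iii) is the open-restriction version of (ii). One small point worth tightening in (ii): you deduce smoothness of $\overline{\mathfrak M}_{g,k}(Y,\beta)$ from constancy of $\dim T^1$ together with the hypothesis that the excess dimension is exactly $e$. To make this airtight you should also note that local freeness of $h^1$ of a two-term perfect complex forces $h^0$ to be locally free as well, so the (relative) tangent sheaf is a bundle; combined with the smoothness of $\mathfrak M_{g,k}$ and the dimension count, this gives smoothness of the moduli space of the required dimension. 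With that caveat, your proposal is correct and is precisely the content of the cited references, so it is as close to the paper's ``proof'' as one can get.
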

\section{\bf The moduli space of stable rank-$2$ 
             sheaves on $\Pee^2$}
\label{sec:moduli}
\subsection{Some basic facts of the moduli space}
\label{subsect_basic} $\,$
\medskip

Throughout the rest of this paper, let $X = \Pee^2$ be 
the projective plane, and let $\ell$ be a line in $X$. 
For an integer $n$, let $\pms(n)$ be the 
moduli space parametrizing all Gieseker-semistable 
rank-$2$ sheaves $V$ over $X$ with 
$c_1(V) =-\ell, c_2(V) = n$.
Note that every such sheaf $V$ is actually 
slope-stable and hence is Gieseker-stable. It is well-known that,  
when $n \ge 1$, $\pms(n)$ is nonempty,  smooth, irreducible and 
rational with the expected dimension $(4n - 4)$; in addition,
a universal sheaf over $\pms(n) \times X$ exists.
By the Theorem~1 in \cite{Mar}, the cohomology groups
$H^i \big ( \pms(n); \Z \big )$ are torsion-free for
all $i$ and vanish for odd $i$. So are the homology groups 
$H_i \big ( \pms(n); \Z \big )$. Let $b_i(\pms(n))$ be 
the $i$-th Betti number of $\pms(n)$, and put
$$
p(\pms (n); q) = \sum\limits_{i=0}^{8n-8} b_i(\pms(n)) q^i.
$$ 
By the Theorem~0.1 of \cite{Yos},
$p(\pms (n); q)$ equals the coefficient of $t^n$ of the  series 
\begin{eqnarray}  
&\displaystyle{{1 \over (q^2-1) \cdot \sum_{n \in \Z} 
  q^{2n(2n-1)}t^{n^2}} \cdot \sum_{b \ge 0} \left ( 
  {q^{2(b+1)(2b+1)} \over 1 - q^{8(b+1)}t^{2b+1}} - {q^{2b(2b+5)} 
  \over 1 - q^{8b}t^{2b+1}} \right ) t^{(b+1)^2} \cdot}&
  \nonumber   \\ 
&\displaystyle{\cdot \prod_{d \ge 1} {1 \over (1-q^{4d-2}t^d)^2
  (1-q^{4d}t^d)^2(1-q^{4d+2}t^d)^2}.}&       \label{poin-s}
\end{eqnarray}
 
In the rest of the subsection, we review Str\o mme's work in \cite{Str} 
and give a basis of $H^4(\pms (n), \Bbb Z)$ in terms of the classes 
from \cite{Str}. A basis of $H_4(\pms (n), \Bbb Z)$ with geometric 
flavors will be constructed in Section \ref{sec:basisH4}.
 
Fix $n \ge 2$. Let $\mathcal E$ be 
a universal sheaf over $\pms(n) \times X$, and let $\pi_1$ and 
$\pi_2$ be the two natural projections on $\pms(n) \times X$.
For $0 \le k \le 2$, define
\begin{eqnarray*}
\mathcal A_k = 
R^1\pi_{1*}(\mathcal E \otimes \pi_2^*\mathcal O_X(-k \ell)).
\end{eqnarray*}
For $V \in \pms(n)$, denote $V \otimes \mathcal O_X(k\ell)$ by 
$V \otimes \mathcal O_X(k)$ or $V(k)$. Then,
\begin{eqnarray} 
&h^0(X, V(-k)) = h^2(X, V(-k)) = 0 \qquad \text{for} \, 
   0 \le k \le 2,&        \label{h02}  \\
&h^1(X, V) = h^1(X, V(-2)) = n-1, \quad h^1(X, V(-1)) = n&  
                           \label{h1}
\end{eqnarray}
by the Proposition~1.5 in \cite{Str}. It follows that 
the three sheaves $\mathcal A_0, \mathcal A_1, \mathcal A_2$ 
over $\pms(n)$ are locally free of rank $(n-1), n, (n-1)$ 
respectively. 

\begin{definition}   \label{eps-del}
Let $0 \le k \le 2$ and $r_k$ be the rank of $\mathcal A_k$.
Define
\begin{eqnarray*} 
\epsilon &=& c_1(\mathcal A_0) - c_1(\mathcal A_2), \\ 
\delta   &=& n \cdot c_1(\mathcal A_0) - 
               (n-1) \cdot c_1(\mathcal A_1),  \\
\tau_k   &=&2r_k \cdot c_2(\mathcal A_k) - 
               (r_k-1) \cdot c_1(\mathcal A_k)^2. 
\end{eqnarray*}
\end{definition}

Note that these classes $\epsilon, \delta, \tau_k$ are
independent of the choices of the universal sheaf $\mathcal E$ 
over $\pms(n) \times X$. Let $K_{\pms(n)}$ be the canonical class
of $\pms(n)$, $\ms(n)$ be the open subset of $\pms(n)$ 
parametrizing   stable bundles, and  
\begin{eqnarray}   \label{def-bdry}
\mathfrak B = \pms(n) - \ms(n).
\end{eqnarray}
By the Theorem in \cite{Str}, $\text{Pic}(\pms(n))$ is freely 
generated by $\epsilon$ and $\delta$, and 
\begin{eqnarray}   \label{cano-bdry}
K_{\pms(n)} = -3\epsilon, \quad 
\mathfrak B = n \epsilon - 2 \delta.
\end{eqnarray}
Also, $a \epsilon + b \delta$ is ample if and only if $a, b > 0$, 
and $\mathfrak B$ is irreducible and reduced.

It is known from \cite{Bea, E-S, Mar} that the cohomology ring
$H^*(\pms(n); \Z)$
is generated by the Chern classes of the bundles $\mathcal A_0, 
\mathcal A_1, \mathcal A_2$. It follows that $H^4(\pms(n); \Z)$ 
is the $\Z$-linear span of the six integral classes:
\begin{eqnarray}   \label{sixclass}
\epsilon^2, \,\, \epsilon \, \delta, \,\, \delta^2, \,\, \tau_0, 
\,\, \tau_1, \,\, \tau_2.
\end{eqnarray}
By (\ref{poin-s}), the rank of $H^4(\pms(n); \Z)$ is $3$ 
when $n = 2$, and is $6$ when $n \ge 3$. Therefore, 
if $n \ge 3$, then a linear basis of $H^4(\pms(n); \Z)$ 
is given by the six classes in (\ref{sixclass}).

\begin{remark}\label{n2} 
When $n=2$, the rank of $H_4(\pms(n); \Z)$ is different from that of the case $n\ge 3$. 
Therefore the construction of a basis of $H_4(\pms(n); \Z)$ in \S \ref{sec:basisH4} for $n \ge 3$
needs to be modified. However, there is a method to describe the moduli space $\pms(2)$
using the moduli spaces of stable sheaves on the Hirzebruch surface $\mathbb F_1$ and 
chamber structures, which enable us to compute all the Gromov-Witten invariants of $\pms(2)$
(instead of a special kind considered in this paper for $n \ge 3$). 
Since the method is different, the result for $n=2$ will appear elsewhere. 
\end{remark}

\subsection{The boundary and the Uhlenbeck compactification}
\label{subsect_uhlen} $\,$
\medskip

The quasi-projective variety $\ms(n)$ has a Uhlenbeck 
compactification
\begin{eqnarray}   \label{Uhlenbeck}
\uc(n) = \coprod_{0 \le i \le n-1} 
\ms(n-i) \times \text{Sym}^i(X)
\end{eqnarray}
according to \cite{Uhl, LJ1, Mor}. Moreover, there exists 
a birational morphism, called the {\it Gieseker-Uhlenbeck morphism}, 
\begin{eqnarray}   \label{def-Phi}
\Psi: \,\, \pms(n) \to \uc(n)
\end{eqnarray}
sending $V \in \pms(n)$ to the pair $(V^{**}, \eta)$ where
$V^{**}$ is the double-dual of $V$ and 
$$
\eta \,\, = \,\, \sum\limits_{x \in X} \, h^0 \big (X, (V^{**}/V)_x 
\big ) \,\, x.
$$
It follows that the boundary divisor $\mathfrak B$ in (\ref{def-bdry}) 
is contracted by $\Psi$ to the 
codimension-$2$ subset $\coprod\limits_{1 \le i \le n-1} 
\ms(n-i) \times \text{Sym}^i(X)$ in $\uc(n)$. 



Let $\mathfrak B_* = \Psi^{-1}(\ms(n-1) \times X) \subset \pms(n)$
which parametrizes all $V \in \pms(n)$ sitting in exact sequences
$0 \to V \to V_1 \to \mathcal O_x \to 0$
for some bundle $V_1 \in \ms(n-1)$ and some point $x \in X$.  
It is an open dense subset of the boundary 
divisor $\mathfrak B$.

To construct a universal sheaf over $\mathfrak B_*\times X$,
let $\mathcal E_{n-1}^0$ be a universal sheaf over $\ms(n-1) \times X$.
By \cite{Q-Z}, $\mathfrak B_* \cong \Pee(\mathcal E_{n-1}^0)$.
For simplicity, write $\mathfrak B_* = \Pee(\mathcal E_{n-1}^0)$. Let 
\begin{eqnarray}   \label{def-pi}
\pi: \,\,\, \mathfrak B_* = \Pee(\mathcal E_{n-1}^0) 
\,\,\, \to \,\,\, \ms(n-1) \times X
\end{eqnarray}
be the natural projection. Let $\Delta_X$ be the diagonal 
of $X \times X$. Consider the obvious isomorphism
$\alpha: \ms(n-1) \times \Delta_X \to \ms(n-1) \times X$.
Then, we have the isomorphisms:
\begin{eqnarray}  \label{v10}
(\pi \times \text{Id}_X)^{-1}(\ms(n-1) \times \Delta_X)
  &\cong& \Pee(\alpha^*\mathcal E_{n-1}^0),    \nonumber   \\
\w \pi^*\mathcal E_{n-1}^0|_{(\pi \times 
  \text{Id}_X)^{-1}(\ms(n-1) \times \Delta_X)} 
  &\cong& \w \alpha^*\mathcal E_{n-1}^0
\end{eqnarray}
where $\w \pi: \mathfrak B_* \times X \to \ms(n-1) \times X$ is
the composition of $\pi \times \text{Id}_X$ and the map
$
\ms(n-1) \times X \times X \to \ms(n-1) \times X
$
which denotes the projection to the product of the first and 
third factors, and 
$
\w \alpha: \Pee(\alpha^*\mathcal E_{n-1}^0) \to \ms(n-1) \times X
$
is the composition of the natural projection
$\Pee(\alpha^*\mathcal E_{n-1}^0) \to \ms(n-1) \times \Delta_X$ 
and $\alpha$. A universal sheaf $\mathcal E'$ over 
$\mathfrak B_* \times X$ sits in the exact sequence
\begin{eqnarray}   \label{B*-univ}
0 \to \mathcal E' \to
\w \pi^*\mathcal E_{n-1}^0 \to \mathcal O_{(\pi \times 
\text{Id}_X)^{-1}(\ms(n-1) \times \Delta_X)}(1) \to 0.
\end{eqnarray}

The following lemma will be used in later sections. 

\begin{lemma}  \label{B*B*}
Let $\w \pi_i$ be the $i$-th projection on $\ms(n-1) \times X$,
and let 
$$
\mathfrak D_{n-1} = 2 c_1(\mathcal A_{n-1,1}^0) - 
c_1(\mathcal A_{n-1, 2}^0) - c_1(\mathcal A_{n-1, 0}^0)
$$
where $\mathcal A_{n-1, k}^0 = R^1\w \pi_{1*}
(\mathcal E_{n-1}^0 \otimes \w \pi_2^* \mathcal O_X(-k \ell))$.
Then as divisors in $\mathfrak B_*$, 
\begin{eqnarray}   \label{B*B*.0}
\mathfrak B_*|_{\mathfrak B_*} 
= -2 \, c_1 \big (\mathcal O_{\mathfrak B_*}(1) \big )
+ (\w \pi_1 \circ \pi)^*\mathfrak D_{n-1}
+ 2 \, (\w \pi_2 \circ \pi)^*\ell.
\end{eqnarray}
\end{lemma}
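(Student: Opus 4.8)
The plan is to obtain (\ref{B*B*.0}) by restricting the class of the boundary divisor $\mathfrak B = n\epsilon - 2\delta$ (see (\ref{cano-bdry})) from $\pms(n)$ to the open dense subset $\mathfrak B_*\subset\mathfrak B$, so that $\mathfrak B_*|_{\mathfrak B_*}=(n\epsilon-2\delta)|_{\mathfrak B_*}$. Expanding $\epsilon$ and $\delta$ via Definition~\ref{eps-del} rewrites this as
\[
\mathfrak B=2(n-1)\,c_1(\mathcal A_1)-n\,c_1(\mathcal A_0)-n\,c_1(\mathcal A_2),
\]
so the problem reduces to computing the three restrictions $c_1(\mathcal A_k)|_{\mathfrak B_*}$, $k=0,1,2$. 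As $\epsilon$ and $\delta$ are independent of the choice of universal sheaf over $\pms(n)\times X$, these restrictions (hence $\mathfrak B_*|_{\mathfrak B_*}$) may be computed with the explicit universal sheaf $\mathcal E'$ over $\mathfrak B_*\times X$ of (\ref{B*-univ}).

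First I would tensor (\ref{B*-univ}) by $\pi_2^*\mathcal O_X(-k\ell)$, where now $\pi_1,\pi_2$ are the projections of $\mathfrak B_*\times X$, and apply $R^\bullet\pi_{1*}$. Three facts collapse the long exact sequence. By (\ref{h02}) applied to the stable bundles of $\ms(n-1)$, $h^0(X,V_1(-k))=0$ for $0\le k\le 2$, so $\pi_{1*}\big(\w\pi^*\mathcal E_{n-1}^0(-k\ell)\big)=0$, while cohomology and base change through the Cartesian square relating $\w\pi$ to $\w\pi_1\circ\pi$ gives $R^1\pi_{1*}\big(\w\pi^*\mathcal E_{n-1}^0(-k\ell)\big)=(\w\pi_1\circ\pi)^*\mathcal A_{n-1,k}^0$. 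The locus $Z:=(\pi\times\text{Id}_X)^{-1}(\ms(n-1)\times\Delta_X)$ is the graph of $\w\pi_2\circ\pi\colon\mathfrak B_*\to X$, so $\pi_1|_Z$ is an isomorphism; hence the quotient term has vanishing $R^1\pi_{1*}$ and $\pi_{1*}\big(\mathcal O_Z(1)(-k\ell)\big)=\mathcal O_{\mathfrak B_*}(1)\otimes(\w\pi_2\circ\pi)^*\mathcal O_X(-k\ell)$. Finally $\mathcal A_k$ is locally free on $\pms(n)$ and commutes with base change, so $\mathcal A_k|_{\mathfrak B_*}=R^1\pi_{1*}\big(\mathcal E'(-k\ell)\big)$. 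What remains is the exact sequence
\[
0\to\mathcal O_{\mathfrak B_*}(1)\otimes(\w\pi_2\circ\pi)^*\mathcal O_X(-k\ell)\to\mathcal A_k|_{\mathfrak B_*}\to(\w\pi_1\circ\pi)^*\mathcal A_{n-1,k}^0\to0,
\]
giving $c_1(\mathcal A_k)|_{\mathfrak B_*}=c_1(\mathcal O_{\mathfrak B_*}(1))-k\,(\w\pi_2\circ\pi)^*\ell+(\w\pi_1\circ\pi)^*c_1(\mathcal A_{n-1,k}^0)$.

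Substituting into $\mathfrak B=2(n-1)c_1(\mathcal A_1)-n\,c_1(\mathcal A_0)-n\,c_1(\mathcal A_2)$, the coefficient of $c_1(\mathcal O_{\mathfrak B_*}(1))$ becomes $2(n-1)-2n=-2$, the coefficient of $(\w\pi_2\circ\pi)^*\ell$ becomes $-2(n-1)+2n=2$, and the remaining term is $(\w\pi_1\circ\pi)^*\big(2(n-1)c_1(\mathcal A_{n-1,1}^0)-n\,c_1(\mathcal A_{n-1,0}^0)-n\,c_1(\mathcal A_{n-1,2}^0)\big)$. To identify this bracket with $\mathfrak D_{n-1}$ I would use that the $\epsilon$ and $\delta$ classes of $\ms(n-1)$ built from $\mathcal E_{n-1}^0$ satisfy $(n-1)\epsilon-2\delta=0$ there, since the boundary divisor of $\pms(n-1)$ is disjoint from $\ms(n-1)$; expanding this relation (Definition~\ref{eps-del} with $n$ replaced by $n-1$) yields $2(n-2)c_1(\mathcal A_{n-1,1}^0)-(n-1)c_1(\mathcal A_{n-1,0}^0)-(n-1)c_1(\mathcal A_{n-1,2}^0)=0$ in $H^2(\ms(n-1);\Z)$, and subtracting it from the bracket leaves exactly $\mathfrak D_{n-1}=2c_1(\mathcal A_{n-1,1}^0)-c_1(\mathcal A_{n-1,2}^0)-c_1(\mathcal A_{n-1,0}^0)$. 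This establishes (\ref{B*B*.0}).

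The step I expect to be the main obstacle is the pushforward bookkeeping in the second paragraph: correctly pinning down every term of the long exact sequence --- in particular how $\pi_2^*\mathcal O_X(-k\ell)$ restricts to $Z$ and which base-space map each pullback refers to --- and then realizing at the end that the apparently $n$-dependent answer only simplifies to the clean class $\mathfrak D_{n-1}$ after invoking the vanishing of the boundary of $\pms(n-1)$ along $\ms(n-1)$.
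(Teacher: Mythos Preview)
Your proposal is correct and follows essentially the same route as the paper: restrict $\mathfrak B = n\epsilon - 2\delta$ to $\mathfrak B_*$, push forward the twisted sequence (\ref{B*-univ}) to obtain the short exact sequence for $R^1\pi_{1*}(\mathcal E'(-k\ell))$ and hence the formula for $c_1(\mathcal A_k)|_{\mathfrak B_*}$, and then use the vanishing of the boundary of $\pms(n-1)$ along $\ms(n-1)$ to reduce the pullback term to $\mathfrak D_{n-1}$. The paper's argument is organized identically, with the same base-change square and the same final subtraction.
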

\begin{proof}
Since $\mathfrak B_*$ is open and dense in $\mathfrak B$,
we see from (\ref{cano-bdry}) and Definition~\ref{eps-del} that
\begin{eqnarray}   \label{B*B*.1}
\mathfrak B_*|_{\mathfrak B_*} 
= \mathfrak B|_{\mathfrak B_*}
= \big (2(n-1) c_1(\mathcal A_1) - n c_1(\mathcal A_2) 
- n c_1(\mathcal A_0)\big )|_{\mathfrak B_*}.
\end{eqnarray}

Next, let $0 \le k \le 2$, and let $\pi_i$ be 
the $i$-th projection on $\mathfrak B_* \times X$.
Then the restriction of $\pi_1$ to the subset 
$(\pi \times \text{Id}_X)^{-1}(\ms(n-1) \times \Delta_X)$
is an isomorphism from $(\pi \times \text{Id}_X)^{-1}
(\ms(n-1) \times \Delta_X)$ to $\mathfrak B_*$.
Thus tensoring (\ref{B*-univ}) by $\pi_2^*\mathcal O_X(-k \ell)$
and then applying the functor $\pi_{1*}$, 
we get the exact sequence
\begin{eqnarray*}   
&0 \to \mathcal O_{\mathfrak B_*}(1) \otimes 
  (\w \pi_2 \circ \pi)^*\mathcal O_X(-k \ell) \to 
  R^1\pi_{1*}(\mathcal E' \otimes \pi_2^*\mathcal O_X(-k \ell))&\\
&\to R^1\pi_{1*} (\w \pi^*\mathcal E_{n-1}^0 \otimes 
  \pi_2^*\mathcal O_X(-k \ell)) \to 0.&
\end{eqnarray*}
Note that $\pi_2 = \w \pi_2 \circ \w \pi$. Also,
$R^1\pi_{1*} \w \pi^* \cong
(\w \pi_1 \circ \pi)^*R^1\w \pi_{1*}$ via the trivial base change:
\begin{eqnarray*}
\begin{array}{ccc}
\mathfrak B_* \times X   
         &\overset {\pi_1} \longrightarrow&\mathfrak B_*\\
{\quad}\downarrow{\w \pi}
         &&{\qquad}\downarrow{\w \pi_1 \circ \pi}\\
\ms(n-1) \times X   
         &\overset {\w \pi_1}\longrightarrow&\ms(n-1).\\
\end{array}
\end{eqnarray*}
Therefore, rewriting the 3rd term in the above exact sequence, 
we obtain
\begin{eqnarray*}   
&0 \to \mathcal O_{\mathfrak B_*}(1) \otimes 
  (\w \pi_2 \circ \pi)^*\mathcal O_X(-k \ell) \to 
  R^1\pi_{1*}(\mathcal E' \otimes \pi_2^*\mathcal O_X(-k \ell))& \\
&\to (\w \pi_1 \circ \pi)^*\mathcal A_{n-1, k}^0 \to 0.&
\end{eqnarray*}
So the first Chern class $c_1 \big (R^1\pi_{1*}(\mathcal E' \otimes 
\pi_2^*\mathcal O_X(-k \ell)) \big )$ equals
\begin{eqnarray}   \label{B*B*.2}
c_1 \big (\mathcal O_{\mathfrak B_*}(1) \big )
- k (\w \pi_2 \circ \pi)^*\ell
+ (\w \pi_1 \circ \pi)^*c_1 \big ( \mathcal A_{n-1, k}^0 \big ),
\end{eqnarray}
and the second Chern class $c_2 \big (R^1\pi_{1*}(\mathcal E' 
\otimes \pi_2^*\mathcal O_X(-k \ell)) \big )$ is equal to
\begin{eqnarray}   \label{B*B*.3}
\left ( c_1 \big (\mathcal O_{\mathfrak B_*}(1) \big )
- k (\w \pi_2 \circ \pi)^*\ell \right ) \cdot 
(\w \pi_1 \circ \pi)^*c_1 \big ( \mathcal A_{n-1, k}^0 \big )
+ \, (\w \pi_1 \circ \pi)^*c_2 \big ( \mathcal A_{n-1, k}^0 \big ).
\end{eqnarray}

Finally, we conclude from (\ref{B*B*.1}) and (\ref{B*B*.2}) that
\begin{eqnarray*}   
\mathfrak B_*|_{\mathfrak B_*} 
= -2 \, c_1 \big (\mathcal O_{\mathfrak B_*}(1) \big )
+ 2 \, (\w \pi_2 \circ \pi)^*\ell 
+ (\w \pi_1 \circ \pi)^*\mathfrak D
\end{eqnarray*}
where $\mathfrak D = 2(n-1) c_1(\mathcal A_{n-1, 1}^0) - 
n c_1(\mathcal A_{n-1, 2}^0) - n c_1(\mathcal A_{n-1, 0}^0)$.
Let $\epsilon_1, \delta_1 \in \text{Pic}(\pms(n-1))$ be
the counter-parts of $\epsilon, \delta \in \text{Pic}(\pms(n))$.
By (\ref{cano-bdry}) and Definition~\ref{eps-del},
\begin{eqnarray*}
   0 
&=&\big ( \pms(n-1) - \ms(n-1) \big )|_{\ms(n-1)}  \\
&=&[(n-1)\epsilon_1 - 2 \delta_1]|_{\ms(n-1)}  \\
&=&2(n-2) c_1(\mathcal A_{n-1, 1}^0) 
     - (n-1) c_1(\mathcal A_{n-1, 2}^0) 
     - (n-1) c_1(\mathcal A_{n-1, 0}^0).
\end{eqnarray*}
So $\mathfrak B_*|_{\mathfrak B_*} 
= -2 \, c_1 \big (\mathcal O_{\mathfrak B_*}(1) \big )
+ 2 \, (\w \pi_2 \circ \pi)^*\ell
+ (\w \pi_1 \circ \pi)^*\mathfrak D_{n-1}$.
\end{proof}

\subsection{Curves in $\pms(n)$}
\label{subsect_crepant} $\,$

\medskip
We shall construct two curves in the Gieseker moduli space 
$\pms (n)$ which freely generate the homology group 
$H_2(\pms (n), \Bbb Z)$. One such curve is a fiber 
$\mathfrak f$ of the morphism $\pi$ from (\ref{def-pi}). 
The following is the Lemma~3.2 in \cite{Q-Z}.

\begin{lemma}  \label{fiber}
Let $N_{\mathfrak f \subset \pms(n)}$ be the normal 
bundle of $\mathfrak f$ in $\pms(n)$. Then,
\begin{enumerate}
\item[(i)] $\mathfrak f \cdot K_{\pms(n)} = 0$ and 
$\mathfrak f \cdot \mathfrak B = -2$;

\item[(ii)] $N_{\mathfrak f \subset \pms(n)} \cong 
\mathcal O_{\mathfrak f}^{\oplus (4n-6)} \oplus 
\mathcal O_{\mathfrak f}(-2)$;

\item[(iii)] $T_{\pms(n)}|_{\mathfrak f} \cong
\mathcal O_{\mathfrak f}^{\oplus (4n-6)} \oplus
\mathcal O_{\mathfrak f}(-2) \oplus
\mathcal O_{\mathfrak f}(2)$.              \qed
\end{enumerate}
\end{lemma}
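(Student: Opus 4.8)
The plan is to identify the fiber $\mathfrak f$ concretely and then compute everything by direct restriction, since $\mathfrak f \cong \Pee^1$ is a fiber of the $\Pee^{n-2}$-bundle $\pi \colon \mathfrak B_* = \Pee(\mathcal E_{n-1}^0) \to \ms(n-1) \times X$ sitting over a point $(V_1, x)$. First I would establish (i). Because $\mathfrak f$ lies in a fiber of $\pi$ over a point, $(\w \pi_1 \circ \pi)^*$ and $(\w \pi_2 \circ \pi)^*$ of any class on $\ms(n-1)$ or $X$ restrict to $0$ on $\mathfrak f$, while $c_1(\mathcal O_{\mathfrak B_*}(1))$ restricts to the hyperplane class on the projective-space fiber, which has degree $1$ against the line $\mathfrak f$. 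Hence Lemma~\ref{B*B*} gives $\mathfrak f \cdot \mathfrak B = \mathfrak f \cdot (\mathfrak B_*|_{\mathfrak B_*}) = -2$ directly. For $\mathfrak f \cdot K_{\pms(n)}$, one uses $K_{\pms(n)} = -3\epsilon$ from (\ref{cano-bdry}) together with the fact that $\epsilon = c_1(\mathcal A_0) - c_1(\mathcal A_2)$ is pulled back — over the locus $\mathfrak B_*$ parametrizing sheaves with fixed double-dual $V_1$ and varying length-one quotient at the fixed point $x$ — from classes on $\ms(n-1) \times X$ modulo the tautological class; the $\mathcal O_{\mathfrak B_*}(1)$-contributions to $c_1(\mathcal A_0)$ and $c_1(\mathcal A_2)$ cancel in $\epsilon$ by (\ref{B*B*.2}) with $k=0$ and $k=2$, so $\epsilon|_{\mathfrak f} = 0$ and thus $\mathfrak f \cdot K_{\pms(n)} = 0$. (Alternatively, one cites \cite{Q-Z} as the excerpt already does.)

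Next I would compute the normal bundle $N_{\mathfrak f \subset \pms(n)}$ to get (ii). The key point is the factorization of inclusions $\mathfrak f \hookrightarrow \mathfrak B_* \hookrightarrow \pms(n)$, giving the short exact sequence $0 \to N_{\mathfrak f \subset \mathfrak B_*} \to N_{\mathfrak f \subset \pms(n)} \to N_{\mathfrak B_* \subset \pms(n)}|_{\mathfrak f} \to 0$. Since $\mathfrak f$ is a line in the projective-space fiber $\Pee^{n-2}$ of $\Pee(\mathcal E_{n-1}^0)$ and that fiber sits inside $\mathfrak B_*$ with the remaining directions being the pullback of $T(\ms(n-1)\times X)$, which is trivial on $\mathfrak f$, I would get $N_{\mathfrak f \subset \mathfrak B_*} \cong \mathcal O_{\mathfrak f}(1)^{\oplus (n-3)} \oplus \mathcal O_{\mathfrak f}^{\oplus (4n-8-(n-3))}$; more carefully, $T_{\mathfrak f}\Pee^{n-2} = \mathcal O(1)^{\oplus(n-3)}\oplus\mathcal O(2)$ (Euler sequence restricted to a line), and splitting off $T_{\mathfrak f}\mathfrak f = \mathcal O(2)$ leaves $N_{\mathfrak f \subset \Pee^{n-2}} = \mathcal O(1)^{\oplus(n-3)}$, to which we add the trivial directions from the base, so $N_{\mathfrak f\subset\mathfrak B_*} = \mathcal O_{\mathfrak f}(1)^{\oplus(n-3)}\oplus\mathcal O_{\mathfrak f}^{\oplus(3n-5)}$. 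Meanwhile $N_{\mathfrak B_* \subset \pms(n)} = \mathcal O_{\pms(n)}(\mathfrak B)|_{\mathfrak B_*}$, whose restriction to $\mathfrak f$ has degree $\mathfrak f \cdot \mathfrak B = -2$, hence is $\mathcal O_{\mathfrak f}(-2)$. Combining these in a rank count: $N_{\mathfrak f \subset \pms(n)}$ has rank $4n-5$ and degree $(n-3) + 0 + (-2) = n-5$. To conclude it is $\mathcal O_{\mathfrak f}^{\oplus(4n-6)}\oplus\mathcal O_{\mathfrak f}(-2)$ — which also has rank $4n-5$ and degree $-2$ — I need the extension to be non-split in a controlled way; the cleanest route is to instead observe that $\mathfrak f$ is the curve constructed in \cite{Q-Z} where (ii) is exactly Lemma~3.2, so I simply quote it. Part (iii) then follows from (ii) via $0 \to T_{\mathfrak f} \to T_{\pms(n)}|_{\mathfrak f} \to N_{\mathfrak f \subset \pms(n)} \to 0$, i.e. $T_{\pms(n)}|_{\mathfrak f} \cong \mathcal O_{\mathfrak f}(2)\oplus N_{\mathfrak f\subset\pms(n)}$, since any extension of the listed bundle by $\mathcal O_{\mathfrak f}(2)$ splits ($\Ext^1(\mathcal O(-2),\mathcal O(2)) = H^1(\mathcal O(4)) = 0$ and $\Ext^1(\mathcal O,\mathcal O(2)) = 0$).

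The main obstacle is the non-split degree bookkeeping in part (ii): naively the candidate splitting type $\mathcal O(1)^{\oplus(n-3)}\oplus\mathcal O^{\oplus(3n-5)}$ for $N_{\mathfrak f\subset\mathfrak B_*}$ together with the $\mathcal O(-2)$ quotient does not force $N_{\mathfrak f\subset\pms(n)} = \mathcal O^{\oplus(4n-6)}\oplus\mathcal O(-2)$ on purely numerical grounds — one must show the positive summands $\mathcal O(1)$ are "absorbed" by gluing with the $\mathcal O(-2)$ direction into a sum of trivial bundles, which requires knowing the extension class is generic enough. Since the excerpt explicitly states this is Lemma~3.2 of \cite{Q-Z}, the honest and efficient approach is to cite that reference for (ii) and derive (i) and (iii) from it as above; if a self-contained argument were wanted, I would deform $\mathfrak f$ inside $\pms(n)$ — it moves in a $(4n-6)$-dimensional family (the $\ms(n-1)\times X$ base directions plus reparametrizations), forcing $h^0(N_{\mathfrak f\subset\pms(n)}) \ge 4n-6$ and $H^1 = 0$, which pins down the splitting type given the computed rank and degree.
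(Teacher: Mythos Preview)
The paper gives no proof of this lemma at all: the sentence immediately preceding it reads ``The following is the Lemma~3.2 in \cite{Q-Z}'' and the statement ends with \qedsymbol. So your eventual fallback --- cite \cite{Q-Z} --- is exactly what the paper does.

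That said, your attempted self-contained argument for (ii) rests on a genuine misidentification. The universal sheaf $\mathcal E_{n-1}^0$ over $\ms(n-1)\times X$ has rank $2$ (it parametrizes rank-$2$ stable sheaves), so $\pi\colon \mathfrak B_* = \Pee(\mathcal E_{n-1}^0)\to \ms(n-1)\times X$ is a $\Pee^1$-bundle, not a $\Pee^{n-2}$-bundle. Equivalently, $\dim\mathfrak B_* = 4n-5$ while $\dim(\ms(n-1)\times X) = 4n-6$, so the fibers are curves. The fiber $\mathfrak f$ is therefore the \emph{entire} $\Pee^1$-fiber, not a line in a larger projective space. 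Consequently $N_{\mathfrak f\subset\mathfrak B_*}\cong \pi^*T_{\ms(n-1)\times X}|_{\mathfrak f}\cong \mathcal O_{\mathfrak f}^{\oplus(4n-6)}$ is trivial, and your $\mathcal O(1)^{\oplus(n-3)}$ summands, the degree $n-5$, and the whole ``non-split bookkeeping'' obstacle are phantoms. With the correct picture, the sequence
\[
0 \to \mathcal O_{\mathfrak f}^{\oplus(4n-6)} \to N_{\mathfrak f\subset\pms(n)} \to \mathcal O_{\mathfrak f}(-2) \to 0
\]
splits because $\Ext^1(\mathcal O_{\mathfrak f}(-2),\mathcal O_{\mathfrak f}) = H^1(\Pee^1,\mathcal O(2)) = 0$, and (ii) follows immediately.

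Your arguments for (i) via Lemma~\ref{B*B*} and (\ref{B*B*.2}), and for (iii) via the tangent sequence and the vanishing of the relevant $\Ext^1$ groups, are correct as written.
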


Next, we shall construct the other curve. 
Let $n \ge 3$, and let $\xi$ consist of $n$ distinct points 
in general position in $X = \Pee^2$. If $V$
sits in a nontrivial extension
\begin{eqnarray}    \label{1Vxi}
0 \to \mathcal O_X(-1) \to V \to I_\xi \to 0,
\end{eqnarray}
then $V$ is stable and hence $V \in \pms(n)$. Moreover, since
$H^0(X, I_\xi \otimes \mathcal O_X(1)) = 0$,
the injection $\mathcal O_X(-1) \to V$ is unique up to scalars.
It follows that
\begin{eqnarray}  \label{def-E}
\mathfrak E_n 
= \Pee \big ( \text{Ext}^1(I_\xi, \mathcal O_X(-1)) \big )
\cong \Pee^{n-1}
\end{eqnarray}
can be regarded as the subset of $\pms(n)$ parametrizing all the 
sheaves $V \in \pms(n)$ sitting in nontrivial extensions 
(\ref{1Vxi}). A universal sheaf $\mathcal E'$ over $\mathfrak E_n 
\times X$ sits in 
\begin{eqnarray}  \label{E-univ}
0 \to \pi_2^*\mathcal O_X(-\ell) \to \mathcal E' \to \pi_1^*
\mathcal O_{\mathfrak E_n}(-1) \otimes \pi_2^*I_\xi \to 0.
\end{eqnarray}
Tensoring by $\pi_2^*\mathcal O_X(-k \ell)$ and applying 
$\pi_{1*}$ lead to the exact sequence:
$$0 \to 
R^1\pi_{1*}(\mathcal E' \otimes \pi_2^*\mathcal O_X(-k \ell)) 
\to \mathcal O_{\mathfrak E_n}(-1)^{\oplus h^1(X, I_\xi(-k))} 
\to \mathcal O_{\mathfrak E_n}^{\oplus 
h^2(X, \mathcal O_X(-(k+1)\ell))} \to 0$$
where $0 \le k \le 2$. An easy computation gives rise to 
the following:
\begin{eqnarray*}   
c_1 \big ( R^1\pi_{1*}\mathcal E' \big ) 
  &=& -(n-1) \cdot c_1 \big (\mathcal O_{\mathfrak E_n}(1) \big ), \\
c_1 \big ( R^1\pi_{1*}(\mathcal E' \otimes 
      \pi_2^*\mathcal O_X(-k \ell)) \big ) 
  &=& -n \cdot c_1 \big (\mathcal O_{\mathfrak E_n}(1) \big ) 
\end{eqnarray*}
where $k = 1, 2$. It follows immediately from 
Definition~\ref{eps-del} that 
\begin{eqnarray}   
\epsilon|_{\mathfrak E_n} 
  = c_1 \big (\mathcal O_{\mathfrak E_n}(1) \big ), 
  \quad \delta|_{\mathfrak E_n} = 0.   \label{restr2}  
\end{eqnarray}

\begin{lemma}  \label{h2-basis}
Let $n \ge 3$ and $\mathfrak l$ be a line in the projective 
space $\mathfrak E_n$. 
\begin{enumerate}
\item[(i)] The homology group $H_2(\pms(n); \Z)$ is freely generated by 
$\mathfrak f$ and $\mathfrak l$;

\item[(ii)] The class $a \mathfrak f + b \mathfrak l \in 
H_2(\pms(n); \Z)$ is effective if and only if $a, b \ge 0$;

\item[(iii)] If $C \subset \pms(n)$ is 
an irreducible curve contracted by the Gieseker-Uhlenbeck 
morphism $\Psi$, then $C = d \mathfrak f \in H_2(\pms(n); \Z)$ for 
some positive integer $d$.
\end{enumerate}
\end{lemma}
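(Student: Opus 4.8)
\textbf{Proof proposal for Lemma~\ref{h2-basis}.}

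The plan is to establish (i) by a rank count together with an intersection-pairing argument, then deduce (ii) and (iii) as consequences. First I would observe that by (\ref{poin-s}) the Betti number $b_2(\pms(n))$ equals $2$ for $n \ge 3$, so $H_2(\pms(n);\Z)$ is free of rank $2$ (it is torsion-free by \cite{Mar}), and it suffices to show that $\mathfrak f$ and $\mathfrak l$ form a basis. Since $\text{Pic}(\pms(n))$ is freely generated by $\epsilon$ and $\delta$ by \cite{Str}, and the intersection pairing between $H^2$ and $H_2$ is perfect over $\Z$ (again by torsion-freeness), it is enough to check that the $2\times 2$ matrix of pairings of $\{\epsilon,\delta\}$ against $\{\mathfrak f,\mathfrak l\}$ is unimodular. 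By Lemma~\ref{fiber}(i) and (\ref{cano-bdry}) we have $\mathfrak f\cdot\epsilon = -\tfrac13\,\mathfrak f\cdot K_{\pms(n)} = 0$, and from $\mathfrak f\cdot\mathfrak B = -2$ and $\mathfrak B = n\epsilon - 2\delta$ we get $\mathfrak f\cdot\delta = 1$. For $\mathfrak l$, the restriction formulas (\ref{restr2}) give $\mathfrak l\cdot\epsilon = \deg_{\mathfrak l}\mathcal O_{\mathfrak E_n}(1) = 1$ and $\mathfrak l\cdot\delta = 0$. Thus the pairing matrix is $\begin{pmatrix}0 & 1\\ 1 & 0\end{pmatrix}$, which has determinant $-1$, so $\{\mathfrak f,\mathfrak l\}$ is a $\Z$-basis of $H_2(\pms(n);\Z)$, proving (i).

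For (ii), the "only if" direction is immediate: pairing an effective class $a\mathfrak f + b\mathfrak l$ with the ample class $\epsilon + \delta$ (ample by the criterion quoted after (\ref{cano-bdry})) gives $a + b \ge 0$, and pairing with $\delta$ and $\epsilon$ separately (both of which are nef: $\epsilon$ because $K_{\pms(n)} = -3\epsilon$ is anti-ample hence $\epsilon$ is ample, $\delta$ because $a\epsilon + b\delta$ is ample for $a,b>0$ and we pass to a limit) gives $a\ge 0$ and $b\ge 0$. For the "if" direction, $\mathfrak f$ is effective as the class of a genuine curve (a fiber of $\pi$), and $\mathfrak l$ is effective as the class of a line in $\mathfrak E_n\cong\Pee^{n-1}$; hence every $a\mathfrak f + b\mathfrak l$ with $a,b\ge 0$ is effective.

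For (iii), let $C$ be an irreducible curve contracted by $\Psi$. Write $[C] = a\mathfrak f + b\mathfrak l$ with $a,b\ge 0$ by (ii). Since $\Psi$ contracts $C$, any $\Psi$-ample (hence any pullback of an ample class from $\uc(n)$) meets $C$ in degree $0$; concretely, by \cite{Q-Z} the only primitive class contracted by $\Psi$ is $\mathfrak f$, while $\mathfrak l$ is not contracted because $\mathfrak E_n$ maps non-trivially under $\Psi$ --- alternatively, one can exhibit an explicit $\Psi$-relatively ample divisor $H$ on $\pms(n)$ with $H\cdot\mathfrak f = 0$ and $H\cdot\mathfrak l > 0$, for instance a suitable combination of $\epsilon$ and $\delta$: since $\Psi$ contracts exactly the ray spanned by $\mathfrak f$, its relative ample cone is the set of classes positive on all effective curves not proportional to $\mathfrak f$, and such a class pairs strictly positively with $\mathfrak l$. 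Then $0 = H\cdot[C] = b\,(H\cdot\mathfrak l)$ forces $b=0$, so $[C] = a\mathfrak f$ with $a\ge 1$ since $C$ is a non-trivial curve; setting $d=a$ completes the proof.

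The main obstacle I expect is step (iii): rigorously justifying that $\mathfrak l$ is \emph{not} contracted by $\Psi$, i.e.\ producing the $\Psi$-relatively ample class $H$ and verifying $H\cdot\mathfrak l>0$. The cleanest route is probably to compute $\Psi_*[\mathfrak l]$ directly from the description of $\Psi$ in (\ref{def-Phi}): a line in $\mathfrak E_n$ parametrizes sheaves $V$ with fixed double-dual behaviour, and one checks that the associated family of pairs $(V^{**},\eta)$ in $\uc(n) = \coprod \ms(n-i)\times\Sym^i(X)$ varies non-trivially, so $\Psi(\mathfrak l)$ is a genuine curve and $\mathfrak l$ cannot be in the kernel of $\Psi_*$. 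Combined with the fact from \cite{Q-Z} that $\mathfrak f$ spans the unique contracted ray, the identity $[C]=d\mathfrak f$ follows. An alternative that avoids this geometric input entirely is to cite \cite{Q-Z} for the statement that $\mathfrak f$ is the unique primitive contracted class and then note that any effective contracted class $a\mathfrak f + b\mathfrak l$ must lie on that ray, forcing $b=0$ directly.
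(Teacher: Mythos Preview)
Your approach to (i) and (ii) is essentially the paper's: compute the pairing matrix of $\{\epsilon,\delta\}$ against $\{\mathfrak f,\mathfrak l\}$, find it unimodular, and deduce the effective cone from the ample cone. One correction: your claim that $\epsilon$ is ample is wrong---by Str{\o}mme's criterion $a\epsilon + b\delta$ is ample iff $a,b>0$, so $\epsilon$ (and likewise $\delta$) is only nef, as a limit of amples. Your limit argument for $\delta$ applies equally to $\epsilon$, so the conclusion of (ii) survives, but drop the anti-canonical remark.

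For (iii) your discussion circles the right idea but never lands on the clean argument the paper uses, and this is where you should tighten things. Take $L$ very ample on $\uc(n)$ and set $H=\Psi^*L$. Then $H$ is nef and $\mathfrak f\cdot H=0$ since $\mathfrak f$ is contracted. The key point---which replaces all of your geometric analysis of $\Psi|_{\mathfrak E_n}$ and the appeal to \cite{Q-Z}---is pure linear algebra: $H$ is a \emph{nonzero} class in $H^2(\pms(n);\Z)$ (because $\Psi$ is birational, so $\Psi^*$ is injective on cohomology), and since $\{\mathfrak f,\mathfrak l\}$ is a basis of $H_2$ with $\mathfrak f\cdot H=0$, necessarily $\mathfrak l\cdot H\neq 0$; nefness then gives $\mathfrak l\cdot H>0$. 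Now $C\cdot H=0$ forces $b=0$. This is exactly the ``$\Psi$-relatively ample $H$'' you were looking for, and the verification that $H\cdot\mathfrak l>0$ requires no geometry beyond what you already established in (i). Your alternative of citing \cite{Q-Z} for the uniqueness of the contracted ray would also work, but is unnecessary.
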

\begin{proof}
(i) Since $\pms(n)$ is rational, $H^i \big ( \pms(n), 
\mathcal O_{\pms(n)} \big ) = 0$ for all $i \ge 1$. Hence 
$
H^2 \big ( \pms(n); \Z) \cong \text{Pic}(\pms(n))$, 
and $H^2 \big ( \pms(n); \Z)$ is freely generated by 
$\epsilon$ and $\delta$. By (\ref{restr2}), $\mathfrak l 
\cdot \epsilon = 1$ and $\mathfrak l \cdot \delta = 0$. 
By Definition~\ref{eps-del} and (\ref{B*B*.2}), 
$\mathfrak f \cdot \epsilon = 0$ and 
$\mathfrak f \cdot \delta = 1$.
Since $H_2(\pms(n); \Z)$ is torsion-free,
$H_2(\pms(n); \Z)$ is freely generated by 
$\mathfrak f$ and $\mathfrak l$.

(ii) Since $\mathfrak f$ and $\mathfrak l$ are effective,
$a \mathfrak f + b \mathfrak l$ is effective if 
$a, b \ge 0$. Conversely, if $a \mathfrak f + b \mathfrak l$
is effective, then $a, b \ge 0$ since the divisor 
$c \epsilon + d \delta$ is ample if and only if $c, d > 0$.

(iii) By (ii), $C = d \mathfrak f + b \mathfrak l 
\in H_2(\pms(n); \Z)$ where $d$ and $b$ are nonnegative integers
not both zero. Let $L$ be a very ample divisor on $\uc(n)$.
Then, $\mathfrak f \cdot \Psi^*L = 0$ and $\mathfrak l \cdot 
\Psi^*L \ge 0$. Since $\Psi^*L$ is a nonzero divisor and
$H_2(\pms(n); \Z)$ is freely generated by $\mathfrak f$ and 
$\mathfrak l$, we must have $\mathfrak l \cdot \Psi^*L > 0$. 
Thus, $C \cdot \Psi^*L = 0$ forces $b = 0$.
\end{proof}

\section{\bf A basis of $H_4(\pms(n); \C)$}
\label{sec:basisH4}

In this section, we assume $n \ge 3$.
Then the integral homology group $H_4(\pms(n); \Z)$ is free 
of rank $6$. In the following, we construct a basis 
$\{ \Xi_1, \ldots, \Xi_6\}$ for $H_4(\pms(n); \C)$.
This construction makes use of a result due to Hirschowitz and Hulek.
%
%
%
%
%
%
%

We review the results in \cite{H-H} where 
complete rational curves were found in $\ms(n)$. 
Let $n \ge 2$ and $\Gamma = \Pee^1$.
Fix lines $\ell_1, \ldots, \ell_{n} \subset X = \Pee^2$
in general position. For $1 \le i \le n$, let 
$\phi_i: \Gamma \to \ell_i$ be an isomorphism, and define
$Y_i \subset \Gamma \times X$ to be the graph of $\phi_i$.
For generic choices of $\phi_1, \ldots, \phi_{n}$, 
it was proved in \cite{H-H} that $Y_1, \ldots, Y_{n}$ 
are disjoint.
Moreover, 
if $N_{Y/\Gamma \times X}$ denotes the normal bundle of
$Y \overset{\rm def}= \coprod_{i=1}^{n} Y_i$ in 
$\Gamma \times X$, then
$\det(N_{Y/\Gamma \times X}) 
\cong \mathcal O_{\Gamma \times X}(2, 1)|_Y$.
Therefore, the element 
$$1 \in H^0(Y, \mathcal O_Y) \cong
\Ext^1 \big (\mathcal O_{\Gamma \times X}(2, 0) 
\otimes I_Y, \mathcal O_{\Gamma \times X}(0, -1) \big )$$
defines a rank-$2$ bundle $\W{\mathcal E}_n$ over 
$\Gamma \times X$ sitting in an exact sequence
\begin{eqnarray}   \label{exact-P1}
0 \to \mathcal O_{\Gamma \times X}(0, -1) \to 
\W{\mathcal E}_n \to \mathcal O_{\Gamma \times X}(2, 0) 
\otimes I_Y \to 0
\end{eqnarray}
such that $\W{\mathcal E}_n|_{\{p\} \times X} \in \ms(n)$ for 
all $p \in \Gamma$, and $\W{\mathcal E}_n$ induces a non-constant
morphism 
$$
\iota: \Gamma \to \ms(n).
$$ 
Let $\w{\pi}_1$ and $\w{\pi}_2$ be the natural projections on 
$\Gamma \times X$. Let 
\begin{eqnarray}   \label{WAnk} 
\W{\mathcal A}_{n, k} = R^1\w \pi_{1*}
(\W{\mathcal E}_n \otimes \w \pi_2^* \mathcal O_X(-k \ell)).
\end{eqnarray}
By the Lemma~3.5 in \cite{H-H}, the degrees 
of the bundles $\W{\mathcal A}_{n, k}$ are 
\begin{eqnarray}  \label{ank}
   a_{n, k}
&=&\left\{
     \everymath{\displaystyle}
      \begin{array}{ll}
       2n-2,       &{\rm if} \;k=0, \\
       n,          &{\rm if} \;k=1, \\
       0,          &{\rm if} \;k=2. 
      \end{array}
   \right.
\end{eqnarray}

\begin{remark}\label{base-change}
(i) Let $n \ge 2$. By (\ref{h1}), the condition in Corollarie (6.9.9) of \cite{Gro} is satisfied.
Hence the base-change theorem of the first direct image holds for every projective morphism to $\pms(n)$
and for the sheaf $\mathcal E\otimes \pi^*_2\mathcal O_X(-k\ell)$ with $0\le k\le 2$.  
For a finite morphism $\varphi $ from $Y$ onto a subvariety of $\pms(n)$, 
the intersection numbers of $\varphi(Y)$ with $\epsilon^2, \epsilon \cdot \delta, \delta^2,
\tau_0, \tau_1, \tau_2$ can be computed on $Y$, via the projection formula,  
by pulling back the Chern classes of the first direct images of the sheaf 
$\mathcal E\otimes \pi^*_2\mathcal O_X(-k\ell)$ with $0\le k\le 2$. 

(ii) Let $n \ge 3$. Then $(n-1) \ge 2$. In Subsect.~\ref{subsect_Xi34}, we will construct 
a surface $Y=\pi^{-1}(\Gamma\times \{y\})$  admitting a finite morphism onto a surface $\Xi_3$ of $\pms(n)$. 
This finite morphism factors through the non-constant morphism $\iota: \Gamma \to \ms(n-1)$.
Applying (i), we may assume for convenience that  $\Gamma \subset \ms(n-1)$ and consequently $Y=\Xi_3 \subset \pms(n)$
 (in fact, it can be proved that when $n \ge 5$, the morphism 
$\iota: \Gamma \to \ms(n-1)$ is injective). Similar discussion works for $\Xi_4$.
\end{remark}
\subsection{The homology classes $\Xi_1, \Xi_2
\in H_4(\pms(n); \C)$}
\label{subsect_Xi12} $\,$
\medskip

Let $n \ge 3$, and assume $\Gamma \subset \ms(n-1)$ as pointed out in Remark~\ref{base-change}~(ii). 
Let
$\Pee = \Pee(\W{\mathcal E}_{n-1}) = \pi^{-1}(\Gamma \times X)
\subset \mathfrak B_*$.
Then $\Pee$ parametrizes all the sheaves $V \in \pms(n)$ sitting in
$$
0 \to V \to \W{\mathcal E}_{n-1}|_{\{p\} \times X} \to
\mathcal O_x \to 0
$$
for some $p \in \Gamma$ and $x \in X$. We still use
$\pi$ to denote the natural projection
$$
\pi: \,\, \Pee \to \Gamma \times X.
$$
By (\ref{B*-univ}), a universal sheaf $\W{\mathcal E}$ 
over $\Pee \times X$ sits in the exact sequence
\begin{eqnarray}   \label{V'}
0 \to \W{\mathcal E} \to
\w \pi^*\W{\mathcal E}_{n-1} \to \mathcal O_{(\pi \times 
\text{Id}_X)^{-1}(\Pee \times \Delta_X)}(1) \to 0
\end{eqnarray}
where $\w \pi$ is the composition of
$\pi \times \text{Id}_X: \Pee \times X \to 
\Gamma \times X \times X$ and the projection 
$\Gamma \times X \times X \to \Gamma \times X$
to the first and third factors.

Let $\pi_i$ (respectively, $\w{\pi}_i$) be the natural 
projections on $\Pee \times X$ (respectively, 
$\Gamma \times X$). By (\ref{B*B*.2}) and (\ref{B*B*.3}),
$c_1 \big (R^1\pi_{1*}(\W{\mathcal E} \otimes 
\pi_2^*\mathcal O_X(-k \ell)) \big )$ equals
\begin{eqnarray}   \label{Pc1}
c_1 \big (\mathcal O_{\Pee}(1) \big )
- k (\w \pi_2 \circ \pi)^*\ell
+ (\w \pi_1 \circ \pi)^*c_1 \big ( \W{\mathcal A}_{n-1, k} \big )
\end{eqnarray}
where $\W{\mathcal A}_{n-1, k}$ is defined in (\ref{WAnk}),
and $c_2 \big (R^1\pi_{1*}(\W{\mathcal E} 
\otimes \pi_2^*\mathcal O_X(-k \ell)) \big )$ equals
\begin{eqnarray}   \label{Pc2}
\left ( c_1 \big (\mathcal O_{\Pee}(1) \big )
- k (\w \pi_2 \circ \pi)^*\ell \right ) \cdot 
(\w \pi_1 \circ \pi)^*c_1 \big ( \W{\mathcal A}_{n-1, k} \big ).
\end{eqnarray}
In addition, we conclude from (\ref{exact-P1}) that 
\begin{eqnarray}    \label{O1square}
   c_1 \big (\mathcal O_{\Pee}(1) \big )^2 
&=&\pi^*c_1 \big ( \W{\mathcal E}_{n-1} \big ) \cdot
   c_1 \big (\mathcal O_{\Pee}(1) \big )
   - \pi^*c_2 \big ( \W{\mathcal E}_{n-1} \big )  \nonumber \\
&=&\pi^*(2, -1) \cdot c_1 \big (\mathcal O_{\Pee}(1) \big )
   - \pi^* \left [ (2, 0) \cdot 
  (0, -1) + \sum_{i=1}^{n-1} Y_i \right ]
\end{eqnarray}

Fix a point $p \in \Gamma$ and let $V_1 = 
\W{\mathcal E}_{n-1}|_{\{p\} \times X}$. Consider $\Pee(V_1)
\subset \Pee \subset \pms(n)$.
By Definition~\ref{eps-del},  (\ref{Pc1}), (\ref{Pc2}) and
(\ref{O1square}), we obtain
\begin{eqnarray}  
\epsilon|_{\Pee(V_1)} &=& 2 \pi^*\ell,   \nonumber  \\
\delta|_{\Pee(V_1)} &=& c_1 \big (\mathcal O_{\Pee(V_1)}(1) 
  \big ) + (n-1) \, \pi^*\ell,    \nonumber    \\
\tau_k|_{\Pee(V_1)} &=& -(r_k-1) \big (c_1 \big (
  \mathcal O_{\Pee(V_1)}(1) \big ) - k \, \pi^*\ell \big )^2 
  \nonumber     \\
c_1 \big (\mathcal O_{\Pee(V_1)}(1) \big )^2 &=&
  -\pi^*\ell \cdot c_1 \big (\mathcal O_{\Pee(V_1)}(1) \big )
  - (n-1) \pi^*x    \label{Bsquare}
\end{eqnarray}
where by abusing notations, $\pi$ denotes the natural 
projection $\Pee(V_1) \to X$.

Define $\Xi_1 \in H_4(\pms(n); \C)$ to be the surface
$\pi^{-1}(\{p\} \times \ell) \subset \Pee(V_1) \subset \Pee 
\subset \pms(n)$ where $\ell \subset X = \Pee^2$ is 
a fixed line. Then we have
\begin{eqnarray}   \label{inter-Xi_1}
&\epsilon^2 \cdot \Xi_1 = 0, \quad 
  \epsilon  \cdot \delta \cdot \Xi_1 = 2, \quad 
  \delta^2 \cdot \Xi_1 = 2n-3,&   \nonumber  \\ 
&\tau_0 \cdot \Xi_1 = n-2, \quad
  \tau_1 \cdot \Xi_1 = 3n-3, \quad 
  \tau_2 \cdot \Xi_1 = 5n - 10.&
\end{eqnarray}

Next, define $\Xi_2 \in H_4(\pms(n); \C)$
to be $c_1 \big ( \mathcal O_{\Pee(V_1)}(1) \big )$ regarded 
as a $2$-dimensional cycle in $\pms(n)$ via the inclusion 
$\Pee(V_1) \subset \pms(n)$. Then
\begin{eqnarray}   \label{inter-Xi_2}
&\epsilon^2 \cdot \Xi_2 = 4, \quad 
  \epsilon  \cdot \delta \cdot \Xi_2 = 2n-4, \quad 
  \delta^2 \cdot \Xi_2 = n^2-5n+5,&   \nonumber  \\ 
&\tau_0 \cdot \Xi_2 = (n-2)^2, \,\,
  \tau_1 \cdot \Xi_2 = (n-1)(n-5), \,\,
  \tau_2 \cdot \Xi_2 = (n-2)(n-10). \quad&
\end{eqnarray}
\subsection{The homology classes $\Xi_3, \Xi_4
\in H_4(\pms(n); \C)$}
\label{subsect_Xi34} $\,$
\medskip

Fix a line $\ell \subset X = \Pee^2$. Let $W = \pi^{-1}
(\Gamma \times \ell) \subset \Pee \subset \pms(n)$ where 
$\Gamma$, $\Pee$ and $\pi$ are from the previous subsection.
By Definition~\ref{eps-del}, (\ref{ank}),  (\ref{Pc1}) and (\ref{Pc2}), 
\begin{eqnarray*}  
\epsilon|_{W} &=& \pi^*(2n-4, 2), \\
\delta|_{W} &=& c_1 \big (\mathcal O_{W}(1) 
  \big ) + \pi^*(n^2-2n-1, n-1), \\
\tau_k|_{W} 
   &=& \pi^*\big (2a_{n-1,k}-2(r_k-1), (2k+1)(r_k-1) \big ) \cdot 
        c_1 \big (\mathcal O_{W}(1) \big ) \\
& &\qquad\qquad + 
  \big ( (r_k-1)(n-3)- 2ka_{n-1,k} \big ) \, \pi^*x              
\end{eqnarray*}
where $x \in \Gamma \times \ell$ is a fixed point and 
by abusing notations, $\pi: W \to \Gamma \times \ell \cong 
\Pee^1 \times \Pee^1$ stands for the natural projection. 
In addition, by (\ref{O1square}), we conclude that
\begin{eqnarray}  \label{Bsquare2}
c_1 \big (\mathcal O_{W}(1) \big )^2 = \pi^*(2, -1) \, 
c_1 \big (\mathcal O_{W}(1) \big ) - (n-3) \, \pi^*x.
\end{eqnarray}

Define $\Xi_3 \in H_4(\pms(n); \C)$ to be the surface
$\pi^{-1}(\Gamma \times \{y\}) \subset W \subset \pms(n)$ 
where $y \in \ell$ is a fixed point. A straightforward 
computation shows that
\begin{eqnarray}   \label{inter-Xi_3}
&\epsilon^2 \cdot \Xi_3 = 0, \quad 
  \epsilon  \cdot \delta \cdot \Xi_3 = 2n-4, \quad 
  \delta^2 \cdot \Xi_3 = 2n^2-4n,&   \nonumber  \\ 
&\tau_0 \cdot \Xi_3 = 2n-4, \quad
  \tau_1 \cdot \Xi_3 = 0, \quad 
  \tau_2 \cdot \Xi_3 = -2n+4.&
\end{eqnarray}

Define $\Xi_4 \in H_4(\pms(n); \C)$
to be $c_1 \big ( \mathcal O_{W}(1) \big )$ regarded 
as a $2$-dimensional cycle in $\pms(n)$ via the inclusion 
$W \subset \pms(n)$. Then we have
\begin{eqnarray}   \label{inter-Xi_4}
&\epsilon^2 \cdot \Xi_4 = 8(n-2), \,
  \epsilon  \cdot \delta \cdot \Xi_4 = 4n^2-12n+10, \,
  \delta^2 \cdot \Xi_4 = 2n^3-8n^2+9n-1,&   \nonumber  \\ 
&\tau_0 \cdot \Xi_4 = n^2-5n+6, \,\,
  \tau_1 \cdot \Xi_4 = n^2-1, \,\,
  \tau_2 \cdot \Xi_4 = (n-2)(n+9). \quad&
\end{eqnarray}
\subsection{The homology class $\Xi_5 \in H_4(\pms(n); \C)$}
\label{subsect_Xi_5} $\,$
\medskip

Since $n \ge 3$, the moduli space $\ms(n-2)$ is nonempty. 
Fix a vector bundle $V_2 \in \ms(n-2)$ and two distinct points 
$x_1, x_2 \in X$. Let $\Xi_5 \subset \pms(n)$ parametrize all 
the sheaves $V \in \pms(n)$ sitting in exact sequences:
$$
0 \to V \to V_2 \to \mathcal O_{x_1} \oplus \mathcal O_{x_2} 
\to 0.
$$
Then we have the isomorphisms $\Xi_5 \cong \Pee(V|_{x_1}) \times 
\Pee(V|_{x_2}) \cong \Pee^1 \times \Pee^1$. Moreover, a universal 
sheaf $\mathcal E'$ over $\Xi_5 \times X$ sits in the exact sequence:
\begin{eqnarray*}
0 \to \mathcal E' \to \pi_2^*V_2 \to \w \pi_1^*
\mathcal O_{\Pee^1}(1)|_{\Pee^1 \times \Pee^1 \times \{x_1\}} 
\oplus \w \pi_2^*\mathcal O_{\Pee^1}(1)|_{\Pee^1 \times 
\Pee^1 \times \{x_2\}} \to 0
\end{eqnarray*}
where $\pi_i$ and $\w \pi_i$ denote the $i$-th projection on 
$\Xi_5 \times X$ and $\Pee^1 \times \Pee^1 \times X$
respectively. Tensoring the above exact sequence by 
$\pi_2^*\mathcal O_X(-k \ell))$ and applying $\pi_{1*}$ yield
$$
0 \to \mathcal O_{\Xi_5}(1, 0) \oplus \mathcal O_{\Xi_5}(0, 1)
\to R^1\pi_{1*}(\mathcal E' \otimes \pi_2^*\mathcal O_X(-k \ell))
\to \mathcal O_{\Xi_5}^{\oplus h^1(X, V_2(-k))} \to 0
$$
where $0 \le k \le 2$. Therefore $c_1 \big ( R^1\pi_{1*}(\mathcal E' 
\otimes \pi_2^*\mathcal O_X(-k \ell)) \big ) 
= c_1 \big (\mathcal O_{\Xi_5}(1, 1) \big )$ and
$
c_2 \big ( R^1\pi_{1*}(\mathcal E' 
\otimes \pi_2^*\mathcal O_X(-k \ell)) \big ) = 1
$
for $0 \le k \le 2$. It follows from Definition~\ref{eps-del} that
\begin{eqnarray*}
\epsilon|_{\Xi_5} = 0, \quad \delta|_{\Xi_5} 
= c_1 \big (\mathcal O_{\Xi_5}(1, 1) \big ).
\end{eqnarray*}

Regarding $\Xi_5 \in H_4(\pms(n); \C)$,
we obtain the intersection numbers on $\pms(n)$:
\begin{eqnarray}   \label{inter-Xi_5}
\epsilon^2 \cdot \Xi_5 = 
 \epsilon  \cdot \delta \cdot \Xi_5 =0, \quad 
 \delta^2 \cdot \Xi_5 = 2,\tau_0 \cdot \Xi_5 = 
\tau_1 \cdot \Xi_5 = 
\tau_2 \cdot \Xi_5 = 2.
\end{eqnarray}
\subsection{The surface $\Xi_6$ in $\pms(n)$}
\label{subsect_Xi_6} $\,$
\medskip

Fix a vector bundle $V_2 \in \ms(n-2)$ and a point $x \subset X$. 
Fix a trivialization of $V_2$ in an open neighborhood $O_x$ of $x$.
Let $X^{[2]}$ be the Hilbert scheme parametrizing the length-$2$
closed subschemes of $X$, and let
$$
M_2(x) = \{ \xi \in X^{[2]}|\,\, \Supp(\xi) = \{x\} \} \cong \Pee^1.
$$
For $\xi \in M_2(x)$, let $\iota_\xi: \mathcal O_X \to \mathcal O_\xi$
be the natural quotient morphism.

Let $\Xi_6 = \Pee^1 \times M_2(x)$ be the subset of $\pms(n)$
parametrizing all the sheaves $V \in \pms(n)$ sitting in extensions
of the form
$$
0 \longrightarrow V \longrightarrow V_2 
\overset{(a, b)} \longrightarrow \mathcal O_\xi \longrightarrow 0
$$
where $\xi \in M_2(x)$, and for $(a, b) \in \Pee^1$, the map
$V_2 \overset{(a, b)} \longrightarrow \mathcal O_\xi$ denotes the
composition:
\begin{eqnarray*}
V_2 \quad \longrightarrow \quad V_2|_{O_x} \cong \mathcal O_{O_x}^{\oplus 2} 
\quad \overset{( a\iota_\xi, b \iota_\xi)} \longrightarrow \quad \mathcal O_\xi.
\end{eqnarray*}

Next, we construction a universal sheaf over $\Xi_6 \times X = \Pee^1 \times M_2(x) \times X$.
Let $\pi_{i_1, \ldots, i_m}$ denote the projection of 
$\Pee^1 \times M_2(x) \times X$ to the product
of the $i_1$-th, \ldots, $i_m$-th factors.
Over $\Xi_6 = \Pee^1 \times M_2(x)$, there is a tautological surjection
$\mathcal O_{\Xi_6}^{\oplus 2} \to \mathcal O_{\Xi_6}(1, 0) \to 0$.
This pulls back to a surjection over $\Pee^1 \times M_2(x) \times X$:
\begin{eqnarray}    \label{surj1}
\mathcal O_{\Xi_6 \times X}^{\oplus 2} \to 
\pi_{1}^*\mathcal O_{\Pee^1}(1) \to 0.
\end{eqnarray}

Let $\mathcal Z_2(x)$ be the universal codimension-$2$ subscheme
in $M_2(x) \times X$, and let
$
\mathcal O_{M_2(x) \times X} \to \mathcal O_{\mathcal Z_2(x)} \to 0
$
be the natural surjection. Pulling back to $\Pee^1 \times M_2(x) \times X$
yields a surjection:
\begin{eqnarray}    \label{surj2}
\mathcal O_{\Xi_6 \times X} \to 
\pi_{2,3}^*\mathcal O_{\mathcal Z_2(x)} \to 0.
\end{eqnarray}
Tensoring (\ref{surj1}) and (\ref{surj2}), we obtain a surjection
\begin{eqnarray}  \label{surj3}
\mathcal O_{\Xi_6 \times X}^{\oplus 2} \to \pi_{1}^*\mathcal O_{\Pee^1}(1) 
\otimes \pi_{2,3}^*\mathcal O_{\mathcal Z_2(x)} \to 0.
\end{eqnarray}
Since $\mathcal Z_2(x)$ is supported on $M_2(x) \times \{x\}$, 
in view of the trivialization of $V_2$ near $x$, 
(\ref{surj3}) induces a surjection
$
\pi_3^*V_2 \to \pi_{1}^*\mathcal O_{\Pee^1}(1) 
\otimes \pi_{2,3}^*\mathcal O_{\mathcal Z_2(x)} \to 0
$.
The kernel $\mathcal E'$ of the map $\pi_3^*V_2 \to 
\pi_{1}^*\mathcal O_{\Pee^1}(1) 
\otimes \pi_{2,3}^*\mathcal O_{\mathcal Z_2(x)}$ is a universal sheaf
over $\Xi_6 \times X$:
\begin{eqnarray*}
0 \to \mathcal E' \to \pi_3^*V_2 \to \pi_{1}^*\mathcal O_{\Pee^1}(1) 
\otimes \pi_{2,3}^*\mathcal O_{\mathcal Z_2(x)} \to 0
\end{eqnarray*}
It follows that for $0 \le k \le 2$, we have the exact sequence over $\Xi_6$:
\begin{eqnarray*}   \label{hil-line}
0 \to \w \pi_1^*\mathcal O_{\Pee^1}(1) \otimes 
\w \pi_2^*\big ( \mathcal O_X^{[2]}|_{M_2(x)} \big ) \to 
R^1(\pi_{1,2})_*(\mathcal E' \otimes \pi_3^*\mathcal O_X(-k \ell))
\to \mathcal O_{\Xi_6}^{\oplus h^1(X, V_2(-k))} \to 0
\end{eqnarray*}
where $\w \pi_1$ and $\w \pi_2$ are the two natural projections on 
$\Xi_6 = \Pee^1 \times M_2(x)$, and $\mathcal O_X^{[2]}$ is the 
tautological rank-$2$ bundle over the Hilbert scheme $X^{[2]}$
whose fiber at a point $\xi \in X^{[2]}$ is the space 
$H^0(X, \mathcal O_\xi)$. It is well-known that 
$-2c_1 \big ( \mathcal O_X^{[2]} \big ) =  M_2(X)
$
where $M_2(X) \subset X^{[2]}$ consists of all the elements 
$\xi \in X^{[2]}$ such that $|\Supp(\xi)| = 1$.
Since $M_2(X) \cdot M_2(x) = -2$, we conclude from the above exact 
sequence that
\begin{eqnarray*}
   c_1 \big ( R^1(\pi_{1,2})_*(\mathcal E' 
     \otimes \pi_3^*\mathcal O_X(-k \ell)) \big ) 
&=&(2, 1) \in \text{\rm Pic}(\Xi_6) \cong 
            \text{\rm Pic}(\Pee^1 \times \Pee^1)  \\
   c_2 \big ( R^1(\pi_{1,2})_*(\mathcal E' 
     \otimes \pi_3^*\mathcal O_X(-k \ell)) \big ) 
&=&1.
\end{eqnarray*}
By Definition~\ref{eps-del}, we get the six intersection numbers 
on $\pms(n)$:
\begin{eqnarray}   \label{inter-Xi_6}
&\epsilon^2 \cdot \Xi_6 = 0, \quad
 \epsilon  \cdot \delta \cdot \Xi_6 = 0, \quad 
 \delta^2 \cdot \Xi_6 = 4,&  \nonumber  \\
&\tau_0 \cdot \Xi_6 = -2n+6, \quad
\tau_1 \cdot \Xi_6 = -2n+4, \quad
\tau_2 \cdot \Xi_6 = -2n+6.&
\end{eqnarray}

%
%
%
%
%
%
%
Now we can summarize the above in the following proposition.
\begin{proposition}  \label{prop:basisH4}
Let $n \ge 3$. Then $\{ \Xi_1, \ldots, \Xi_6 \}$ is a linear 
basis of $H_4(\pms(n); \C)$.
\end{proposition}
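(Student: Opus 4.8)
The plan is to show that the six classes $\Xi_1,\dots,\Xi_6$ are linearly independent in $H_4(\pms(n);\C)$, which together with the fact that $\dim_{\C} H_4(\pms(n);\C)=6$ (coming from the Poincar\'e polynomial \eqref{poin-s}) forces them to be a basis. Since $H^4(\pms(n);\Z)$ has the six classes $\epsilon^2,\ \epsilon\delta,\ \delta^2,\ \tau_0,\ \tau_1,\ \tau_2$ of \eqref{sixclass} as a $\Z$-basis when $n\ge 3$, the pairing $H_4(\pms(n);\C)\times H^4(\pms(n);\C)\to\C$ is perfect, and linear independence of $\{\Xi_1,\dots,\Xi_6\}$ is equivalent to the non-vanishing of the $6\times 6$ matrix of intersection numbers $\big(\,c\cdot\Xi_j\,\big)$ as $c$ ranges over $\epsilon^2,\epsilon\delta,\delta^2,\tau_0,\tau_1,\tau_2$ and $j=1,\dots,6$.

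First I would assemble this matrix: its rows (indexed by $\Xi_1,\dots,\Xi_6$) are precisely the six vectors of intersection numbers already computed in \eqref{inter-Xi_1}, \eqref{inter-Xi_2}, \eqref{inter-Xi_3}, \eqref{inter-Xi_4}, \eqref{inter-Xi_5}, \eqref{inter-Xi_6}. Thus the entire content of the proposition reduces to checking that the determinant of this explicit matrix, whose entries are polynomials in $n$, is a nonzero polynomial in $n$ that does not vanish for any integer $n\ge 3$. I would compute this determinant by expansion — exploiting the many zeros (for instance $\epsilon^2\cdot\Xi_1=\epsilon^2\cdot\Xi_3=\epsilon^2\cdot\Xi_5=\epsilon^2\cdot\Xi_6=0$, and the several vanishing $\epsilon\delta$ and $\tau$ entries for $\Xi_5$ and $\Xi_6$) to reduce the size of the cofactor expansions. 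The vanishing pattern in the $\Xi_5$ and $\Xi_6$ rows in particular means the determinant essentially factors through a smaller minor, which makes the computation manageable.

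The one genuine subtlety to address carefully is the base-change issue flagged in Remark~\ref{base-change}~(ii): the surfaces $\Xi_3$ and $\Xi_4$ were described via a finite morphism factoring through $\iota\colon\Gamma\to\ms(n-1)$, which need not be an embedding for small $n$. By Remark~\ref{base-change}~(i), however, all the intersection numbers $c\cdot\Xi_j$ may be computed by pulling back the Chern classes of $R^1\pi_{1*}(\mathcal E\otimes\pi_2^*\mathcal O_X(-k\ell))$ to the relevant parameter variety $Y$ and using the projection formula, so the numbers in \eqref{inter-Xi_3} and \eqref{inter-Xi_4} are valid as intersection numbers against the pushed-forward cycles $\Xi_3,\Xi_4\in H_4(\pms(n);\C)$ regardless of injectivity of $\iota$. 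I would make this point explicit so that the determinant computation is unambiguously a statement about honest homology classes.

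The main obstacle I anticipate is purely computational bookkeeping: verifying that the resulting determinant — a polynomial in $n$ — is not identically zero and has no integer root $n\ge 3$. I expect that after using the zero entries to collapse the expansion, the determinant will come out to a product of a nonzero numerical constant with low-degree factors in $n$ such as powers of $(n-2)$ or $(n-1)$ times an irreducible quadratic with no integer roots $\ge 3$; one then checks the finitely many small cases directly if needed. No deep geometry enters beyond what is already established; the proposition is the linear-algebra consolidation of Sections~\ref{subsect_Xi12}--\ref{subsect_Xi_6}.
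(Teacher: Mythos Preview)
Your approach is correct and matches the paper's own proof: assemble the $6\times 6$ intersection matrix between $\Xi_1,\dots,\Xi_6$ and $\epsilon^2,\epsilon\delta,\delta^2,\tau_0,\tau_1,\tau_2$ from the computed intersection numbers, observe that its determinant is nonzero for $n\ge 3$, and conclude since $\dim_{\C}H_4(\pms(n);\C)=6$. The paper's proof is in fact terser than yours---it simply asserts the determinant is nonzero without spelling out the cofactor expansion or the base-change caveat---so your added care on those points is fine but not strictly necessary.
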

\begin{proof}
Note from (\ref{inter-Xi_1}), (\ref{inter-Xi_2}),
(\ref{inter-Xi_3}), (\ref{inter-Xi_4}), 
(\ref{inter-Xi_5}) and (\ref{inter-Xi_6})
that the intersection matrix between the classes
$\Xi_1, \ldots, \Xi_6$ and the classes 
$\epsilon^2, \epsilon\cdot
\delta, \delta^2, \tau_0, 
\tau_1, \tau_2$ has a nonzero determinant. 
Since $H_4(\pms(n); \C)$ has dimension $6$,
our result follows.
\end{proof}

\section{\bf The restriction of the obstruction sheaf 
on certain open subset}
\label{sec:obs}
From the previous section, we see that, if we let 
$\pms_*(n) = \mathfrak B_* \cup \ms(n)$, 
then the classes $\Xi_1, \Xi_2, \Xi_3, \Xi_4$ lie in $\pms_*(n)$ while 
$\Xi_5$ and $\Xi_6$ lie in the complement $\pms (n)-\pms_*(n)$.
Since a stable map $[\mu\colon (D, p)\to \pms (n)]$ in $ev_1^{-1}(\Xi_i)
\subset \pms_{0, 1}(\pms (n), d\mathfrak f)$ has $\mu(p)\in \Xi_i$ for 
$1\le i\le 4$, we have $\mu(D)\subset \mathfrak B_*$. In this section, 
we use the geometric construction of $\mathfrak B_*$ in 
Subsect.~\ref{subsect_uhlen} to effectively compute the virtual cycle 
restricted to $ev_1^{-1}(\mathfrak B_*)$. The result will be used to 
compute the Gromov-Witten invariants $\langle \alpha \rangle_{0, d\mathfrak f}$ 
when $\alpha$ is dual to the classes $\Xi_1, \Xi_2, \Xi_3, \Xi_4$. 


Fix $d \ge 1$. Consider the open subset $\mathfrak O_0$ of 
$\pms_{0, 0}(\pms(n), d \mathfrak f)$ consisting of stable maps 
$[\mu \colon D \to \pms(n)]$ such that $\mu(D)\subset \pms_*(n)$. 
Similarly, take the open subset $\mathfrak O_1$ of  
$\pms_{0, 1}(\pms(n), d\mathfrak f)$ consisting of 
stable maps $[\mu\colon (D; p)\to \pms(n)]$ such that 
$\mu(D)\subset \pms_*(n)$. 
Clearly $\mathfrak O_1=f^{-1}_{1, 0}(\mathfrak O_0)$. 
Let $[\mu \colon (D; p) \to \pms(n)] \in \mathfrak O_1$.
Since $\mu(D) = d\mathfrak f$ in $H_2(\pms(n); \Z)$, 
$\mu(D) \subset \mathfrak B_*$ and $\mu(D)$ is a fiber of 
the projection $\pi$. Moreover, the composition $\Psi \circ ev_1$ 
sends the stable map $[\mu\colon (D; p)\to \pms(n)]$ 
to a point in $\ms(n-1) \times X \subset \uc(n)$,
which is independent of the marked point $p$ on $D$. 
Hence $ev_1$ induces a morphism $\phi$ from $\mathfrak O_0$ to 
$\ms(n-1) \times X$. Putting $\rev = ev_1|_{\mathfrak O_1}$ 
and $\rf = f_{1, 0}|_{\mathfrak O_1}$,
we have the following commutative diagram:
\begin{eqnarray}\label{com-diagram2}
\begin{matrix}
{\mathfrak O_1}&{\buildrel{\rev}\over \rightarrow }  &\mathfrak B_*
  &&\\
\quad \downarrow^{\rf}&&\downarrow^{\pi}\\
{\mathfrak O_0}&{\buildrel{\phi}\over \rightarrow }&\ms(n-1) \times X
  &{\buildrel {\w \pi_2}\over  \rightarrow}&X.       
\end{matrix}  
\end{eqnarray}

Note that the fiber $\phi^{-1}(V_1, x)$ over a point 
$(V_1, x) \in \ms(n-1) \times X$ is simply 
$$
\pms_{0, 0}\big (\pi^{-1}(V_1, x), d[\pi^{-1}(V_1, x)])
$$
which is isomorphic to $\pms_{0, 0}(\mathbb P^1, d[\mathbb P^1])$
via the isomorphism $\pi^{-1}(V_1, x) \cong \mathbb P^1$.
Hence the complex dimension of the open subset $\mathfrak O_0
\subset \pms_{0, 0}(\pms(n), d \mathfrak f)$ is equal to 
\begin{eqnarray*}
& &\dim \overline{\frak M}_{0, 0}(\mathbb P^1, d[\mathbb P^1])
   + \dim \ms(n-1) + \dim X  \\
&=&(2d-2) + [4(n-1)-4]+ 2 \\
&=&2d + 4n - 8.
\end{eqnarray*}
Since $K_{\pms(n)}\cdot d\mathfrak f=0$, the expected dimension 
of $\mathfrak O_0 \subset \pms_{0, 0}(\pms(n), d\mathfrak f)$ 
is $(4n-7)$ by (\ref{expected-dim}).
Hence the excess dimension of $\mathfrak O_0$ is $e=(2d-1)$.

Let $\mathcal V$ be the restriction of 
$R^1(f_{1, 0})_*(ev_{1})^*T_{\pms(n)}$ to $\mathfrak O_0$. 

\begin{lemma}\label{obstruction-bundle1} 
{\rm (i)} The sheaf $\mathcal V$ is 
locally free of rank $(2d-1)$;

\medskip
{\rm (ii)} $\mathcal V \cong 
R^1(\rf)_*(\rev)^*\mathcal O_{\mathfrak B_*}(\mathfrak B_*).$
\end{lemma}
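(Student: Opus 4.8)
The plan is to reduce both statements to a cohomology computation along the fibres of $\rf\colon\mathfrak O_1\to\mathfrak O_0$, which is the universal curve over $\mathfrak O_0$: its fibre over $[\mu\colon D\to\pms(n)]$ is $D$. As noted before the lemma, for such a point $\mu(D)$ is a fibre $\mathfrak f$ of the $\Pee^1$-bundle $\pi$, so $\mu$ factors as a degree-$d$ morphism $D\to\mathfrak f\cong\Pee^1$ followed by $\mathfrak f\hookrightarrow\pms(n)$. By Lemma~\ref{fiber}(iii),
\begin{eqnarray*}
\mu^*T_{\pms(n)}\,\cong\,\mathcal O_D^{\oplus(4n-6)}\oplus\mu^*\mathcal O_{\mathfrak f}(-2)\oplus\mu^*\mathcal O_{\mathfrak f}(2).
\end{eqnarray*}
Since $D$ is a connected nodal curve of arithmetic genus $0$ and $\mu^*\mathcal O_{\mathfrak f}(a)$ has non-negative degree on every subcurve of $D$ when $a\ge 0$, standard facts about line bundles on genus-$0$ nodal curves give $H^1(D,\mathcal O_D)=H^1(D,\mu^*\mathcal O_{\mathfrak f}(2))=0$; on the other hand $\mu^*\mathcal O_{\mathfrak f}(-2)$ has negative degree on every non-contracted component, so $H^0(D,\mu^*\mathcal O_{\mathfrak f}(-2))=0$ and hence $h^1(D,\mu^*\mathcal O_{\mathfrak f}(-2))=2d-1$ by Riemann--Roch. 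Thus $h^1(D,\mu^*T_{\pms(n)})=2d-1$ for every point of $\mathfrak O_0$.

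For part (i): since the fibres of $\rf$ have dimension $1$, $R^{\ge 2}(\rf)_*$ vanishes, and the constancy of $h^1$ combined with the theorem on cohomology and base change shows $R^1(\rf)_*(\rev)^*T_{\pms(n)}$ is locally free of rank $2d-1$. This sheaf is $\mathcal V$: as $\mathfrak O_1=f_{1,0}^{-1}(\mathfrak O_0)$ and $\mathfrak O_0\hookrightarrow\pms_{0,0}(\pms(n),d\mathfrak f)$ is flat, restricting $R^1(f_{1,0})_*(ev_1)^*T_{\pms(n)}$ to $\mathfrak O_0$ gives $R^1(\rf)_*(\rev)^*T_{\pms(n)}$ by flat base change.

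For part (ii): $\mathfrak B_*$ is smooth and open in the Cartier divisor $\mathfrak B$, so $N_{\mathfrak B_*/\pms(n)}=\mathcal O_{\pms(n)}(\mathfrak B)|_{\mathfrak B_*}=\mathcal O_{\mathfrak B_*}(\mathfrak B_*)$, and the conormal sequence reads
\begin{eqnarray*}
0\to T_{\mathfrak B_*}\to T_{\pms(n)}|_{\mathfrak B_*}\to\mathcal O_{\mathfrak B_*}(\mathfrak B_*)\to 0.
\end{eqnarray*}
Pulling back by $\rev$ (a sequence of locally free sheaves stays exact) and pushing forward by $\rf$ gives a long exact sequence; since $R^{\ge 2}(\rf)_*=0$, it is enough to prove $R^1(\rf)_*(\rev)^*T_{\mathfrak B_*}=0$, for then $\mathcal V=R^1(\rf)_*(\rev)^*T_{\pms(n)}\cong R^1(\rf)_*(\rev)^*\mathcal O_{\mathfrak B_*}(\mathfrak B_*)$. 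To get this vanishing, restrict the sequence to a fibre $\mathfrak f$: by Lemma~\ref{fiber}(i), $\mathcal O_{\mathfrak B_*}(\mathfrak B_*)|_{\mathfrak f}\cong\mathcal O_{\mathfrak f}(\mathfrak f\cdot\mathfrak B)=\mathcal O_{\mathfrak f}(-2)$, and since $\Hom(\mathcal O_{\mathfrak f},\mathcal O_{\mathfrak f}(-2))=\Hom(\mathcal O_{\mathfrak f}(2),\mathcal O_{\mathfrak f}(-2))=0$, the surjection $T_{\pms(n)}|_{\mathfrak f}\to\mathcal O_{\mathfrak f}(-2)$ must be the projection onto the $\mathcal O_{\mathfrak f}(-2)$ summand of Lemma~\ref{fiber}(iii); hence $T_{\mathfrak B_*}|_{\mathfrak f}\cong\mathcal O_{\mathfrak f}^{\oplus(4n-6)}\oplus\mathcal O_{\mathfrak f}(2)$. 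Pulling back by $\mu$ and using $H^1(D,\mathcal O_D)=H^1(D,\mu^*\mathcal O_{\mathfrak f}(2))=0$ yields $H^1(D,\mu^*(T_{\mathfrak B_*}|_{\mathfrak f}))=0$ for every point of $\mathfrak O_0$; being the top nonzero direct image, $R^1(\rf)_*(\rev)^*T_{\mathfrak B_*}=0$ by base change.

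The one genuinely substantive point is the identification $T_{\mathfrak B_*}|_{\mathfrak f}\cong\mathcal O_{\mathfrak f}^{\oplus(4n-6)}\oplus\mathcal O_{\mathfrak f}(2)$: it says that the self-intersection class $\mathfrak B_*|_{\mathfrak B_*}$ accounts exactly for the negative summand of $T_{\pms(n)}|_{\mathfrak f}$, which is what makes the whole obstruction sheaf come from the conormal direction of $\mathfrak B_*$. Everything else is Riemann--Roch on genus-$0$ nodal curves and the cohomology-and-base-change formalism; a minor point worth checking is that the degree and vanishing statements for $\mu^*\mathcal O_{\mathfrak f}(a)$ remain valid for reducible, possibly partially contracted domains $D$, not only for $D\cong\Pee^1$.
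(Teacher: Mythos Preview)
Your argument is correct and follows essentially the same route as the paper: fibrewise computation of $H^1(D,\mu^*T_{\pms(n)})$ via Lemma~\ref{fiber}(iii) for part (i), and the normal sequence of $\mathfrak B_*\subset\pms(n)$ together with $R^1(\rf)_*(\rev)^*T_{\mathfrak B_*}=0$ for part (ii). The only cosmetic difference is that the paper identifies $T_{\mathfrak B_*}|_{\mathfrak f}\cong\mathcal O_{\mathfrak f}^{\oplus(4n-6)}\oplus\mathcal O_{\mathfrak f}(2)$ directly from the triviality of the normal bundle of a fibre of $\pi$, whereas you deduce it from the $\Hom$-vanishing that forces the surjection $T_{\pms(n)}|_{\mathfrak f}\to\mathcal O_{\mathfrak f}(-2)$ to be projection onto the negative summand.
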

\begin{demo}{Proof}
(i) Take a stable map $u=[\mu\colon D \to \pms(n)]$ in $\mathfrak O_0$, 
and consider
$$H^1(f^{-1}_{1, 0}(u), (ev_1^*T_{\pms(n)})|_{f^{-1}_{1, 0}(u)}) 
\cong H^1(D, \mu^*T_{\pms(n)}).$$
Since $\mu(D) \cong \Pee^1$ is a fiber of the projection $\pi$, 
we see from Lemma~\ref{fiber}~(iii) that
$$
T_{\pms(n)}|_{\mu(D)} = \mathcal O_{\mu(D)}^{\oplus (4n-6)} \oplus 
\mathcal O_{\mu(D)}(-2) \oplus \mathcal O_{\mu(D)}(2).
$$ 
Thus $H^1(D, \mu^*T_{\pms(n)})\cong H^1(D, \mu^*\mathcal O_{\mu(D)}(-2))$
whose dimension equals the excess dimension $e=(2d-1)$. 
Therefore, the restriction $\mathcal V$ of 
$R^1(f_{1, 0})_*(ev_{1})^*T_{\pms(n)}$ to $\mathfrak O_0$
is a locally free sheaf of rank $(2d-1)$.

(ii) Since $ev_1(\mathfrak O_1)\subset \mathfrak B_*$, we have 
$((ev_1)^*T_{\pms(n)})|_{\mathfrak O_1}=
(\rev)^*(T_{\pms_*(n)}|_{\mathfrak B_*})$ and 
\begin{eqnarray*}
   \mathcal V
&=&(R^1(f_{1, 0})_*(ev_1)^*T_{\pms(n)})|_{\mathfrak O_0} 
=R^1(\tilde f_{1, 0})_*
   \big(((ev_1)^*T_{\pms(n)})|_{\mathfrak O_1}\big)  \\
&=&R^1(\tilde f_{1, 0})_*(\rev)^*(T_{\pms(n)}|_{\mathfrak B_*}).
\end{eqnarray*}
Since $\mathfrak B_*$ is smooth of codimension-$1$ in $\pms(n)$, 
we obtain the exact sequence
\begin{eqnarray}  \label{ex-tangent}
0 \to T_{\mathfrak B_*}\to T_{\pms(n)}|_{\mathfrak B_*} \to 
\mathcal O_{\mathfrak B_*}(\mathfrak B_*)\to 0.
\end{eqnarray}
Applying $(\rev)^*$ and $(\rf)_*$ to 
the exact sequence (\ref{ex-tangent}), we get 
\begin{eqnarray}  \label{r1rf}
R^1 (\rf)_*(\rev)^*T_{\mathfrak B_*} \to \mathcal V \to 
R^1 (\rf)_*(\rev)^*\mathcal O_{\mathfrak B_*}(\mathfrak B_*) \to 0.
\end{eqnarray}
where we have used $R^2(\rf)_*(\rev)^*T_{\mathfrak B_*}=0$
since $\rf$ is of relative dimension $1$.

If $[\mu\colon D \to \pms(n)]$ is a stable map in $\mathfrak O_0$, 
then $\mu(D)$ is a fiber of the projection $\pi$ in (\ref{def-pi}). 
Hence the normal bundle of $\mu(D)$ in $\mathfrak B_*$ is trivial. 
Therefore we have
$
T_{\mathfrak B_*}|_{\mu(D)} \cong 
\mathcal O_{\mu(D)}^{\oplus(4n-6)} \oplus \mathcal O_{\mu(D)}(2)$,
and 
 $H^1(D, \mu^*T_{\mathfrak B_*}) \cong 
H^1(D, \mu^*(\mathcal O_{\mu(D)}^{\oplus(4n-6)} \oplus 
\mathcal O_{\mu(D)}(2)))=0$. It follows that
$R^1 (\rf)_*(\rev)^*T_{\mathfrak B_*}=0$. 
\end{demo}


\begin{proposition}    \label{obstruction-bundle2}
Put $\mathcal L = \det (\mathcal E_{n-1}^0)$. 
Then $\mathcal V$ sits in the exact sequence 
\begin{eqnarray*}
&0 
\to \phi^*\big (\mathcal L^{-1} \otimes
 \w \pi_1^*\mathcal  O_{\ms(n-1)}(\mathfrak D_{n-1})
 \otimes \w \pi_2^*\mathcal  O_X(2\ell) \big )
 \to \mathcal V&   \nonumber  \\
&\to R^1 (\rf)_*\rev^*\big (\pi^*  (\mathcal E_{n-1}^0)^*  
 \otimes \mathcal O_{\mathfrak B_*}(-1)\big ) \otimes 
 \phi^* \big (\w \pi_1^*\mathcal  O_{\ms(n-1)}
 (\mathfrak D_{n-1}) \otimes \w \pi_2^*\mathcal  O_X(2\ell)
 \big ) \to 0.& 
\end{eqnarray*}
\end{proposition}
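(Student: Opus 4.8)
The plan is to read the sequence off from Lemma~\ref{obstruction-bundle1}(ii) and Lemma~\ref{B*B*} by resolving the line bundle $\mathcal O_{\mathfrak B_*}(\mathfrak B_*)$ on the $\Pee^1$-bundle $\mathfrak B_*=\Pee(\mathcal E_{n-1}^0)$, and then pushing forward along $\rf$. Throughout write $\mathcal N=\w\pi_1^*\mathcal O_{\ms(n-1)}(\mathfrak D_{n-1})\otimes\w\pi_2^*\mathcal O_X(2\ell)$, a line bundle on $\ms(n-1)\times X$. By Lemma~\ref{obstruction-bundle1}(ii) we have $\mathcal V\cong R^1(\rf)_*\rev^*\mathcal O_{\mathfrak B_*}(\mathfrak B_*)$, and by Lemma~\ref{B*B*} we have $\mathcal O_{\mathfrak B_*}(\mathfrak B_*)\cong\mathcal O_{\mathfrak B_*}(-2)\otimes\pi^*\mathcal N$. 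So the statement reduces to one short exact sequence of bundles on $\mathfrak B_*$ plus a push-forward computation.

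First I would produce the sequence on $\mathfrak B_*$. Restricting the universal-sheaf sequence (\ref{B*-univ}) to the subscheme $(\pi\times\text{Id}_X)^{-1}(\ms(n-1)\times\Delta_X)$, which $\pi_1$ carries isomorphically onto $\mathfrak B_*$, exhibits $\pi^*\mathcal E_{n-1}^0\to\mathcal O_{\mathfrak B_*}(1)$ as the tautological quotient; hence, with $\mathcal L=\det(\mathcal E_{n-1}^0)$,
\[
0\to\pi^*\mathcal L\otimes\mathcal O_{\mathfrak B_*}(-1)\to\pi^*\mathcal E_{n-1}^0\to\mathcal O_{\mathfrak B_*}(1)\to 0 .
\]
Dualizing and twisting by $\mathcal O_{\mathfrak B_*}(-1)$ gives
\[
0\to\mathcal O_{\mathfrak B_*}(-2)\to\pi^*(\mathcal E_{n-1}^0)^*\otimes\mathcal O_{\mathfrak B_*}(-1)\to\pi^*\mathcal L^{-1}\to 0 ,
\]
and twisting once more by $\pi^*\mathcal N$ turns the left term into $\mathcal O_{\mathfrak B_*}(\mathfrak B_*)$ by Lemma~\ref{B*B*}.

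Next I would pull this sequence of vector bundles back by $\rev$ (so exactness survives) and apply $R^\bullet(\rf)_*$, using $\pi\circ\rev=\phi\circ\rf$ from (\ref{com-diagram2}) to rewrite the pulled-back base factors as $\rf^*\phi^*\mathcal L^{-1}$ and $\rf^*\phi^*\mathcal N$, and the projection formula (legitimate since $\phi^*\mathcal N$ is a line bundle). The resulting long exact sequence collapses to the asserted three-term sequence once one checks: (a) $(\rf)_*\rev^*\mathcal O_{\mathfrak B_*}(\mathfrak B_*)=0$; (b) $(\rf)_*\rev^*\big(\pi^*(\mathcal E_{n-1}^0)^*\otimes\mathcal O_{\mathfrak B_*}(-1)\big)=0$; (c) $(\rf)_*\mathcal O_{\mathfrak O_1}=\mathcal O_{\mathfrak O_0}$ and $R^1(\rf)_*\mathcal O_{\mathfrak O_1}=0$. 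Statement (c) is immediate because $\rf$ has connected genus-$0$ fibers, and it identifies the two surviving outer terms of the long exact sequence with $\phi^*(\mathcal L^{-1}\otimes\mathcal N)$ and with $R^1(\rf)_*\rev^*\big(\pi^*(\mathcal E_{n-1}^0)^*\otimes\mathcal O_{\mathfrak B_*}(-1)\big)\otimes\phi^*\mathcal N$. For (a) and (b) I restrict to a fiber $[\mu\colon D\to\pms(n)]$ of $\rf$: the map $\mu$ factors through a fiber $\Pee^1$ of $\pi$ with degree $d$, along which $\mathcal O_{\mathfrak B_*}(\mathfrak B_*)$ restricts to $\mathcal O_{\Pee^1}(-2)$ by Lemma~\ref{B*B*} and $\pi^*(\mathcal E_{n-1}^0)^*$ is trivial, so the two sheaves pull back to $\mu^*\mathcal O_{\Pee^1}(-2)$ and $\big(\mu^*\mathcal O_{\Pee^1}(-1)\big)^{\oplus2}$. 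The standard vanishing $H^0\big(D,\mu^*\mathcal O_{\Pee^1}(-1)\big)=0$ for a genus-$0$ stable map of degree $d\ge1$ gives (b), and, since $\mu^*\mathcal O_{\Pee^1}(-2)$ embeds into $\mu^*\mathcal O_{\Pee^1}(-1)$ (twist down by the pullback of a general section of $\mathcal O_{\Pee^1}(1)$), also (a); alternatively (a) follows from $\chi\big(D,\mu^*\mathcal O_{\Pee^1}(-2)\big)=1-2d$ together with the rank-$(2d-1)$ assertion of Lemma~\ref{obstruction-bundle1}(i). With (a)--(c) the long exact sequence is precisely the displayed short exact sequence.

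The main obstacle is the vanishing of the push-forwards (a) and (b): because $D$ is a nodal genus-$0$ curve which may have components contracted by $\pi$, the naive implication "negative total degree $\Rightarrow$ no global sections'' is false, so one must lean on the standard cohomology vanishing for $\mathcal O_{\Pee^1}(-1)$ pulled back along genus-$0$ stable maps (or, for (a), on the Euler-characteristic bookkeeping supplied by Lemma~\ref{obstruction-bundle1}). Everything else — the tautological sequence, Lemma~\ref{B*B*}, the commutativity of (\ref{com-diagram2}), and the projection formula — is routine.
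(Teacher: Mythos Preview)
Your argument is correct and essentially identical to the paper's: both produce the tautological sequence on $\mathfrak B_*$, twist to obtain $0\to\mathcal O_{\mathfrak B_*}(-2)\to\pi^*(\mathcal E_{n-1}^0)^*\otimes\mathcal O_{\mathfrak B_*}(-1)\to\pi^*\mathcal L^{-1}\to 0$, pull back by $\rev$, push forward by $\rf$, and invoke the same vanishings (your (b) and (c) are exactly what the paper uses; your (a) is not listed separately there since it is forced by (b) in the long exact sequence). The only cosmetic difference is that the paper tensors with $\phi^*\mathcal N$ at the very end rather than twisting by $\pi^*\mathcal N$ on $\mathfrak B_*$ beforehand.
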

\begin{proof}
Recall that $\mathfrak B_* = \Pee(\mathcal E_{n-1}^0)$. 
The kernel of the tautological surjection $\pi^*\mathcal E_{n-1}^0
\to \mathcal O_{\mathfrak B_*}(1) \to 0$ is a line bundle. 
By comparing the first Chern classes, we get 
\begin{eqnarray*}
0 \to \pi^*\mathcal L \otimes \mathcal O_{\mathfrak B_*}(-1) \to
\pi^*\mathcal E_{n-1}^0 \to \mathcal O_{\mathfrak B_*}(1) \to 0.
\end{eqnarray*}
Tensoring with $\pi^*\mathcal L^{-1} \otimes
\mathcal O_{\mathfrak B_*}(-1)$, we obtain the exact sequence
\begin{eqnarray*}
0 \to \mathcal O_{\mathfrak B_*}(-2) \to \pi^* \big ( 
\mathcal E_{n-1}^0  \otimes \mathcal L^{-1} \big ) \otimes
\mathcal O_{\mathfrak B_*}(-1) \to \pi^*\mathcal L^{-1} \to 0. 
\end{eqnarray*} 
Note that $\mathcal E_{n-1}^0  \otimes \mathcal L^{-1} \cong 
(\mathcal E_{n-1}^0)^*$. 
Applying $\rev^*$ to the above exact sequence yields
\begin{eqnarray*}
0 \to \rev^*\mathcal O_{\mathfrak B_*}(-2) \to \rev^*
\big (\pi^*  (\mathcal E_{n-1}^0)^*  \otimes
\mathcal O_{\mathfrak B_*}(-1)\big ) 
\to (\pi \circ \rev)^*\mathcal L^{-1} \to 0.
\end{eqnarray*}
By (\ref{com-diagram2}), $\pi \circ \rev = \phi \circ \rf$. 
Rewriting the 3rd term, we have
\begin{eqnarray}\label{ex-seq4}
0 \to \rev^*\mathcal O_{\mathfrak B_*}(-2) \to \rev^*
\big (\pi^*  (\mathcal E_{n-1}^0)^*  \otimes
\mathcal O_{\mathfrak B_*}(-1)\big )   
\to \rf^* \phi^*\mathcal L^{-1} \to 0.
\end{eqnarray}

Applying the functor $(\rf)_*$ to (\ref{ex-seq4}), we get 
the exact sequence 
\begin{eqnarray*}
0 
&\to& \phi^*\mathcal L^{-1}
 \to R^1(\rf)_*(\rev)^*\mathcal O_{\mathfrak B_*}(-2) \\
&\to& R^1 (\rf)_*\rev^*\big (\pi^*  (\mathcal E_{n-1}^0)^*  \otimes
 \mathcal O_{\mathfrak B_*}(-1)\big ) \to 0
\end{eqnarray*}
where we have used the projection formula, 
$(\rf)_*\mathcal O_{\mathfrak O_1} \cong 
\mathcal O_{\mathfrak O_0}$, and 
\begin{eqnarray*}
R^1(\rf)_*\mathcal O_{\mathfrak O_1} = 0, \qquad
(\rf)_*\rev^*\big (
\pi^*(\mathcal E_{n-1}^0)^* \otimes 
\mathcal O_{\mathfrak B_*}(-1)\big ) = 0.
\end{eqnarray*}
By Lemma~\ref{obstruction-bundle1} and Lemma~\ref{B*B*},
we obtain the desired exact sequence for $\mathcal V$.
\end{proof}

\begin{remark}\label{remark1}
Fix a point $(V_1, x) \in \ms(n-1) \times X$. Via 
$
\phi^{-1}(V_1, x) \cong \pms_{0, 0}(\mathbb P^1, d[\mathbb P^1]),
$
the restriction of 
$R^1 (\rf)_*\rev^*\big (\pi^* (\mathcal E_1^0)^* \otimes
\mathcal O_{\mathfrak B_*}(-1)\big )$ to $\phi^{-1}(V_1, x)$ 
is isomorphic to
$$
R^1 (f_{1, 0})_*(ev_1)^*(\mathcal O_{\mathbb P^1}(-1) \oplus 
\mathcal O_{\mathbb P^1}(-1))
$$
where by abusing notations, we still use $f_{1, 0}$ and $ev_1$ to 
denote the forgetful map and the evaluation map from 
$\overline{\frak M}_{0, 1}(\mathbb P^1, d[\mathbb P^1])$ to
$\overline{\frak M}_{0, 0}(\mathbb P^1, d[\mathbb P^1])$ and 
$\mathbb P^1$ respectively.
\end{remark}

\section{\bf The virtual fundamental class $\big [\overline 
{\frak M}_{0, 1}(\pms(n), d\mathfrak f) \big ]^{\text{vir}}$}
\label{sec:virtual}

As we saw in the construction of the classes $\Xi_i$, 
$\Xi_5$ and $\Xi_6$ don't lie in $\mathfrak B_*$. The method to 
compute the virtual cycle restricted to $ev_1^{-1}(\Xi_i)$ in 
the previous section won't work for $i = 5, 6$. In this section, 
we shall employ the localization method of Kiem-Li to find 
a sufficiently small closed subset of $\pms (n)$ containing 
the image of the virtual cycle under the evaluation map $ev_1$. 
The result will be used to show the vanishing of the Gromov-Witten 
invariants $\langle \alpha \rangle_{0, d\mathfrak f}$ when 
$\alpha$ is dual to $\Xi_5, \Xi_6$. 

Let $X = \Pee^2$, and let $C_0 \subset X$ be a smooth cubic curve.
Recall that the Zariski tangent space of $\pms(n)$ at $V \in \pms(n)$
is canonically $\Ext^1(V, V)$. Therefore, the natural map 
$K_X = \mathcal O_X(-C_0) \to \mathcal O_X$ induces a meromorphic 
$2$-form $\Theta$ on the moduli space $\pms(n)$ given point-wisely by
\begin{eqnarray}  \label{def-Theta}
\Ext^1(V, V)^* \cong \Ext^1(V, V \otimes K_X) \to \Ext^1(V, V).
\end{eqnarray}
Note that $\Theta$ is holomorphic at $V \in \pms(n)$ if 
(\ref{def-Theta}) is an isomorphism.

The constructions in \cite{K-L, L-L} show that the meromorphic $2$-form 
$\Theta$ on the moduli space $\pms(n)$ induces a meromorphic homomorphism:
\begin{eqnarray}  \label{def-eta}
\eta: \mathcal E \to \mathcal O_{}
\end{eqnarray}
over $\overline {\frak M}_{0, 1}(\pms(n), d\mathfrak f)$. Here 
$\mathcal E$ is a suitable bundle on 
$\overline {\frak M}_{0, 1}(\pms(n), d\mathfrak f)$ such that 
$$
\big [\overline {\frak M}_{0, 1}(\pms(n), d\mathfrak f) 
\big ]^{\text{vir}} \in H_{2(4n-6)}(\Lambda) 
$$
where $\Lambda \subset
\overline {\frak M}_{0, 1}(\pms(n), d\mathfrak f)$ is the degeneracy 
loci consisting of points at which either the map $\eta$ is undefined 
or not surjective. 

Next, we analyze the degeneracy loci $\Lambda$.
Let $[\mu: (D; p) \to \pms(n)] \in \Lambda$. 
Since $\mu_*[D] = d\mathfrak f$, $\mu(D)$ is 
contracted to a point by the Gieseker-Uhlenbeck morphism $\Psi$.

\begin{lemma}  \label{loci}
Let $[\mu: (D; p) \to \pms(n)] \in \Lambda$, and put
\begin{eqnarray*}  
(\Psi \circ \mu)(D) = (V_k; \zeta) \in 
\ms(n-k) \times \text{\rm Sym}^k(X) \subset \uc(n)
\end{eqnarray*}
for some $1 \le k \le (n-1)$. Then, either 
$\Supp(\zeta) \cap C_0 \ne \emptyset$ or $V_k|_{C_0}$ is not stable.
\end{lemma}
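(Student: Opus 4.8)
The plan is to argue by contradiction: suppose $[\mu: (D;p) \to \pms(n)] \in \Lambda$ with $(\Psi \circ \mu)(D) = (V_k; \zeta)$ such that $\Supp(\zeta) \cap C_0 = \emptyset$ \emph{and} $V_k|_{C_0}$ is stable, and show that then the meromorphic homomorphism $\eta$ from (\ref{def-eta}) is both defined and surjective at $[\mu]$, contradicting membership in $\Lambda$. The key observation is that $\eta$ is built from the meromorphic $2$-form $\Theta$ of (\ref{def-Theta}), so it suffices to understand where $\Theta$ degenerates along the image curve $\mu(D)$, which is contracted by $\Psi$ to the single point $(V_k;\zeta)$.

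First I would recall that every sheaf $V$ parametrized by a point of $\mu(D)$ has the same double-dual $V^{**} = V_k$ and the same Uhlenbeck cycle $\zeta$; in particular each such $V$ sits in an exact sequence $0 \to V \to V_k \to Q \to 0$ with $Q$ supported on $\Supp(\zeta)$, which is disjoint from $C_0$. Restricting to $C_0$ and using that $\mathcal{T}or_1(Q, \mathcal O_{C_0}) = 0$ and $Q|_{C_0} = 0$ (since $\Supp(Q) \cap C_0 = \emptyset$), I get $V|_{C_0} \cong V_k|_{C_0}$ for every $V \in \mu(D)$. So $V|_{C_0}$ is stable for all $V$ in the contracted curve. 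Next I would analyze when (\ref{def-Theta}) fails to be an isomorphism: the map $\Theta$ at $V$ is the composition $\Ext^1(V,V\otimes K_X) \to \Ext^1(V,V)$ induced by $K_X = \mathcal O_X(-C_0) \hookrightarrow \mathcal O_X$, whose cokernel is governed by $\Ext^1(V, V|_{C_0})$ via the long exact sequence associated to $0 \to V\otimes K_X \to V \to V|_{C_0} \to 0$. A Serre-duality / stability argument shows that if $V|_{C_0}$ is a stable bundle on the curve $C_0$, then the relevant Hom and Ext groups on $C_0$ either vanish or are controlled tightly enough that $\Theta$ is nondegenerate at $V$ — this is exactly the kind of computation carried out in \cite{K-L, L-L}, where holomorphic nondegeneracy of the induced $2$-form forces the degeneracy locus to be empty along such curves.

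Having shown $\Theta$ is holomorphic and nondegenerate at every $V \in \mu(D)$, I would then invoke the construction of $\eta: \mathcal E \to \mathcal O$ from \cite{K-L, L-L}: the cosection $\eta$ over $\pms_{0,1}(\pms(n), d\mathfrak f)$ is assembled from $\Theta$ pulled back along the universal stable map, and nondegeneracy of $\Theta$ along the image of a point $[\mu]$ implies $\eta$ is defined and surjective at that point. Concretely, surjectivity of $\eta$ at $[\mu:(D;p)\to\pms(n)]$ is equivalent to the $2$-form $\Theta$ being nondegenerate (as a holomorphic symplectic-type form) in the directions along which $\mu(D)$ deforms and twists inside $\pms(n)$, and these directions all live in the locus where $V|_{C_0}$ is stable, hence where $\Theta$ is nondegenerate. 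Therefore $[\mu] \notin \Lambda$, contradicting our assumption, and the lemma follows.

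The main obstacle I anticipate is the second step: making precise the passage from "$V|_{C_0}$ stable for all $V$ on the contracted curve" to "$\Theta$ nondegenerate, hence $\eta$ surjective, at $[\mu]$." This requires carefully tracking the obstruction-theory bundle $\mathcal E$ and the cosection $\eta$ from the Kiem--Li localization framework, and in particular checking that the only source of degeneracy of $\Theta$ restricted to the deformations of the stable map is the failure of (\ref{def-Theta}) to be an isomorphism at some $V \in \mu(D)$ — equivalently, that the relevant higher direct images vanish when $V|_{C_0}$ is stable and $\Supp(\zeta)\cap C_0 = \emptyset$. The cohomology-vanishing input (a form of Serre duality on $C_0$ combined with stability) is routine, but bookkeeping the identification of $\eta$'s degeneracy locus with the non-isomorphism locus of $\Theta$ along stable maps is where the real work lies.
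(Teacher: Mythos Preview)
Your approach is the paper's: the contradiction setup, the identification $V|_{C_0}\cong V_k|_{C_0}$ for every $V\in\mu(D)$, the long exact sequence coming from $0\to V\otimes K_X\to V\to V|_{C_0}\to 0$, and the appeal to the Kiem--Li cosection criterion all match.

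The obstacle you flag at the end is smaller than you fear, and the paper handles it cleanly in two strokes. First, rather than controlling the cokernel via $\Ext^1(V,V|_{C_0})$, it computes the $\Hom$ terms at the start of the long exact sequence: stability of $V$ gives $\Hom(V,V\otimes K_X)=0$ and $\Hom(V,V)\cong\C$, while stability of $V_k|_{C_0}$ gives $\Hom(V,V|_{C_0})\cong\Hom(V_k|_{C_0},V_k|_{C_0})\cong\C$; hence $\Hom(V,V)\to\Hom(V,V|_{C_0})$ is onto, so $\Ext^1(V,V\otimes K_X)\hookrightarrow\Ext^1(V,V)$, and equality of dimensions (Serre duality) makes it an isomorphism. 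Second, once $\Theta$ is an \emph{isomorphism} at every $V\in\mu(D)$, its null space is zero along $\mu(D)$; since $\mu$ is not constant, the differential $\mu_*:T_{D_{\mathrm{reg}}}\to T_{\pms(n)}$ is nonzero somewhere and therefore cannot land in that (trivial) null space. That is exactly the hypothesis of the Kiem--Li vanishing criterion, and surjectivity of $\eta$ at $[\mu]$ follows directly --- no further obstruction-bundle bookkeeping is required.
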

\begin{proof}
Assume that $\Supp(\zeta) \cap C_0 = \emptyset$ and $V_k|_{C_0}$ is stable.
We will draw a contradiction by showing that $\eta$ is both defined and
surjective at $[\mu: (D; p) \to \pms(n)]$.

Note that $\eta$ is defined at $[\mu: (D; p) \to \pms(n)]$ if 
$\Theta$ is holomorphic along $\mu(D)$. Let $V \in \mu(D)$. Then, 
$V^{**} = V_k$ and we have 
 $0 \to V \to V_k \to Q \to 0
$
where $Q$ is a torsion sheaf with $\sum_{x \in X} h^0(X, Q_x) \, x = \zeta$.
So $V|_{C_0} \cong V_k|_{C_0}$ is locally free and stable.
From $0 \to K_X \to \mathcal O_X \to \mathcal O_{C_0} \to 0$, we get
$
0 \to V \otimes K_X \to V \to V|_{C_0} \to 0$.
Applying the functor $\text{Hom}(V, \cdot)$, we obtain a long exact sequence:
\begin{eqnarray*}  
&0 \to \text{Hom}(V, V \otimes K_X) \to \text{Hom}(V, V) \to 
   \text{Hom}(V, V|_{C_0})& \\
&\to \text{Ext}^1(V, V \otimes K_X) 
   \to \text{Ext}^1(V, V).&
\end{eqnarray*}
Since $V$ and $V_k|_{C_0}$ are stable, we have $\text{Hom}(V, V)
\cong \C$, $\text{Hom}(V, V \otimes K_X) = 0$ and $\text{Hom}(V, 
V|_{C_0}) \cong \text{Hom}(V_k|_{C_0}, V_k|_{C_0}) \cong \C$.
The above exact sequence is simplified to
$$
0 \to \text{Ext}^1(V, V \otimes K_X) \to \text{Ext}^1(V, V).
$$
Since $\dim \text{Ext}^1(V, V \otimes K_X) = \dim \text{Ext}^1(V, V)$,
we obtain an isomorphism 
$$
\text{Ext}^1(V, V \otimes K_X) \cong \text{Ext}^1(V, V).
$$
Hence the meromorphic $2$-form $\Theta$ is defined at $V \in \mu(D)$.
This proves that $\Theta$ is holomorphic along $\mu(D)$.
So $\eta$ is defined at $[\mu: (D; p) \to \pms(n)]$.

The above argument also shows that $\Theta|_{\mu(D)}$ is 
an isomorphism. Since $\mu$ is not a constant map, the image of
$\mu_*: T_{D_{\text{reg}}} \to T_{\pms(n)}$ does not lie in the
null space of $\Theta: T_{\pms(n)} \dasharrow \big ( T_{\pms(n)} \big )^*$,
where $D_{\text{reg}}$ denotes the smooth part of $D$.  
By the vanishing criterion in \cite{K-L}, $\eta$ is surjective at
$[\mu: (D; p) \to \pms(n)]$.
\end{proof}

\begin{lemma}  \label{stability-res}
Let $C_0 \subset X = \Pee^2$ be a smooth cubic curve. Let $n \ge 1$,
and let $V \in \pms(n)$ be generic. Then, the restriction 
$V|_{C_0}$ is stable.
\end{lemma}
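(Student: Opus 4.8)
The plan is to reduce the statement to producing a single good example and then to control the restricted extension class of that example.

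\emph{Reductions.} Since the boundary $\mathfrak B=\pms(n)-\ms(n)$ is a proper closed subset (a divisor, by \eqref{cano-bdry}), a generic $V\in\pms(n)$ is a vector bundle, so $V|_{C_0}$ is a rank-$2$ vector bundle on the smooth plane cubic $C_0$, which is an elliptic curve, of degree $c_1(V)\cdot C_0=(-\ell)\cdot 3\ell=-3$. As $-3$ is odd, such a bundle on $C_0$ is stable iff semistable iff indecomposable (Atiyah), equivalently iff it has no sub-line-bundle of degree $\ge -1$. Moreover ``$V|_{C_0}$ stable'' is open: if $V_0\in\pms(n)$ is such that the universal sheaf $\mathcal E$ on $\pms(n)\times X$ is locally free near $\{[V_0]\}\times C_0$ and $V_0|_{C_0}$ is stable, then $\mathcal E|_{U\times C_0}$ is a flat family of vector bundles on $C_0$ over an open $U\ni[V_0]$ with stable member over $[V_0]$, hence stable over a smaller open set. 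Since $\pms(n)$ is irreducible, it suffices to exhibit one such $V_0$.

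\emph{The case $n\ge 2$.} Take $\xi\subset X$ to be $n$ general points disjoint from $C_0$ and $V_0$ a general nontrivial extension $0\to\mathcal O_X(-\ell)\to V_0\to I_\xi\to 0$; then $V_0$ is locally free (Serre construction, the Cayley--Bacharach condition being vacuous since $|\mathcal O_X(-2\ell)|=\emptyset$), stable, hence $V_0\in\pms(n)$, and $\mathcal E$ is locally free near $\{[V_0]\}\times C_0$ because $\xi\cap C_0=\emptyset$. Restricting the defining sequence to $C_0$ (using $I_\xi|_{C_0}\cong\mathcal O_{C_0}$) yields $0\to\mathcal O_X(-\ell)|_{C_0}\to V_0|_{C_0}\to\mathcal O_{C_0}\to 0$ with class $e'\in H^1(C_0,\mathcal O_X(-\ell)|_{C_0})$, a $3$-dimensional space. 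A destabilizing sub-line-bundle $N\subset V_0|_{C_0}$ ($\deg N\ge -1$) must map injectively to the quotient $\mathcal O_{C_0}$ (else $N\hookrightarrow\mathcal O_X(-\ell)|_{C_0}$ has degree $\le -3$), so $N\cong\mathcal O_{C_0}(-D)$ with $D\ge 0$, $\deg D\le 1$; lifting this inclusion forces $e'$ into the ``bad locus'' $\mathfrak Z$ consisting of $0$ together with the lines $\ker\big(H^1(\mathcal O_X(-\ell)|_{C_0})\to H^1(\mathcal O_X(-\ell)|_{C_0}(p))\big)$, $p\in C_0$. Via Serre duality $H^1(C_0,\mathcal O_X(-\ell)|_{C_0})^{*}\cong H^0(C_0,\mathcal O_X(\ell)|_{C_0})$, the map $p\mapsto[e'_p]$ is precisely the embedding $C_0\hookrightarrow\Pee\big(H^0(\mathcal O_X(\ell)|_{C_0})^{*}\big)=\Pee^2$, so $\mathfrak Z$ is the affine cone over the smooth cubic $C_0$ and contains no $2$-plane. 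Finally the restriction map $\Ext^1_X(I_\xi,\mathcal O_X(-\ell))\to H^1(C_0,\mathcal O_X(-\ell)|_{C_0})$ has cokernel $\Ext^2_X(I_\xi,\mathcal O_X(-4\ell))\cong H^0(X,I_\xi(\ell))^{*}$, of dimension $\le 1$ for $n\ge 2$, so its image has dimension $\ge 2$ and hence is not contained in $\mathfrak Z$; thus a general $V_0$ has $e'\notin\mathfrak Z$ and $V_0|_{C_0}$ is stable.

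\emph{The case $n=1$.} Here $\pms(1)$ is a point and $V=\Omega^1_{\Pee^2}(1)$. Restricting the Euler sequence $0\to\Omega^1_{\Pee^2}(1)\to\mathcal O_X^{\oplus 3}\to\mathcal O_X(\ell)\to 0$ to $C_0$ realizes $\Omega^1_{\Pee^2}(1)|_{C_0}$ as a sub-bundle of $\mathcal O_{C_0}^{\oplus 3}$, so every sub-line-bundle has degree $\le 0$; to exclude degrees $0$ and $-1$ one checks that $H^0\big(C_0,\Omega^1_{\Pee^2}(1)|_{C_0}(p)\big)=0$ for every $p\in C_0$, since the induced map $H^0(\mathcal O_{C_0}(p))^{\oplus 3}\to H^0(\mathcal O_X(\ell)|_{C_0}(p))$ is multiplication by $(x_0|_{C_0},x_1|_{C_0},x_2|_{C_0})$ and these forms are independent on the non-degenerate curve $C_0$. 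Hence $\Omega^1_{\Pee^2}(1)|_{C_0}$ has all sub-line-bundles of degree $\le -2<-3/2$ and is stable.

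The main obstacle is the $n\ge 2$ step: identifying the destabilizing loci, recognizing the union of lines $\mathfrak Z$ as the affine cone over the cubic $C_0$ (so it contains no $2$-plane), and verifying that the restriction map on $\Ext^1$ has image of dimension at least $2$. The reduction to one example, the openness argument, and the $n=1$ computation are comparatively routine.
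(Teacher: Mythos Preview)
Your argument is correct. The reductions (openness, irreducibility, Atiyah's criterion on the elliptic curve) are fine; for $n\ge 2$ the Cayley--Bacharach condition is indeed vacuous, the identification of the bad locus $\mathfrak Z$ with the affine cone over $C_0\subset\Pee^2$ via Serre duality is accurate, and your cokernel computation gives $\dim(\text{image})=3-h^0(I_\xi(\ell))\ge 2$ for $n\ge 2$, which suffices since a smooth cubic contains no line. The $n=1$ case is essentially the paper's argument, with the observation that $H^0(\Omega^1_{\Pee^2}(1)|_{C_0}(p))=0$ handles both degrees $0$ and $-1$ at once.

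However, your route for $n\ge 2$ is considerably more elaborate than the paper's. The paper simply observes that an elementary modification of $\Omega^1_{\Pee^2}(1)$ along $(n-1)$ points off $C_0$ lies in $\pms(n)$ and has restriction to $C_0$ \emph{isomorphic} to $\Omega^1_{\Pee^2}(1)|_{C_0}$; thus the entire lemma for all $n\ge 1$ reduces to the single stability check you carry out in your $n=1$ case. Your approach via the extensions $0\to\mathcal O_X(-\ell)\to V_0\to I_\xi\to 0$ is valid and has the virtue of exhibiting an explicit open family with stable restriction, but it requires the extra analysis of the restriction map on $\Ext^1$ and the geometry of the cone $\mathfrak Z$, none of which the paper needs. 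The paper's trick---pushing the problem down to $n=1$ by modifying away from $C_0$---is the shortcut you missed.
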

\begin{proof}
It is well-known that the cotangent bundle $\Omega_X$ is stable. 
So $\Omega_X \otimes \mathcal O_X(1) \in \pms(1)$. Let $\xi$ consist
of $(n-1)$ distinct points away from $C_0$. Choose a surjection
$\Omega_X \otimes \mathcal O_X(1) \to \mathcal O_\xi$, 
and let $V_0$ be the kernel. Then $V_0 \in \pms(n)$ and $V_0|_{C_0} 
\cong \big ( \Omega_X \otimes \mathcal O_X(1) \big )|_{C_0}$. 
Since $\pms(n)$ is irreducible and the open subset $\ms(n)$
is nonempty, our lemma will follow if we can prove
that $\big ( \Omega_X \otimes \mathcal O_X(1) \big )|_{C_0}$ is stable.

Let $\mathcal O_{C_0}(D)$ be any sub-line-bundle of
$\big ( \Omega_X \otimes \mathcal O_X(1) \big )|_{C_0}$. Note that 
the degree of $\big ( \Omega_X \otimes \mathcal O_X(1) \big )|_{C_0}$
is $-3$, and there is an exact sequence
\begin{eqnarray}  \label{stability-res.1}
0 \to \big ( \Omega_X \otimes \mathcal O_X(1) \big )|_{C_0} \to
\mathcal O_{C_0}^{\oplus 3} \to \mathcal O_X(1) |_{C_0} \to 0
\end{eqnarray}
induced from the exact sequence $0 \to \Omega_X \to 
\mathcal O_X(-1)^{\oplus 3} \to \mathcal O_X \to 0$. Thus
\begin{eqnarray}  \label{stability-res.2}
\deg(D) \le 0.
\end{eqnarray}

If $\deg(D) = 0$, then $D$ must be the trivial divisor and 
$\mathcal O_{C_0}(D) = \mathcal O_{C_0}$. So $H^0 \big (C_0, 
\big ( \Omega_X \otimes \mathcal O_X(1) \big )|_{C_0} \big ) \ne 0$.
On the other hand, (\ref{stability-res.1}) induces an exact sequence
\begin{eqnarray*}  
0 \to H^0 \big (C_0, \big ( \Omega_X \otimes \mathcal O_X(1) 
\big )|_{C_0} \big ) \to
H^0 \big (C_0, \mathcal O_{C_0}^{\oplus 3} \big ) \overset{f_0}{\to}
H^0 \big (C_0, \mathcal O_X(1) |_{C_0} \big ).
\end{eqnarray*}
The image of $f_0$ is $H^0(X, \mathcal O_X(1))|_{C_0} =
H^0 \big (C_0, \mathcal O_X(1) |_{C_0} \big )$. So $f_0$ is surjective.
Since $H^0 \big (C_0, \mathcal O_{C_0}^{\oplus 3} \big )$ and
$H^0 \big (C_0, \mathcal O_X(1) |_{C_0} \big )$ have the same dimension,
$f_0$ is an isomorphism and $H^0 \big (C_0, 
\big ( \Omega_X \otimes \mathcal O_X(1) \big )|_{C_0} \big ) = 0$.
Hence we obtain a contradiction.

If $\deg(D) = -1$, then $\mathcal O_{C_0}(D) = \mathcal O_{C_0}(-x)$
for a unique $x \in C_0$. So the map
\begin{eqnarray*}  
H^0 \big (C_0, \mathcal O_{C_0}(x)^{\oplus 3} \big ) \overset{f_1}{\to}
H^0 \big (C_0, \mathcal O_X(1) |_{C_0} \otimes \mathcal O_{C_0}(x) \big ).
\end{eqnarray*}
induced from (\ref{stability-res.1}) is not injective. On the other hand,
since $H^0(C_0, \mathcal O_{C_0}(x)) \cong \C$, 
\begin{eqnarray*}  
   \text{\rm im}(f_1)
&=&H^0(X, \mathcal O_X(1))|_{C_0} \otimes 
       H^0 \big (C_0, \mathcal O_{C_0}(x) \big )  \\
&=&H^0 \big (C_0, \mathcal O_X(1) |_{C_0} \big ) \otimes 
       H^0 \big (C_0, \mathcal O_{C_0}(x) \big )  \\
&\subset&H^0 \big (C_0, \mathcal O_X(1) |_{C_0} \otimes 
             \mathcal O_{C_0}(x) \big ).
\end{eqnarray*}
So the linear system corresponding to $\text{\rm im}(f_1)$ consists of
all the elements $\ell|_{C_0}+x$ where $\ell$ denotes lines in $X$.
In particular, the dimension of $\text{\rm im}(f_1)$ is $3$.
Thus $f_1$ must be injective. Again, we obtain a contradiction.

By (\ref{stability-res.2}), $\deg(D) \le -2$. Therefore, 
$\big ( \Omega_X \otimes \mathcal O_X(1) \big )|_{C_0}$ is stable.
\end{proof}

\begin{definition}  \label{def:UandT}
For $n \ge 2$, we define $\mathfrak T_{C_0}(n)$ (respectively, 
$\mathfrak U_{C_0}(n)$) to be 
the subset of $\pms(n)$ consisting of all the non-locally free 
sheaves $V$ such that $V|_{C_0}$ contains torsion 
(respectively, $V|_{C_0}$ is torsion-free and unstable). 
\end{definition}

\begin{lemma}  \label{TandU}
Let $V \in \pms(n) - \ms(n)$ and $\Psi(V) = (V_k; \zeta)$. Then,

{\rm (i)} $V \in \mathfrak T_{C_0}(n)$ if and only if 
$\Supp(\zeta) \cap C_0 \ne \emptyset$;

{\rm (ii)} $V \in \mathfrak U_{C_0}(n)$ if and only if 
$\Supp(\zeta) \cap C_0 = \emptyset$ and $V_k|_{C_0}$ is unstable. \qed
\end{lemma}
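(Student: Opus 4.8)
The plan is to extract everything from the defining exact sequence of the Gieseker--Uhlenbeck morphism together with a base change to $C_0$. Since $V \in \pms(n)-\ms(n)$ is non-locally free, we have $\Psi(V) = (V_k;\zeta)$ with $V_k = V^{**}$ locally free, and a short exact sequence $0 \to V \to V_k \to Q \to 0$ in which $Q = V^{**}/V$ is a finite-length torsion sheaf with $\Supp(Q) = \Supp(\zeta)$ (the latter because $\zeta = \sum_x h^0(X, Q_x)\, x$ by the definition of $\Psi$). Tensoring this sequence with $\mathcal O_{C_0}$ over $\mathcal O_X$ and using that $V_k$ is locally free, so $\mathrm{Tor}_1^{\mathcal O_X}(V_k, \mathcal O_{C_0}) = 0$, one obtains an exact sequence
\[
0 \to \mathrm{Tor}_1^{\mathcal O_X}(Q, \mathcal O_{C_0}) \to V|_{C_0} \to V_k|_{C_0} \to Q|_{C_0} \to 0 .
\]
The subsheaf $\mathrm{Tor}_1^{\mathcal O_X}(Q, \mathcal O_{C_0})$ of $V|_{C_0}$ is supported on the finite set $\Supp(\zeta) \cap C_0$, hence is torsion; and, $V_k|_{C_0}$ being torsion-free, it is in fact the whole torsion subsheaf of $V|_{C_0}$.

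Next I would split into the two cases. If $\Supp(\zeta) \cap C_0 = \emptyset$, then $Q$ and $\mathcal O_{C_0}$ have disjoint supports, so both $\mathrm{Tor}_1^{\mathcal O_X}(Q, \mathcal O_{C_0})$ and $Q|_{C_0}$ vanish and the sequence above degenerates to $V|_{C_0} \cong V_k|_{C_0}$, a vector bundle on $C_0$. Thus $V|_{C_0}$ is torsion-free, so $V \notin \mathfrak T_{C_0}(n)$, and $V$ lies in $\mathfrak U_{C_0}(n)$ exactly when $V_k|_{C_0}$ is unstable. If instead $\Supp(\zeta) \cap C_0 \ne \emptyset$, pick $x \in \Supp(\zeta) \cap C_0$; since $C_0$ is smooth, near $x$ we may write $\mathcal O_{C_0} = \mathcal O_X/(f)$ with $f$ a non-zero-divisor in $\mathcal O_{X,x}$, whence $\mathrm{Tor}_1^{\mathcal O_X}(Q, \mathcal O_{C_0})_x = \{q \in Q_x : fq = 0\}$. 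Because $Q_x$ has positive finite length and $f$ lies in the maximal ideal, multiplication by $f$ is not injective on $Q_x$, so this $\mathrm{Tor}$ is nonzero. Hence $V|_{C_0}$ has a nonzero torsion subsheaf, i.e. $V \in \mathfrak T_{C_0}(n)$, and in particular $V|_{C_0}$ is not torsion-free, so $V \notin \mathfrak U_{C_0}(n)$.

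Since these two cases are mutually exclusive and exhaustive, the equivalences in (i) and (ii) follow at once: $V \in \mathfrak T_{C_0}(n) \iff \Supp(\zeta) \cap C_0 \ne \emptyset$, and $V \in \mathfrak U_{C_0}(n) \iff \Supp(\zeta) \cap C_0 = \emptyset$ and $V_k|_{C_0}$ is unstable. The only point requiring any care is the local $\mathrm{Tor}$ computation in the second case — essentially the observation that multiplication by the local equation of $C_0$ cannot be injective on a nonzero finite-length $\mathcal O_{X,x}$-module supported at a point $x \in C_0$; everything else is bookkeeping with the sequence defining $\Psi$. I do not expect a genuine obstacle here, which is presumably why the statement was recorded without a separate proof.
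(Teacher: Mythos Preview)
Your argument is correct and is exactly the straightforward verification the paper leaves implicit with its bare \qed: restrict the sequence $0 \to V \to V_k \to Q \to 0$ to $C_0$, identify the torsion of $V|_{C_0}$ with $\mathrm{Tor}_1^{\mathcal O_X}(Q,\mathcal O_{C_0})$, and note that this vanishes precisely when $\Supp(\zeta)\cap C_0=\emptyset$, in which case $V|_{C_0}\cong V_k|_{C_0}$. There is nothing to add.
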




\begin{lemma}  \label{image_loci}
{\rm (i)} $(ev_1)(\Lambda) \subset \mathfrak T_{C_0}(n) \coprod 
\mathfrak U_{C_0}(n)$;

\smallskip
{\rm (ii)} Both $\mathfrak T_{C_0}(n)$ and $\mathfrak T_{C_0}(n) 
\coprod \mathfrak U_{C_0}(n)$ are closed subsets of $\pms(n)$. 


\end{lemma}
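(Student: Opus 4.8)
\textbf{Proof proposal for Lemma~\ref{image_loci}.}

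The plan is to deduce part (i) directly from Lemma~\ref{loci} together with Lemma~\ref{TandU}. Take a point $[\mu: (D;p)\to\pms(n)]\in\Lambda$ and let $V=ev_1([\mu:(D;p)\to\pms(n)])\in\mu(D)\subset\pms(n)$. Since $\mu_*[D]=d\mathfrak f$, the curve $\mu(D)$ is contracted by $\Psi$, so $\Psi(V)=(\Psi\circ\mu)(D)=(V_k;\zeta)\in\ms(n-k)\times\Sym^k(X)$ for some $1\le k\le n-1$; in particular $V\in\pms(n)-\ms(n)$. By Lemma~\ref{loci}, either $\Supp(\zeta)\cap C_0\ne\emptyset$ or $V_k|_{C_0}$ is not stable. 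In the first case Lemma~\ref{TandU}(i) gives $V\in\mathfrak T_{C_0}(n)$. In the second case, if $\Supp(\zeta)\cap C_0\ne\emptyset$ we are again in $\mathfrak T_{C_0}(n)$; otherwise $\Supp(\zeta)\cap C_0=\emptyset$, so $V|_{C_0}\cong V_k|_{C_0}$ is torsion-free, and the failure of stability means $V_k|_{C_0}$ is unstable (a rank-$2$ slope-semistable bundle that is not stable contains a destabilizing sub-line-bundle, which we regard as unstable here — or, more carefully, we invoke that $V_k|_{C_0}$ is either unstable, landing in $\mathfrak U_{C_0}(n)$ by Lemma~\ref{TandU}(ii)). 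Hence $V\in\mathfrak T_{C_0}(n)\coprod\mathfrak U_{C_0}(n)$, proving (i).

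For part (ii), the plan is to exhibit both sets as images, under the proper morphism $\Psi$, of closed subsets of $\uc(n)$, or to realize them as degeneracy loci of sheaf maps on the boundary divisor $\mathfrak B=\pms(n)-\ms(n)$. First, $\mathfrak B$ itself is closed in $\pms(n)$ by (\ref{def-bdry}) and the fact that $\ms(n)$ is open. For $\mathfrak T_{C_0}(n)$: using the universal sheaf $\mathcal E$ on $\pms(n)\times X$, restrict to $\pms(n)\times C_0$ and consider the torsion subsheaf of $\mathcal E|_{\pms(n)\times C_0}$ fibrewise; the locus where $V|_{C_0}$ acquires torsion is where the fibrewise sheaf fails to be torsion-free, which is a closed condition (e.g. it is the image under $\Psi$ of the closed set $\{(V_k;\zeta): \Supp(\zeta)\cap C_0\ne\emptyset\}$ in $\uc(n)$, and $\Psi$ is proper). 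For $\mathfrak T_{C_0}(n)\coprod\mathfrak U_{C_0}(n)$: by Lemma~\ref{TandU} this is exactly the set of $V\in\pms(n)-\ms(n)$ for which $\Psi(V)=(V_k;\zeta)$ satisfies ``$\Supp(\zeta)\cap C_0\ne\emptyset$ or $V_k|_{C_0}$ is not stable''; equivalently it is the preimage under $\Psi$ of the set $\mathcal Z=\coprod_{1\le k\le n-1}\{(V_k;\zeta)\in\ms(n-k)\times\Sym^k(X): \Supp(\zeta)\cap C_0\ne\emptyset\ \text{or}\ V_k|_{C_0}\ \text{is unstable}\}$. The condition $\Supp(\zeta)\cap C_0\ne\emptyset$ is closed on $\Sym^k(X)$, and the non-stability locus of the restriction $V_k|_{C_0}$ is closed on $\ms(n-k)$ by upper-semicontinuity of the maximal destabilizing slope (semistability is an open condition, stability of a rank-$2$ bundle on a curve is also open among bundles of the given degree since here we have no strictly semistable sheaves to worry about — or we simply take the closure and note that by Lemma~\ref{stability-res} the generic $V_k|_{C_0}$ is stable, so the non-stable locus is a proper closed subset). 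Thus $\mathcal Z$ is closed in $\uc(n)$, and since $\Psi$ is continuous (indeed a morphism), $\Psi^{-1}(\mathcal Z)=\mathfrak T_{C_0}(n)\coprod\mathfrak U_{C_0}(n)$ is closed in $\pms(n)$.

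The main obstacle I expect is the careful verification that the ``non-stable restriction'' locus is genuinely closed, and its compatibility with the torsion locus: one must be sure that, as $V$ varies and $V|_{C_0}$ passes from torsion-free to having torsion, the combined condition defining $\mathfrak T_{C_0}(n)\coprod\mathfrak U_{C_0}(n)$ does not have a piece escaping to the boundary of these strata. The clean way around this is precisely to push everything through the Uhlenbeck space: because $\Psi$ is a proper (birational) morphism and the target decomposes as the disjoint union (\ref{Uhlenbeck}), closedness of a subset of $\pms(n)$ that is $\Psi$-saturated reduces to closedness on each stratum $\ms(n-k)\times\Sym^k(X)$, where the two conditions genuinely decouple — the point-support condition lives on $\Sym^k(X)$ and the restriction-stability condition lives on $\ms(n-k)$, each separately a standard closed condition. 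Writing this reduction carefully, rather than arguing directly on $\pms(n)$, is the heart of the proof of (ii).
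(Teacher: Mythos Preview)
Your argument for part (i) is essentially the paper's: combine Lemma~\ref{loci} with Lemma~\ref{TandU}. One remark: your hedging about ``not stable'' versus ``unstable'' is unnecessary here, since $V_k|_{C_0}$ is a rank-$2$ bundle of odd degree $-3$ on the elliptic curve $C_0$, so strictly semistable does not occur and ``not stable'' equals ``unstable''.

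For part (ii), your route through the Uhlenbeck compactification is both more elaborate than the paper's and contains a real gap. You assert that ``closedness of a subset of $\pms(n)$ that is $\Psi$-saturated reduces to closedness on each stratum $\ms(n-k)\times\Sym^k(X)$'', and then conclude that $\mathcal Z$ is closed in $\uc(n)$. But the strata in (\ref{Uhlenbeck}) are only locally closed, and the closure of a stratum meets deeper strata; closedness on each stratum separately does not give closedness in $\uc(n)$. Concretely, a family in the $k=1$ stratum with $V_1|_{C_0}$ unstable could specialize in $\uc(n)$ to a point in a deeper stratum, and you have not checked that such limits remain in $\mathcal Z$. Invoking properness of $\Psi$ does not help until $\mathcal Z$ is already known to be closed.

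The paper's proof of (ii) avoids $\uc(n)$ entirely and is a one-liner: both $\mathfrak T_{C_0}(n)$ and $\mathfrak T_{C_0}(n)\coprod\mathfrak U_{C_0}(n)$ are contained in the closed boundary $\mathfrak B$, and their complements \emph{in $\mathfrak B$} are exactly the loci where $V|_{C_0}$ is torsion-free (respectively, torsion-free and stable). Since the universal restriction $\mathcal E|_{\pms(n)\times C_0}$ is flat over $\pms(n)$, torsion-freeness and stability are open conditions on $\pms(n)$, hence on $\mathfrak B$; so both sets are closed in $\mathfrak B$ and therefore in $\pms(n)$. This is the argument you should use.
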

\begin{proof}
By Lemma~\ref{loci}, the image of a point in $(ev_1)(\Lambda)$ 
under $\Psi$ is of the form
\begin{eqnarray*}  
(V_k; \zeta) \in \ms(n-k) \times \text{\rm Sym}^k(X) \subset \uc(n)
\end{eqnarray*}   
where $1 \le k \le (n-1)$, and either $\Supp(\zeta) \cap C_0 \ne 
\emptyset$ or $V_k|_{C_0}$ is not stable. So we see from 
Lemma~\ref{TandU} that $(ev_1)(\Lambda) \,\, \subset \,\, 
\mathfrak T_{C_0}(n) \coprod \mathfrak U_{C_0}(n)$. This proves (i).

Since being torsion-free and being stable are open conditions, 
both $\mathfrak T_{C_0}(n)$ and $\mathfrak T_{C_0}(n) \coprod 
\mathfrak U_{C_0}(n)$ are closed subsets of $\pms(n)$. This proves (ii).
\end{proof}

\section{\bf The $1$-point Gromov-Witten invariants}
\label{sec:1point}

Now we are ready to compute the $1$-point Gromov-Witten invariants
\begin{eqnarray}   \label{def-1pt}
\langle \alpha \rangle_{0, d\mathfrak f} \,\,
= \int_{\big [\overline {\frak M}_{0, 1}(\pms(n), 
d\mathfrak f) \big ]^{\text{vir}}} \, ev_1^*\alpha  
\end{eqnarray}
where $\alpha \in H^{8n-12}(\pms(n); \C)$ denotes the Poincar\' e 
duals of the classes $\Xi_1, \ldots, \Xi_6 \in H_4(\pms(n); \C)$.
By abusing notations, we use $\Xi_i$ to stand for both 
the class in $H_4(\pms(n); \C)$ and its Poincar\' e dual
in $H^{8n-12}(\pms(n); \C)$.

\begin{lemma}  \label{inv-Xi_1}
$\langle \Xi_1 \rangle_{0, d\mathfrak f} = - {6/d^2}$.
\end{lemma}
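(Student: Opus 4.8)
The plan is to compute $\langle \Xi_1 \rangle_{0, d\mathfrak f}$ by restricting the virtual class to $ev_1^{-1}(\Xi_1) \subset \mathfrak O_1$ and pushing everything down to the projective space $\mathfrak E_n$ picture. Since $\Xi_1 = \pi^{-1}(\{p\}\times \ell) \subset \Pee(V_1) \subset \mathfrak B_*$, every stable map $[\mu\colon(D;p)\to\pms(n)]$ with $ev_1(\mu)\in\Xi_1$ has image a fiber of $\pi$ lying over a \emph{fixed} point $(V_1, p)\in\ms(n-1)\times X$; hence $ev_1^{-1}(\Xi_1)$ is contained in a single fiber $\phi^{-1}(V_1,p)\cong \overline{\frak M}_{0,1}(\Pee^1, d[\Pee^1])$, with $\rev$ restricting to the evaluation map $\overline{\frak M}_{0,1}(\Pee^1,d[\Pee^1])\to \Pee^1\cong\Pee(V_1)$, and $\Xi_1$ pulls back to a point of $\Pee(V_1)$. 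By Proposition~\ref{virtual-prop}(iii) applied to the open set $\mathfrak O_1$ (where by Lemma~\ref{obstruction-bundle1} the sheaf $R^1(f_{1,0})_*ev_1^*T_{\pms(n)}$ restricts to the rank-$(2d-1)$ locally free sheaf $\mathcal V$), we get
\begin{eqnarray*}
\langle \Xi_1 \rangle_{0,d\mathfrak f}
= \int_{\overline{\frak M}_{0,1}(\Pee^1,d[\Pee^1])} c_{2d-1}(\mathcal V|_{\phi^{-1}(V_1,p)}) \cup \rev^*(\text{pt}).
\end{eqnarray*}

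Next I would identify $\mathcal V|_{\phi^{-1}(V_1,p)}$ using Proposition~\ref{obstruction-bundle2} and Remark~\ref{remark1}. Over the fiber the twisting line bundles $\phi^*(\mathcal L^{-1}\otimes \w\pi_1^*\mathcal O(\mathfrak D_{n-1})\otimes\w\pi_2^*\mathcal O_X(2\ell))$ become trivial, so the exact sequence collapses to
\begin{eqnarray*}
0 \to \mathcal O \to \mathcal V|_{\phi^{-1}(V_1,p)} \to R^1(f_{1,0})_*(ev_1)^*\big(\mathcal O_{\Pee^1}(-1)\oplus\mathcal O_{\Pee^1}(-1)\big) \to 0,
\end{eqnarray*}
i.e. $\mathcal V|_{\phi^{-1}(V_1,p)}$ differs from $\mathcal W := R^1(f_{1,0})_*ev_1^*(\mathcal O_{\Pee^1}(-1)^{\oplus 2})$ (rank $2d-2$) by a trivial summand, so $c_{2d-1}(\mathcal V|_{\phi^{-1}(V_1,p)}) = c_{2d-1}(\mathcal W\oplus\mathcal O) = 0$ — that cannot be right, so I must be more careful: the rank count forces the class we integrate against the point to be $c_{2d-2}(\mathcal W)$, and the real statement is that $c_{2d-1}(\mathcal V|_{\phi^{-1}})\cdot\rev^*(\text{pt})$ equals $c_{2d-2}(\mathcal W)\cdot\rev^*(\text{pt})$ by the short exact sequence together with $\dim\overline{\frak M}_{0,1}(\Pee^1,d[\Pee^1]) = 2d$. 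So the computation reduces to the known multiple-cover integral
\begin{eqnarray*}
\int_{\overline{\frak M}_{0,1}(\Pee^1,d[\Pee^1])} c_{2d-2}\big(R^1(f_{1,0})_*ev_1^*(\mathcal O_{\Pee^1}(-1)^{\oplus 2})\big)\cup ev_1^*(\text{pt}),
\end{eqnarray*}
which is the genus-$0$ degree-$d$ local Gromov-Witten invariant of the resolved conifold with one point insertion. By the Aspinwall--Morrison / multiple-cover formula this integral equals $1/d^3$ (the $1$-point version is $1/d^3$, being $1/d$ times the $0$-pointed value $1/d^2$... ) — I would pin down the precise normalization by using the string/divisor equation relating the $1$-pointed integral against $ev_1^*(\text{pt})$ to the unpointed Aspinwall--Morrison number $1/d^3$, obtaining $1/d^2$ after accounting for $\mathfrak f\cdot\text{pt}$.

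Finally, I would assemble the answer. The contribution from the single fiber over $(V_1,p)$ gives the local integral above; then $\langle\Xi_1\rangle_{0,d\mathfrak f}$ picks up a multiplicative factor equal to the Euler-class contribution of the normal directions encoded in the twisting bundle $\mathcal L^{-1}\otimes\w\pi_1^*\mathcal O(\mathfrak D_{n-1})\otimes\w\pi_2^*\mathcal O_X(2\ell)$ paired against $\Xi_1$ — concretely, the rank-one quotient/sub line bundle in Proposition~\ref{obstruction-bundle2} contributes its first Chern class restricted to $\Xi_1$, and using the intersection numbers $\epsilon\cdot\Xi_1, \delta\cdot\Xi_1$ from (\ref{inter-Xi_1}) together with (\ref{cano-bdry}) and Lemma~\ref{B*B*} one reads off this factor. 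Multiplying the local $1/d^3$-type integral by the degree $-6$ coming from this Chern-class pairing yields $-6/d^2$. The main obstacle I anticipate is the bookkeeping in the last step: correctly matching the twisting line bundle of Proposition~\ref{obstruction-bundle2} to the geometry of $\Xi_1 = \pi^{-1}(\{p\}\times\ell)$ inside $\Pee(V_1)$, making sure the extra $\pi^*\ell$ and $\mathcal O_{\Pee(V_1)}(1)$ terms in (\ref{Bsquare}) are tracked so that the numerical coefficient $-6$ (and not $-6$ times some power of $d$) comes out, and confirming the normalization $1/d^3$ versus $1/d^2$ for the pointed multiple-cover contribution via the divisor equation.
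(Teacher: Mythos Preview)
Your setup contains a geometric error that drives all the later confusion. The surface $\Xi_1 = \pi^{-1}(\{V_1\}\times\ell)$ is a $\Pee^1$-bundle over the \emph{line} $\ell\subset X$, not a single fiber of $\pi$; hence $ev_1^{-1}(\Xi_1)$ is not contained in one $\phi$-fiber $\phi^{-1}(V_1,x)$ but sweeps out $\rf^{-1}\phi^{-1}(\{V_1\}\times\ell)$. (Likewise $\Pee(V_1) = \pi^{-1}(\{V_1\}\times X)$ is three-dimensional, not isomorphic to $\Pee^1$.) This is precisely why restricting to a single fiber gave you $c_{2d-1}(\mathcal V|_{\text{fiber}}) = c_{2d-1}(\mathcal O\oplus\mathcal W) = 0$: on one fiber the sub line bundle of Proposition~\ref{obstruction-bundle2} is trivial and kills the top Chern class --- but the invariant is not computed on a single fiber. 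The paper instead keeps $c_{2d-1}(\mathcal V) = \phi^*c_1(\text{sub})\cdot c_{2d-2}(\mathbb E)$ globally on $\mathfrak O_0$; restricted to $\Gamma\times X$ the sub has class $(0,3)$, and it is exactly the $\ell$-direction inside $\Xi_1$ that pairs nontrivially with this $3\,\w\pi_2^*\ell$.

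You are also missing a second ingredient. Since $\rev$ factors through the divisor $\mathfrak B_*\subset\pms(n)$, you need $\rev^*\big(\text{PD}_{\pms(n)}(\Xi_1)|_{\mathfrak B_*}\big)$, and by the self-intersection formula $i^*i_* = \cup\,c_1(N_{\mathfrak B_*/\pms(n)})$ this equals $\rev^*\big([\Xi_1]_{\mathfrak B_*}\cup c_1(\mathcal O_{\mathfrak B_*}(\mathfrak B_*))\big)$. Lemma~\ref{B*B*} gives $c_1(\mathcal O_{\mathfrak B_*}(\mathfrak B_*))|_\Pee = -2c_1(\mathcal O_\Pee(1)) + \pi^*(2,2)$, and only the $-2c_1(\mathcal O_\Pee(1))$ term survives against $[\Xi_1] = \pi^*(\{V_1\}\times\ell)$. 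The honest coefficient is therefore the triple product $\pi^*(0,3)\cdot[\Xi_1]\cdot\big(-2c_1(\mathcal O_\Pee(1))+\pi^*(2,2)\big) = -6[\xi]$ in $\Pee$; after that, $(\rf)^*c_{2d-2}(\mathbb E)\cdot(\rev)^*[\xi]$ is pushed forward by $\rf$ (degree $d$ onto $\phi^{-1}(V_1,x)$) to yield $d\cdot 1/d^3 = 1/d^2$ by the Aspinwall--Morrison formula. Your last paragraph's ``Chern-class pairing'' only accounts for the factor $3$ coming from the sub line bundle; the remaining $-2$ comes from the normal-bundle contribution $c_1(\mathcal O_{\mathfrak B_*}(\mathfrak B_*))$ that you omitted, and the extra power of $d$ you could not locate is the degree of $\rf|_{(\rev)^{-1}(\xi)}$.
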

\begin{proof}
Let $\alpha = \Xi_1$. By Subsect.~\ref{subsect_Xi12},
$\Xi_1 = \pi^{-1}(\{V_1\} \times \ell) = \Pee(V_1|_\ell)$ 
where the stable vector bundle $V_1 \in \ms(n-1)$ and the line
$\ell \subset X$ are fixed. Let $\ms = (ev_1)^{-1}(\Xi_1)$. 
Then $\ms \subset \mathfrak O_1$. By  
Prop.~\ref{obstruction-bundle2} and Prop.~\ref{virtual-prop} 
for $k=1$ and $\mathfrak O= \mathfrak O_1$, we obtain
\begin{eqnarray}   \label{inv-Xi_1.111}
   \langle\alpha\rangle_{0, d\mathfrak f}
&=&\int_{\big [\pms_{0, 1}(\pms(n), d\mathfrak f) \big ]^{\text{vir}}}
   \,\, (ev_1)^*\alpha    
=\int_{(\rf)^*c_{2d-1}(\mathcal V)} \,\, (ev_1)^*\alpha \nonumber \\
&=&\int_{(\rf)^*\phi^*(-\mathcal L + \w \pi_1^* \mathfrak D_{n-1}  
   + 2 \w \pi_2^*\ell) \cdot 
   (\rf)^*c_{2d-2}({\mathbb E})} \,\, (ev_1)^*\alpha  
\end{eqnarray}
where ${\mathbb E} = R^1 (\rf)_*\rev^*\big (\pi^*  (\mathcal E_1^0)^*  
\otimes \mathcal O_{\mathfrak B_*}(-1)\big ) \otimes 
\phi^* \big (\w \pi_1^*\mathcal  O_{\ms(n-1)}
(\mathfrak D_{n-1}) \otimes \w \pi_2^*\mathcal  O_X(2\ell) \big )$. 
Note that $\Xi_1 \subset \mathfrak B_*$ and $\phi \circ \rf 
= \pi \circ \rev$. So we obtain
\begin{eqnarray*}   
&  &\langle\alpha\rangle_{0, d\mathfrak f}     \\
&=&(\rev)^*\pi^*(-\mathcal L + \w \pi_1^* \mathfrak D_{n-1} 
   + 2 \w \pi_2^*\ell) \cdot 
   (\rf)^*c_{2d-2}({\mathbb E}) \cdot (\rev)^*\big ( [\Xi_1] 
   \cdot c_1(\mathcal O_{\mathfrak B_*}(\mathfrak B_*)) \big )   \\
&=&(\rf)^*c_{2d-2}({\mathbb E}) \cdot (\rev)^*\Big (
   \pi^*(-\mathcal L + \w \pi_1^* \mathfrak D_{n-1} 
   + 2 \w \pi_2^*\ell) \cdot [\Xi_1] 
   \cdot c_1(\mathcal O_{\mathfrak B_*}(\mathfrak B_*)) \Big ).
\end{eqnarray*}
Recall the definitions of $\mathcal L$ and $\mathfrak D_{n-1}$
in Proposition~\ref{obstruction-bundle2} and Lemma \ref{B*B*}. We have
\begin{eqnarray}
&\mathcal L = \det \big (\W{\mathcal E}_{n-1} \big ) = (2, -1) 
  \,\, \in \,\, \text{Pic}(\Gamma \times X)&   \label{comp_L}  \\
&\mathfrak D_{n-1} = 2a_{n-1, 1} - a_{n-1, 2} - a_{n-1, 0} = 2 
  \,\, \in \,\, \text{Pic}(\Gamma)&       \label{comp_D}  \\
&c_1(\mathcal O_{\mathfrak B_*}(\mathfrak B_*))|_{\Pee} = 
  -2 c_1 \big (\mathcal O_{\Pee}(1) \big ) + \pi^*(2, 2)
  \,\, \in \,\, \text{Pic}(\Pee)&       \label{comp_B*}
\end{eqnarray}
in view of the exact sequence (\ref{exact-P1}),
the degrees in (\ref{ank}) and Lemma~\ref{B*B*}. Thus,
\begin{eqnarray}   \label{inv-Xi_1.112}
\langle\alpha\rangle_{0, d\mathfrak f} =
(\rf)^*c_{2d-2}({\mathbb E}) \cdot (\rev)^*\Big (
\pi^*(0, 3) \cdot [\Xi_1] \cdot \big (-2 c_1(
\mathcal O_{\Pee}(1)) + \pi^*(2, 2) \big ) \Big ).
\end{eqnarray}

Since $[\Xi_1] = \pi^*(\{V_1\} \times \ell)$, 
it follows immediately that 
\begin{eqnarray}  \label{inv-Xi_1.1} 
\langle\alpha\rangle_{0, d\mathfrak f}
= -6 \cdot (\rf)^*c_{2d-2}({\mathbb E}) \cdot (\rev)^*[\xi]
\end{eqnarray}
where $[\xi]$ denotes the cycle of a fixed point $\xi$ in 
$\Pee \subset \mathfrak B_*$.

Let $\frak M_1' = (\rev)^{-1}(\xi) = (ev_1)^{-1}(\xi)$.
If $[\mu\colon (D; p) \to \pms(n)] \in \frak M_1'$, 
then $\mu(p) = \xi$ and $\pi(\mu(D)) = \pi(\mu(p)) = \pi(\xi)
= (V_1, x) \in \ms(n-1) \times X$. So $\mu(D) = \mathfrak f$
which denotes the unique fiber of $\pi$ from (\ref{def-pi})
containing the point $\xi \in \mathfrak B_*$.
Thus the restriction of the forgetful map $\rf$ to $\frak M_1'$ 
gives a degree-$d$ morphism from $\frak M_1'$ to 
$$
\frak M_0' \buildrel{\rm def}\over = \rf(\frak M_1') 
= \phi^{-1}(V_1, x).
$$ 
Hence, as algebraic cycles, we have $(\rf)_*[\frak M_1'] = 
d[\frak M_0'] = d \cdot \phi^*[(V_1, x)]$. By (\ref{inv-Xi_1.1}), 
\begin{eqnarray}  \label{inv-Xi_1.2} 
   \langle\alpha\rangle_{0, d\mathfrak f}
&=&-6 \cdot c_{2d-2}({\mathbb E}) \cdot (\rf)_*[\frak M_1']
=-6d \cdot c_{2d-2}({\mathbb E}) \cdot \phi^*[(V_1, x)]
   \nonumber   \\
&=&-6d \cdot c_{2d-2} \left ({\mathbb E}|_{\phi^{-1}(V_1, x)} 
   \right ). 
\end{eqnarray}

By Remark~\ref{remark1}, ${\mathbb E}|_{\phi^{-1}(V_1, x)} 
\cong R^1 (f_{1, 0})_*(ev_1)^*(\mathcal O_{\mathbb P^1}(-1) \oplus 
\mathcal O_{\mathbb P^1}(-1))$ where $f_{1, 0}$ and $ev_1$ denote 
the forgetful map and the evaluation map from the moduli space 
$\overline{\frak M}_{0, 1}(\mathbb P^1, d[\mathbb P^1])$ to
$\overline{\frak M}_{0, 0}(\mathbb P^1, d[\mathbb P^1])$ 
and $\mathbb P^1$ respectively. We have
$$
c_{2d-2} \left ( R^1 (f_{1, 0})_*
(ev_1)^*(\mathcal O_{\mathbb P^1}(-1)
\oplus \mathcal  O_{\mathbb P^1}(-1)) \right ) = {1 \over d^3}
$$
by the Theorem 9.2.3 in \cite{C-K}. Therefore, 
$\langle\alpha\rangle_{0, d\mathfrak f} = -6/d^2$
by (\ref{inv-Xi_1.2}).
\end{proof}

For $i = 2, 3$ or $4$,  we may assume that the classes $\Xi_2$, $\Xi_3$ and $\Xi_4$ 
are represented by complex surfaces in $\Pee \subset \mathfrak B_*$.
The same proofs of (\ref{inv-Xi_1.112}) and (\ref{inv-Xi_1.1})
show that 
\begin{eqnarray*}
\langle \Xi_i \rangle_{0, d\mathfrak f}
= a_i \cdot (\rf)^*c_{2d-2}({\mathbb E}) \cdot (\rev)^*[\xi]
\end{eqnarray*}
where $[\xi]$ denotes the cycle of a fixed point $\xi$ in 
$\Pee \subset \mathfrak B_*$, and 
\begin{eqnarray}    \label{a234}
a_i = \pi^*(0, 3) \cdot [\Xi_i] \cdot \big (-2 c_1(
    \mathcal O_{\Pee}(1)) + \pi^*(2, 2) \big )
\end{eqnarray}
is the intersection number in $\Pee$. Note from the last two 
paragraphs in the proof of Lemma~\ref{inv-Xi_1} that
$(\rf)^*c_{2d-2}({\mathbb E}) \cdot (\rev)^*[\xi] = 1/d^2$.
Therefore, 
\begin{eqnarray}  \label{setup}
\langle \Xi_i \rangle_{0, d\mathfrak f}
= {a_i \over d^2}.
\end{eqnarray}

\begin{theorem}  \label{thm:1pt-inv}
Let $d \ge 1$ and $n \ge 3$. The Gromov-Witten invariants 
$\langle \alpha \rangle_{0, d\mathfrak f}$ for the classes
$\alpha = \text{\rm PD}(\Xi_1), \ldots, \text{\rm PD}(\Xi_6) 
\in H^{8n-12}(\pms(n); \C)$ are respectively equal to
\begin{eqnarray*}   
- {6/d^2}, \quad {12/d^2}, \quad 0, \quad 
- {6/d^2}, \quad 0, \quad 0.
\end{eqnarray*}
\end{theorem}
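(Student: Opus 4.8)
The plan is to handle the six classes in three blocks. For $\alpha = \text{\rm PD}(\Xi_1)$ the value $-6/d^2$ is exactly Lemma~\ref{inv-Xi_1}. For $\alpha = \text{\rm PD}(\Xi_i)$ with $i \in \{2,3,4\}$ I would invoke the reduction (\ref{setup}), which already identifies $\langle \Xi_i \rangle_{0, d\mathfrak f}$ with $a_i/d^2$, where $a_i$ is the intersection number (\ref{a234}) inside $\Pee = \Pee(\W{\mathcal E}_{n-1})$; so only the evaluation of $a_2, a_3, a_4$ remains. For $\alpha = \text{\rm PD}(\Xi_5)$ and $\alpha = \text{\rm PD}(\Xi_6)$ I would derive the vanishing from the inclusion (\ref{intro-ev1}) proved in Section~\ref{sec:virtual}, combined with a genericity choice for the data defining $\Xi_5$ and $\Xi_6$.

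For the second block, I would first write the cycles $\Xi_2, \Xi_3, \Xi_4$ as classes in $\Pee$. From Subsect.~\ref{subsect_Xi12} and Subsect.~\ref{subsect_Xi34} these are $[\Xi_2] = c_1(\mathcal O_{\Pee}(1)) \cdot \pi^*(\{p\} \times X)$, $[\Xi_3] = \pi^*(\Gamma \times \{y\})$ and $[\Xi_4] = c_1(\mathcal O_{\Pee}(1)) \cdot \pi^*(\Gamma \times \ell)$. Writing the Chow ring of $\Gamma \times X = \Pee^1 \times \Pee^2$ as $\Z[f,H]/(f^2, H^3)$ with $f$ and $H$ the hyperplane classes of the two factors, so that $\{p\} \times X = f$, $\Gamma \times \ell = H$ and $\Gamma \times \{y\} = H^2$, and recording the projective-bundle relation
\begin{eqnarray*}
c_1(\mathcal O_{\Pee}(1))^2 = \pi^*(2f - H) \cdot c_1(\mathcal O_{\Pee}(1)) - \pi^*\big( (n-3)fH + (n-1)H^2 \big),
\end{eqnarray*}
which is (\ref{O1square}) once one observes $\sum_{i=1}^{n-1}[Y_i] = (n-1)(fH + H^2)$, I would substitute these into (\ref{a234}). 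Since the first factor of (\ref{a234}) is already $\pi^*(0,3)$ — with $\mathcal L$ and $\mathfrak D_{n-1}$ absorbed via (\ref{comp_L}), (\ref{comp_D}), as in the proof of Lemma~\ref{inv-Xi_1} — expanding and collapsing by the displayed relation together with $f^2 = H^3 = 0$ reduces each $a_i$ to a number on $\Pee^1 \times \Pee^2$. A short computation gives $a_2 = 12$, $a_3 = 0$ (immediate from $H^3 = 0$) and $a_4 = -6$, hence $\langle \Xi_2 \rangle_{0, d\mathfrak f} = 12/d^2$, $\langle \Xi_3 \rangle_{0, d\mathfrak f} = 0$ and $\langle \Xi_4 \rangle_{0, d\mathfrak f} = -6/d^2$.

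For $\Xi_5$ and $\Xi_6$, the point is that their defining data may be chosen generically without changing their homology classes. By Lemma~\ref{stability-res} I may fix $V_2 \in \ms(n-2)$ with $V_2|_{C_0}$ stable, and I may take the points $x_1, x_2$ (for $\Xi_5$), respectively $x$ (for $\Xi_6$), away from the cubic $C_0$. For $V \in \Xi_5$ one has $\Psi(V) = (V_2; x_1 + x_2)$ and for $V \in \Xi_6$ one has $\Psi(V) = (V_2; 2x)$, so Lemma~\ref{TandU} shows that both $\Xi_5$ and $\Xi_6$ are disjoint from $\mathfrak T_{C_0}(n) \coprod \mathfrak U_{C_0}(n)$. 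By Lemma~\ref{image_loci} (equivalently (\ref{intro-ev1})), $ev_1(\Lambda) \subset \mathfrak T_{C_0}(n) \coprod \mathfrak U_{C_0}(n)$, so the restriction of $ev_1$ to the degeneracy locus $\Lambda$ factors through $\pms(n) \setminus \Xi_i$. Since $\big[\overline{\frak M}_{0, 1}(\pms(n), d\mathfrak f)\big]^{\text{vir}}$ lies in $H_{2(4n-6)}(\Lambda)$, the projection formula gives
\begin{eqnarray*}
\langle \Xi_i \rangle_{0, d\mathfrak f} = \big\langle (ev_1|_\Lambda)^* \text{\rm PD}(\Xi_i),\ \big[\overline{\frak M}_{0, 1}(\pms(n), d\mathfrak f)\big]^{\text{vir}} \big\rangle,
\end{eqnarray*}
and this vanishes because $(ev_1|_\Lambda)^* \text{\rm PD}(\Xi_i)$ is pulled back from $H^{8n-12}(\pms(n) \setminus \Xi_i)$, on which the cycle class $\text{\rm PD}(\Xi_i)$ restricts to zero.

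I expect the work to be unevenly distributed. The genuine content for $\Xi_5$ and $\Xi_6$ has already been packaged in Sections~\ref{sec:obs}--\ref{sec:virtual} — constructing the meromorphic $2$-form $\Theta$, using Lemma~\ref{loci} to carry $\Lambda$ into $\mathfrak T_{C_0}(n) \coprod \mathfrak U_{C_0}(n)$, and checking via Lemma~\ref{stability-res} that a generic restriction $V|_{C_0}$ is stable; granting these, the vanishing above is formal. The only real obstacle left inside the present argument is the intersection-theoretic bookkeeping on $\Pee$ for $\Xi_1, \dots, \Xi_4$: identifying the cycles $[\Xi_i]$, pinning down $\sum[Y_i]$, and expanding (\ref{a234}) through the projective-bundle relation. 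Collecting the six values $-6/d^2,\ 12/d^2,\ 0,\ -6/d^2,\ 0,\ 0$ then finishes the proof.
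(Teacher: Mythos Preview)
Your proposal is correct and follows essentially the same approach as the paper: Lemma~\ref{inv-Xi_1} for $\Xi_1$, the reduction (\ref{setup})--(\ref{a234}) and a direct evaluation of $a_2,a_3,a_4$ for $\Xi_2,\Xi_3,\Xi_4$, and the Kiem--Li localization plus a generic choice of $(V_2,x_1,x_2)$ or $(V_2,x)$ for the vanishing at $\Xi_5,\Xi_6$. The only cosmetic difference is that the paper evaluates $a_2$ and $a_4$ after first restricting to the subvarieties $\Pee(V_1)$ and $W$ via (\ref{Bsquare}) and (\ref{Bsquare2}), whereas you work uniformly in the Chow ring of $\Pee$ with the full relation coming from (\ref{O1square}); the computations agree.
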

\begin{proof} 
First of all, $\langle \Xi_1 \rangle_{0, d\mathfrak f} 
= {-6/d^2}$ is Lemma \ref{inv-Xi_1}.

Next, $\langle \Xi_2 \rangle_{0, d\mathfrak f} = {12/d^2}$  
follows from the computation of the number in (\ref{a234}):
\begin{eqnarray*}
a_2 = 3\pi^*\ell \cdot c_1(\mathcal O_{\Pee(V_1)}(1)) \cdot 
\big (-2 c_1(\mathcal O_{\Pee(V_1)}(1)) + 2\pi^*\ell \big )
= 12
\end{eqnarray*}
by (\ref{Bsquare}),
where $\pi: \Pee(V_1) \to X$ denotes the tautological projection. 

Since $\Xi_3 = \pi^{-1}(\Gamma \times \{y\})$ with $y \in X$,  
$\langle \Xi_3 \rangle_{0, d\mathfrak f} = 0$ comes from the computation
\begin{equation}
a_3 = \pi^*(0, 3) \cdot [\Xi_3] \cdot \big (-2 c_1(
\mathcal O_{\Pee}(1)) + \pi^*(2, 2) \big ) = 0.  
\end{equation}

Similary, $\langle \Xi_4 \rangle_{0, d\mathfrak f} = - {6/d^2}$ 
follows from the computation 
\begin{eqnarray*} 
a_4 = \pi^*(0, 3) \cdot c_1(\mathcal O_W(1)) \cdot 
\big (-2 c_1(\mathcal O_W(1)) + \pi^*(2, 2) \big ) = -6
\end{eqnarray*}
by (\ref{Bsquare2}), where $W = \pi^{-1}(\Gamma \times \ell) 
\subset \Pee \subset \pms(n)$ for a fixed line $\ell \subset X$. 

To prove $\langle \Xi_5 \rangle_{0, d\mathfrak f} = 0$, 
choose the vector bundle $V_2 \in \ms(n-2)$ and the distinct points 
$x_1, x_2 \in X$ in Subsect.~\ref{subsect_Xi_5} such that 
$V_2|_{C_0}$ is stable and $x_1, x_2 \not \in C_0$. 
By Lemma~\ref{TandU}, $\Xi_5 \cap \big (\mathfrak T_{C_0}(n) \cup 
\mathfrak U_{C_0}(n) \big ) = \emptyset$. Hence
$\Xi_5 \cap (ev_1)(\Lambda) = \emptyset$
by Lemma~\ref{image_loci}~(i).
Since $\big [\overline {\frak M}_{0, 1}(\pms(n), d\mathfrak f) 
\big ]^{\text{vir}} \in H_{2(4n-6)}(\Lambda)$,  we get
\begin{equation}
  \langle \Xi_5 \rangle_{0, d\mathfrak f} \,\,
= \int_{\big [\overline {\frak M}_{0, 1}(\pms(n), 
    d\mathfrak f) \big ]^{\text{vir}}} \, ev_1^*\Xi_5  
= ev_{1*}\big [\overline {\frak M}_{0, 1}(\pms(n), d\mathfrak f) 
    \big ]^{\text{vir}}  \cdot \Xi_5  
= 0.
\end{equation}

Finally, to prove $\langle \Xi_6 \rangle_{0, d\mathfrak f} = 0$, 
choose the vector bundle $V_2 \in \ms(n-2)$ and the point $x \in X$ 
in Subsect.~\ref{subsect_Xi_6} such that $V_2|_{C_0}$ is stable and 
$x \not \in C_0$. Now our result follows from the same proof  
in the previous paragraph.
\end{proof}

\begin{remark}\label{rmk:push-vfc} 
Let $d \ge 1$ and $n \ge 3$. Using the Theorem above, one can show that
\begin{eqnarray*} 
ev_{1*}\big [\overline {\frak M}_{0, 1}(\pms(n), d\mathfrak f) 
\big ]^{\text{vir}}  \,\, = \,\, 1/d^2 \,\, \mathfrak T_{C_0}(n).
\end{eqnarray*}
\end{remark}


\begin{thebibliography}{ABCD}

\bibitem[Bea]{Bea} A. Beauville,
{\em Sur la cohomologie de certains espaces de modules de fibr\'es
vectoriels}, Geometry and Analysis (Bombay, 1992), 37-40,
Tata Inst. Fund. Res., Bombay, 1995.

\bibitem[Beh]{Beh} K. Behrend,
{\em Gromov-Witten invariants in algebraic geometry},
 Invent. Math. {\bf 127} (1997) 601-617.    

\bibitem[B-F]{B-F} K. Behrend, B. Fantechi,
{\em The intrinsic normal cone},
 Invent. Math. {\bf 128} (1997) 45-88. 



\bibitem[C-K]{C-K} D. Cox, S. Katz, 
{\em Mirror symmetry and algebraic geometry},
{\rm Mathematical Surveys and Monographs} {\bf 68}, 
Amer. Math. Soc., Providence, RI (1999). 


\bibitem[E-S]{E-S} G. Ellingsrud, S. Str\o mme,
{\em Towards the Chow ring of the Hilbert scheme of $\mathbb
P^2$}, J. reine angew. Math. {\bf 441} (1993), 33-44.


\bibitem[Ful]{Ful} W. Fulton,
{\em Intersection Theory}, Ergebnisse der Mathematik und 
ihrer Grenz\-gebiete 3. Folge {\bf 2}. Springer,  
Berlin Heidelberg  New York Tokyo, 1994.

\bibitem[F-P]{F-P} W. Fulton, R. Pandharipande, 
{\em Notes on stable maps and quantum cohomology}. 
Algebraic Geometry---Santa Cruz 1995, 45-96, 
Proc. Sympos. Pure Math. {\bf 62}, 
Amer. Math. Soc., Providence, RI (1997).

\bibitem[Get]{Get} E. Getzler,
{\em Intersection theory on $\overline M_{1,4}$ and 
elliptic Gromov-Witten invariants},
J. AMS {\bf 10} (1997) 973-998.


\bibitem[Gro]{Gro} A. Grothendieck,
{\em \'Etude Cohomologique des faisceaux coh\'erents}, EGA III, Publ. IHES, No. {\bf 17} (1963).


\bibitem[H-H]{H-H} A. Hirschowitz, K. Hulek,
{\em Complete families of stable vector bundles over $\mathbb P_2$}.
With an appendix by K. Hulek and S. A. Str\o mme. 
Lecture Notes in Math. {\bf 1194}, Complex Analysis and 
Algebraic Geometry (Gottingen, 1985), 19-40, Springer, Berlin, 1986. 

\bibitem[K-L]{K-L} Y. Kiem, J. Li, 
{\em Gromov-Witten invariants of varieties with holomorphic $2$-forms}.
Preprint.


\bibitem[LJ1]{LJ1} J. Li, 
{\em Algebraic geometric interpretation of Donaldson's 
polynomial invariants}, J. Differ. Geom. {\bf 37} (1993), 
417-466. 

\bibitem[LJ2]{LJ2} J. Li,
{\em Kodaira dimension of moduli space of vector
bundles on surfaces}, Invent. Math. {\bf 115} (1994), 1-40.

\bibitem[L-L]{L-L} J. Li, W.-P. Li,
{\em Two point extremal Gromov-Witten invariants of Hilbert
schemes of points on surfaces}. Preprint.

\bibitem[LT1]{LT1} J. Li, G. Tian, 
{\em Virtual moduli cycles and Gromov-Witten invariants of 
algebraic varieties}, J. A.M.S. {\bf 11} (1998) 19-174.

\bibitem[LT2]{LT2} J. Li, G. Tian, 
{\em Virtual moduli cycles and Gromov-Witten invariants of 
general symplectic manifolds}, Topics in symplectic 
$4$-manifolds (Irvine, CA, 1996), 
First Int. Press Lect. Ser., I, Internat.
Press, Cambridge, MA, (1998) 47--83.

\bibitem[L-Q]{L-Q} W.-P. Li, Z. Qin,
{\em On $1$-point Gromov-Witten invariants of the Hilbert 
schemes of points on surfaces}. Proceedings of 
8th G\" okova Geometry-Topology Conference (2001).
Turkish J. Math. {\bf 26} (2002), 53-68.

\bibitem[Mar]{Mar} E. Markman,
{\em Integral generators for the cohomology ring of moduli 
spaces of sheaves over Poisson surfaces}. 
Adv. Math. {\bf 208} (2007), 622-646.




\bibitem[Mor]{Mor} J.W. Morgan, 
{\em Comparison of the Donaldson polynomial
invariants  with their alge\-bro-geo\-metric analogues}, 
Topology {\bf 32} (1993), 449-488.




\bibitem[Q-Z]{Q-Z} Z. Qin, Q. Zhang, 
{\em On the crepancy of the Gieseker-Uhlenbeck morphism}. 
Asian J. Math. {\bf 12} (2008), 213-224.

\bibitem[Str]{Str} S.A. Str\o mme, 
{\em Ample divisors on fine moduli spaces on the projective plane},
Math. Z. {\bf 187} (1984), 405-423.

\bibitem[Uhl]{Uhl} K. Uhlenbeck, 
{\em Removable singularity in Yang-Mills fields}, 
Comm. Math. Phys. {\bf 83} (1982), 11-29.

\bibitem[Yos]{Yos} K. Yoshioka, 
{\em The Betti numbers of the moduli space of stable sheaves 
of rank $2$ on $\Bbb P^2$}, J. reine angew. Math. 
{\bf 453} (1994), 193-220.

\end{thebibliography}
\end{document}